\documentclass[12pt,a4paper,english]{scrartcl}

\usepackage{lmodern}
\usepackage{babel,csquotes,xpatch}
\usepackage[
    backend=biber,
    style=alphabetic,
    backref=true
]{biblatex}
\usepackage{enumitem}
\setenumerate{label=\textup{(\roman*)}}

\usepackage{amsmath, mathtools}
\usepackage{amssymb}
\usepackage{amsthm, thmtools}
\usepackage{tikz-cd}
\usepackage{xcolor}
\usepackage{braket}
\usepackage{bm, bbm}
\usepackage{enumitem}

\usepackage{etoolbox}

\usepackage[colorlinks]{hyperref}
\usepackage[nameinlink]{cleveref}

\makeatletter
\renewcommand*\author[1]{%
  \stepcounter{author}%
  \ifnum\c@author=1
    \gdef\@author{#1}%
  \else
    \xdef\@author{\unexpanded\expandafter{\@author\and#1}}%
  \fi
  \csgdef{author@\the\c@author}{#1}}
\newcommand*\email[1]{%
  \csgdef{email@\the\c@author}{#1}}
\newcommand*\address[1]{%
  \csgdef{address@\the\c@author}{#1}}
\AtEndDocument{%
  \xdef\author@count{\the\c@author}%
  \c@author=1
  \print@authors}
\newcommand*\print@authors{%
  \ifnum\c@author>\author@count
  \else
    \print@author{\the\c@author}%
    \advance\c@author by 1
    \expandafter\print@authors
  \fi}
\newcommand*\print@author[1]{%
  \par\medskip
  \begin{tabular}{@{}l@{}}%
    \textsc{Addresses of \csuse{author@#1}}\\
    \csuse{address@#1}\\
    \textit{E-mail address}:
    \href{mailto:\csuse{email@#1}}{\texttt{\csuse{email@#1}}}
  \end{tabular}}
\makeatother

\declaretheorem[numberwithin=section]{theorem}
\declaretheorem[numberlike=theorem]{lemma, corollary,proposition}
\declaretheorem[numberlike=theorem,style=definition]{definition}
\declaretheorem[numberlike=theorem,style=remark,qed=$\diamondsuit$]{example}
\declaretheorem[numberlike=theorem,style=remark]{remark}

\declaretheoremstyle[notefont=\bfseries, notebraces={(}{)}]{conjecture}
\declaretheorem[numberlike=theorem,style=conjecture,refname={conjecture,conjectures},Refname={Conjecture,Conjectures}]{conjecture}

\declaretheorem[name=Theorem]{maintheorem}

\newcommand{\NN}{\mathbb{N}}
\newcommand{\ZZ}{\mathbb{Z}}
\newcommand{\QQ}{\mathbb{Q}}
\newcommand{\RR}{\mathbb{R}}
\newcommand{\CC}{\mathbb{C}}
\newcommand{\KK}{\mathbb{K}}
\newcommand{\PP}{\mathbb{P}}
\newcommand{\TT}{\mathbb{T}}

\DeclareMathOperator{\codim}{codim}
\DeclareMathOperator{\pos}{Pos}
\DeclareMathOperator{\weight}{wt}

\newcommand{\reg}{\mathrm{reg}}

\DeclareMathOperator{\rank}{rk}

\DeclareMathOperator{\Matroid}{M}

\DeclareMathOperator{\PGr}{\mathbb{G}}

\DeclareMathOperator{\trop}{trop}

\DeclareMathOperator{\Con}{Con}
\renewcommand{\Im}{\operatorname{Im}}
\DeclareMathOperator{\DD}{DD}
\DeclareMathOperator{\DDgen}{DD_{gen}}
\newcommand{\ChowGrp}{\mathrm{A}}

\title{Polar Degrees of Matroids}
\author{Clara Briand}
\address{École Normale Supérieure, France}
\email{clara.briand@ens.psl.eu}

\author{Leonie Kayser}
\address{Max Planck Institute for Mathematics in the Sciences, Germany}
\email{leo.kayser@mis.mpg.de}

\author{Julian Weigert}
\address{Max Planck Institute for Mathematics in the Sciences, Leipzig\\and Georg-August-Universität Göttingen, Germany}
\email{julian.weigert@mis.mpg.de}

\date{\today}

\begin{document}

\maketitle

\begin{abstract}
We show that the polar degrees of the coordinate-wise inverse of a linear subspace $L \subseteq \PP^n$ are given by the coefficients of a substitution of the reduced characteristic polynomial of the associated matroid $\Matroid(L)$. Our proof connects the geometry of conormal varieties of reciprocal linear spaces to the combinatorial conormal fan of $\Matroid(L)$. As a corollary, we settle two open conjectures regarding matroid discriminants.
\end{abstract}

\tableofcontents

\section{Introduction}

Let $L\subseteq \PP^n$ be a linear subspace of dimension $d$ not contained in any coordinate hyperplane and consider the rational Cremona map
\[
\PP^n \dashrightarrow\PP^n, \qquad [x_0:\dots:x_n] \mapsto [x_0^{-1}:\dots:x_n^{-1}]
\]
The closure of the image of $L$ under this map is denoted $L^{-1}$ and is called the \emph{reciprocal linear space} associated to $L$. This a projective variety whose geometric properties are closely related to the realisable matroid $M$ represented by $L$. Reciprocal linear spaces have been widely studied both from the viewpoint of maximum likelihood estimation in algebraic statistics (see e.g. \cite{HuhSturmfels2014}) and from the viewpoint of matroid theory (see e.g. \cite{orlik1992arrangements,HuhKatz2012,Proudfoot2006}). In this article we study $L^{-1}$ through the lens of projective duality. The central object in our work is the conormal variety $\Con(L^{-1})\subseteq \PP^n\times (\PP^n)^*$ whose general points are pairs $(p,H)$ where $p\in L^{-1}$ is a smooth point and $H\in (\PP^n)^*$ is a hyperplanes tangent to $L^{-1}$ at $p$. The projectively dual variety $(L^{-1})^\vee$ of $L^{-1}$ is the image of $\Con(L^{-1})$ under the projection to the second factor $(\PP^n)^*$.

Our initial motivation for studying these varieties comes from a collection of recent conjectures by Matsubara-Heo and Telen in their work on principal matroid determinants \cite{MatsubaraHeoTelen2026}. In their paper they introduce an analogue of the theory of principal $A$-determinants for toric varieties by Gelfand, Kapranov and Zelevinsky \cite{GKZ} to the  setting of reciprocal linear spaces, thereby defining the principal matroid determinant $ E_L$. Their idea is that the stratification of toric varieties into orbits  is replaced by the stratification of reciprocal linear spaces into strata corresponding to flats of the underlying matroid $M\coloneq \Matroid(L)$. The main theorem \cite[Theorem 1.2]{MatsubaraHeoTelen2026} shows how the principal matroid determinant decomposes into a union of dual varieties of reciprocal linear spaces of minors of $M$. 
The following three conjectures about these factors and their multiplicities in $E_L$ were left open.

\begin{conjecture}[{\cite[Conjecture 7.2]{MatsubaraHeoTelen2026}}]
\label{con:dualdef}
The reciprocal linear space $L^{-1}$ is dual defective, i.e.\ $\dim((L^{-1})^\vee)<n-1$,
if and only if the matroid $M$ is not connected.
\end{conjecture}
\begin{conjecture}[{\cite[Conjecture 7.3]{MatsubaraHeoTelen2026}}]
\label{con:dualdeg}
If the matroid $M$ is connected, then $(L^{-1})^\vee$ is a hypersurface of degree $2^d\beta(M)$, where $\beta(M)$ is the beta invariant of $M$.
\end{conjecture}
\begin{conjecture}[{\cite[Conjecture 7.1]{MatsubaraHeoTelen2026}}]
\label{con:multies}
    For each flat $F$ of $M$, such that $(L|_F^{-1})^\vee$ is a hypersurface, the defining polynomial of that hypersurface appears in the factorization of $ E_L$ with exponent equal to the local multiplicity of $L^{-2}$ along the stratum indexed by $F$. Here $L^{-2}$ denotes the image of $L^{-1}$ under the coordinate-wise squaring map.
\end{conjecture}

In this article we shall prove and strengthen the first two conjectures by describing the class of $\Con(L^{-1})$ in the Chow ring of $\PP^n\times (\PP^n)^*$, or equivalently, the sequence of polar degrees of $L^{-1}$. Indeed, the dual degree is the last nonzero polar degree and we can read off dual defectiveness from the number of nonzero polar degrees. This motivates our first main theorem, which provides a combinatorial formula for these polar degrees.

To state it, let $\overline{\chi}_{M}(t)\in \ZZ[t]$ denote the reduced characteristic polynomial of the matroid $M$. We also write $\mu_i(L^{-1})$ for the $i$-th polar degree of $L^{-1}$, where the indexing is chosen such that $\mu_0(L^{-1})=\deg(L^{-1})$. Using this notation we show the following.

\begin{maintheorem}\label{thm:intro:polardegrees}
Let $L\subseteq \PP^n$ be a linear subspace of dimension $0\leq d<n$ that is not contained in any coordinate hyperplane. Then
\[
\sum_{j = 0}^d \mu_j(L^{-1})t^j = (-2t-1)^d\, \overline{\chi}_M\left(\frac{2t}{2t+1}\right).
\]
Equivalently, setting $\delta_i(L^{-1}) \coloneq \mu_{d-i}(L^{-1})$ we get
\[
\sum_{i = 0}^d \delta_i(L^{-1})t^i = (-t-2)^{d} \, \overline{\chi}_M\left(\frac{2}{t+2}\right).
\]
\end{maintheorem}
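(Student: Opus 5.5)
We compute the bidegrees of the conormal variety $\Con(L^{-1})\subseteq \PP^n\times(\PP^n)^*$ by moving it into a setting where the combinatorics of $M$ is visible. Since $L^{-1}$ meets the torus $T\subseteq\PP^n$ in a dense open subset, which is the image of $L\cap T$ under inversion, we first describe the conormal variety over the torus explicitly. For $x\in L\cap T$, a hyperplane $y$ is tangent to $L^{-1}$ at $x^{-1}$ exactly when $y$ annihilates the image of $T_xL$ under the differential of inversion. That differential is $v\mapsto -x^{-2}v$ coordinatewise, so the condition reads $x^{-2}\cdot y\in L^\perp$. Hence the conormal variety over the torus is the image of the incidence
\[
\{(x,w): x\in L\cap T,\ w\in L^\perp\cap T\}
\]
under the monomial map $(x,w)\mapsto (x^{-1},\,x^{2}w)$. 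In other words, $\Con(L^{-1})\cap(T\times T^*)$ is the image of the torus part of the product $L\times L^\perp$ under a linear map of tori with matrix $\begin{pmatrix}-1&0\\2&1\end{pmatrix}$. This is where the factors $2$ in the statement come from.

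Next we tropicalize. The tropicalization of $L\times L^\perp$ is the product of Bergman fans $\Sigma_M\times\Sigma_{M^\perp}$. By the geometric/combinatorial comparison with the conormal fan of $M$ (Ardila--Denham--Huh), we can replace this product by the conormal fan $\Sigma_{M,M^\perp}$, which has the same support and weights and is therefore the same tropical cycle. The tropicalization of $\Con(L^{-1})$ is then the pushforward of this cycle under the linear map $\phi(u,v)=(-u,\,2u+v)$. Since the monomial map is a finite morphism of tori of degree one onto its image, this holds with multiplicities. Finally, the bidegrees $\mu_j$ are the intersection numbers of $\trop\Con(L^{-1})$ with $H_1^{\,d-j}H_2^{\,n-1-d+j}$, where $H_1,H_2$ are the tropical hyperplane classes on the two factors. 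Because $\Con(L^{-1})$ is the closure of its torus part and tropical intersection numbers compute degrees, these tropical intersection numbers equal the polar degrees.

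By the projection formula, the $\mu_j$ become intersection numbers of $\phi^*H_1$ and $\phi^*H_2$ on $\Sigma_{M,M^\perp}$. Under pullback, $H_1$ (defined by the tropical polynomial $\max_i(-u_i)$) becomes the class $\gamma$ on the first factor, up to the sign convention $\alpha\leftrightarrow$ the reversed generator, and $H_2$ becomes a mixture. The key step is to show that $\phi^*H_2$ is rationally equivalent to $2\gamma+\delta$, where $\delta$ is the class from the $M^\perp$ factor. We would check this by comparing piecewise-linear functions on $\Sigma_{M,M^\perp}$: the difference of the function $\max_i(2u_i+v_i)$ and twice the first-factor function plus the second-factor function is linear modulo the lineality space, using the normal-complex description. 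We expect this to be the main obstacle, because $\max(2u_i+v_i)$ is not a sum of maxima and the difference is linear only on cones of the conormal fan, not on the product fan. This is precisely why the conormal fan is needed.

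With that identity in hand, the polar degrees are
\[
\mu_j=\deg\bigl(\gamma^{\,d-j}(2\gamma+\delta)^{\,n-1-d+j}\bigr)
\]
on $\Sigma_{M,M^\perp}$. Expanding binomially and using the Ardila--Denham--Huh formula $\deg(\gamma^{a}\delta^{b})=$ (coefficients of $\overline\chi_M$, up to sign) then gives a convolution of $\overline\chi_M$ coefficients with powers of $2$. A direct generating-function manipulation rewrites this convolution as $(-2t-1)^d\,\overline\chi_M\bigl(2t/(2t+1)\bigr)$, and the $\delta_i$ version follows by substituting $t\mapsto 1/t$ and multiplying by $t^d$. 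As a sanity check, we would verify the case $L=\PP^n$'s hyperplane, where $L^{-1}$ is a degree-$n$ hypersurface.
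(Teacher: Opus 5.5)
\textbf{There is a genuine gap: the key identity $\phi^*H_2\sim 2\gamma+\delta$ is false.} Your first half is sound, and slicker than the paper's detour. The map $(x,w)\mapsto(x^{-1},x^2w)$ is a unimodular automorphism of $\TT^n\times\TT^n$. Hence $\trop\Con(L^{-1})$ is the image of $\Sigma_M\times\Sigma_{M^\perp}$ (all weights $1$) under $(u,v)\mapsto(-u,2u+v)$, and the $\mu_j$ are tropical intersection numbers with the pulled-back hyperplane functions.

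The break comes at your key step. The function $\phi^*H_2$, i.e.\ the hyperplane function evaluated at $2u+v$, is not linear on the cones of $\Sigma_{M,M^\perp}$. So it is not a class in $A^\bullet(\Sigma_{M,M^\perp})$, and no identity $\phi^*H_2\sim 2\gamma+\delta$ can hold there. Concretely, let $M$ have rank $3$ on $E=\{0,\dots,4\}$ with $\{0,1,2\}$ a circuit. Then $\{0,1,2\}|\{3,4\}$ and $E|\{3,4\}$ are biflats spanning a cone of $\Sigma_{M,M^\perp}$. At $a_1(e_{\{0,1,2\}}+f_{\{3,4\}})+a_2(e_E+f_{\{3,4\}})$ one gets $2u+v\equiv(2a_1,2a_1,2a_1,a_1+a_2,a_1+a_2)$, whose minimizing (and maximizing) coordinates jump across $a_1=a_2$.

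The formula also fails numerically. For a generic line $L\subseteq\PP^2$, $L^{-1}$ is a conic with $\mu_0=\mu_1=2$. The conormal fan is the three rays $\{i\}|E$, and on it $\phi^*H_1$ and $\phi^*H_2$ both have degree $2$. Your formula gives $\mu_0=\deg\gamma$, forcing $\deg\gamma=2$, and then $\mu_1=2\deg\gamma+\deg\delta\geq 4$ for either reading of $\delta$. (Its degree is $0$ for the $M^\perp$-factor class, since $\Sigma_{M^\perp}$ is a point, and $1$ for ADH's $\delta$.) On rays, $\phi^*H_2$ in fact agrees with $\delta$ plus the pullback of the hyperplane class along $(u,v)\mapsto u$, with coefficient $1$: the $2$ never enters additively.

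\textbf{There is also a second conflation.} $\phi^*H_1$ is the hyperplane class pulled back through the Cremona map, i.e.\ the paper's $\widetilde{\gamma}=\sum_{i\notin F}x_{F|G}$, not the Ardila--Denham--Huh class $\gamma$, and no sign convention relates them. Indeed $\int_{\Sigma_{M,M^\perp}}\widetilde{\gamma}^{d}\delta^{n-1-d}=\mu^+(M)=\deg L^{-1}$, while $\int_{\Sigma_{M,M^\perp}}\gamma^{d}\delta^{n-1-d}=1=\deg L$. So the ADH formula for $\deg(\gamma^a\delta^b)$ does not produce the numbers you need. Two ideas are missing.
\begin{enumerate}
\item Remove the $2$ multiplicatively, not additively. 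The paper uses the squaring maps and the weight rescaling of $\trop(L^2)$ to get $\delta_k(L^{-1})=2^{d-k}\int[\overline{\phi(L\times L^\perp)}]h_1^kh_2^{n-k-1}$ with $\phi(x,y)=(x^{-1},xy)$. The tropicalization $(u,v)\mapsto(-u,u+v)$ of this $\phi$ is a morphism from the bipermutohedral fan, so the pullbacks are honest classes $\widetilde{\gamma},\delta$. In your setup the same step is easy: factor through $D(u,v)=(2u,v)$, which multiplies the product cycle by $2^d$ and halves the first-factor function.
\item Actually compute $\int\widetilde{\gamma}^k\delta^{n-k-1}$. The paper expands $\widetilde{\gamma}^k$ square-free over biflags with decreasing first flag. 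It then uses ADH's $\int x_{\mathcal{F}|\mathcal{F}^\perp}\delta^{n-k-1}=\beta(M[\mathcal{F}])$. Finally a recursion plus M\"obius inversion sums $\sum_{\mathcal{F}}\beta(M[\mathcal{F}])t^{|\mathcal{F}|}$ to $(-t-1)^d\overline{\chi}_M\bigl(\tfrac{1}{t+1}\bigr)$, and $t\mapsto t/2$ gives the theorem.
\end{enumerate}
You also need to treat coloops separately. If $M$ has a coloop then $L^\perp$ lies in a coordinate hyperplane, so $\Con(L^{-1})$ misses $\TT^n\times\TT^n$ and your torus-based description is empty. The paper handles this by reducing via the join lemma.
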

\begin{corollary}\label{cor:introconjs}
    \Cref{con:dualdef,con:dualdeg} hold true.
\end{corollary}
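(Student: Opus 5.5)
This corollary follows from \Cref{thm:intro:polardegrees} by reading off the top and bottom coefficients of the polar-degree polynomial. Recall two standard facts about polar degrees. First, $(L^{-1})^\vee$ is a hypersurface if and only if the last polar degree $\mu_d(L^{-1})$ is nonzero. Second, when this holds, $\deg (L^{-1})^\vee=\mu_d(L^{-1})$. More generally, the dual defect equals $\min\{i : \mu_{d-i}(L^{-1})\neq 0\}$, which is the smallest $i$ with $\delta_i\neq 0$. So both conjectures reduce to computing
\[
\mu_d(L^{-1})=\delta_0(L^{-1}) = \Big[(-t-2)^d\,\overline{\chi}_M\big(\tfrac{2}{t+2}\big)\Big]_{t=0}.
\]

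Evaluating at $t=0$ gives $\delta_0=(-2)^d\,\overline{\chi}_M(1)$. Since the rank of $M$ is $r=d+1$, the reduced characteristic polynomial $\overline\chi_M(q)=\chi_M(q)/(q-1)$ has degree $d$. I will use the standard identity
\[
\beta(M)=(-1)^{r-1}\chi_M'(1)=(-1)^{r-1}\,\overline{\chi}_M(1) = (-1)^d\,\overline{\chi}_M(1).
\]
The middle equality holds because $\chi_M(q)=(q-1)\overline\chi_M(q)$, so differentiating and setting $q=1$ gives $\chi_M'(1)=\overline\chi_M(1)$. Hence $\delta_0(L^{-1})=(-2)^d(-1)^d\beta(M)=2^d\beta(M)$.

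Crapo's theorem says that for a loopless matroid on at least two elements, $\beta(M)\neq 0$ exactly when $M$ is connected. The hypotheses $d<n$ and $L$ not in any coordinate hyperplane ensure $M$ is loopless with $n+1\ge 2$ elements. Combining this with the previous step: $\mu_d\neq 0$ if and only if $M$ is connected, which is \Cref{con:dualdef}. In that case $\deg (L^{-1})^\vee=\mu_d=2^d\beta(M)$, which is \Cref{con:dualdeg}.

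The only delicate point is confirming the polar-degree conventions, namely that $\mu_d$ is the degree of the projection of $\Con(L^{-1})$ to $(\PP^n)^*$ counted with the right multiplicity. The projection $\Con(X)\to X^\vee$ is birational by biduality in characteristic zero, so its degree equals $\deg X^\vee$ exactly when $X^\vee$ is a hypersurface and is zero otherwise. I also need to check that the sign and normalisation conventions for $\overline\chi_M$ in the paper match the identity $\beta(M)=(-1)^d\overline\chi_M(1)$. Beyond that, everything is routine once \Cref{thm:intro:polardegrees} is granted.
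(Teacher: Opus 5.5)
Your proposal is correct and matches the paper's proof: you evaluate the $\delta$-generating function of \Cref{thm:intro:polardegrees} at $t=0$ to get $\delta_0(L^{-1})=(-2)^d\overline{\chi}_M(1)=2^d\beta(M)$, then use that $\delta_0$ vanishes exactly in the dual-defective case and equals $\deg X^\vee$ otherwise, together with $\beta(M)\neq 0 \iff M$ connected on a ground set of size at least two. One wording fix: $\Con(X)\to X^\vee$ is birational only when $X^\vee$ is a hypersurface; in the defective case its fibers are positive-dimensional, which is why the pushforward, and hence $\delta_0$, vanishes.
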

From the viewpoint of the principal matroid determinant $E_L$ this describes which factors really appear and what there degrees are in the factorization of $E_L$ due to \cite[Theorem 1.2]{MatsubaraHeoTelen2026}. The remaining \Cref{con:multies} concerns the multiplicities of these factors within $E_L$ and hence would finish the complete description of the factorization of $E_L$. While \Cref{con:multies} is left open in this article, its resolution is the topic of the future work \cite{briand2025}.

Especially in recent years, expressing combinatorial invariants as geometric and tropical intersection numbers turned out to be a powerful tool for resolving longstanding conjectures in matroid theory \cite{adiprasito2018hodge,Ardila2022,braden2020singular}. The theorem above can also be seen as an instance of this machinery as it expresses the coefficients of a substitution of $\overline{\chi}_M$ as intersection numbers on the conormal variety. Following this theme, \Cref{thm:intro:polardegrees} could be used to show the log-concavity of the coefficients of $(-2t-1)^d\overline{\chi}_M\left(\frac{2t}{2t+1}\right)$ for realizable matroids. We note however that this log-concavity follows more directly from the log-concavity of the coefficients of $\overline{\chi}_M$ which was proven in the groundbreaking work of Adiprasito, Huh and Katz for all matroids \cite{adiprasito2018hodge}. \par
Our proof of \Cref{thm:intro:polardegrees} uses a rational parametrization of $\Con(L^{-1})$ described in \cite{MatsubaraHeoTelen2026}, akin to the Horn--Kapranov uniformization.  We resolve the base locus of this map by passing from a product of projective spaces to the \emph{bipermutohedral variety}. The latter is a smooth projective toric variety whose corresponding fan hosts the \emph{conormal fan} of every matroid, introduced in \cite{Ardila2022}. We prove \Cref{thm:intro:polardegrees} in three steps that can be roughly described as follows: \begin{itemize}
    \item Using tropical methods we pass from the rational parametrization of $\Con(L^{-1})$ to a similar, but slightly simpler map with a different image while tracking how this affects polar degrees (\Cref{sec:swimming}).
    \item We show that these new degrees can be computed as intersection numbers on the bipermutohedral variety that involve the class of the conormal fan of $M$. From this we derive a first combinatorial formula which sums over all decreasing flags of flats of $M$ (\Cref{sec:cycling}).
    \item Finally we extract a recursion for the complicated combinatorial formula obtained in the last step to relate the polar degrees to 
    a substitution of the reduced characteristic polynomial $\overline{\chi}_{M}$ (\Cref{sec:running}).
\end{itemize}

The structure of this article is as follows. \Cref{sec:warmup} serves as an introduction to the tools that we will use in proving \Cref{thm:intro:polardegrees}. We also provide direct proofs of \Cref{con:dualdef,con:dualdeg}. Especially the latter is meant to showcase our technique in a slightly simpler setting before we apply it to the conormal variety. The detailed proof of \Cref{thm:intro:polardegrees} is then presented in \Cref{sec:triathlon}, which is subdivided further into the three parts mentioned above. We follow up on this by stating several applications of \Cref{thm:intro:polardegrees} in \Cref{sec:applications}. This section also contains examples and a generalization of the theorem to arbitrary negative powers of linear spaces $L^{-k}$, $k\geq 1$.

\subsection*{Acknowledgements}
We would like to thank Simon Telen for proposing this project. C.B. thanks the Max Planck Institute for Mathematics in the Sciences for its hospitality and the Fondation de l'ENS for its financial support. J.W. was supported by the SPP 2458 “Combinatorial Synergies”,
funded by the DFG grant 539677510.

\section{Preliminaries and dual degree} \label{sec:warmup}

Throughout this article we work over an algebraically closed field $\KK$ of characteristic 0.

\subsection{Matroids}

In this section we collect the required combinatorial preliminaries. Namely, we recall how to associate a matroid to a linear space, as well as some important properties of matroids. This construction is what allows us to study the geometric properties of reciprocal linear spaces from a combinatorial point of view.
For a thorough introduction to matroids, we refer to \cite{oxley2006matroid}.  

We start by recalling the definition of a matroid.

\begin{definition}\label{def: matroid}
A \emph{matroid} is a pair $M = (E,\mathcal{B})$ where $E$ is a finite set, called the ground set of $M$, and $\mathcal{B}$ is a nonempty family of subsets of $E$, called the \emph{bases} of $M$, such that the basis exchange axiom holds:
\[
\forall B_1,B_2\in \mathcal{B}:\forall e_1\in B_1\setminus B_2 : \exists e_2\in B_2\setminus B_1: (B_1\setminus e_1)\cup \{e_2\}\in \mathcal{B}.
\]
\end{definition}
We say that a subset $S\subseteq E$ is \emph{independent} if it is contained in a basis, otherwise we call $S$ \emph{dependent}. A \emph{circuit} of $M$ is an independent subset $S$ which is inclusion-minimal among all independent subsets. We further denote the \emph{rank} of any subset $S\subseteq E$ by $\rank_M(S)\coloneq \max_{B\in \mathcal{B}}|S\cap B|$. For $S=E$ we also simply write $\rank(M)\coloneq \rank_M(E)$, which is the cardinality of any basis of $M$. By $\text{cl}_M(S)\coloneq\set{x\in E| \rank_M(S\cup\{e\})=\rank_M(S)}$ we denote the \emph{closure} of $S$ in $M$. A subset $F\subseteq E$ is called a \emph{flat} of $M$ if $F=\text{cl}_M(F)$. The groundset $E$ is always a flat and $\text{cl}_M(\emptyset)$ is unique flat of rank zero. Its elements are called \emph{loops}. The beauty of matroid theory lies in the fact that most of the objects defined in this paragraph (bases, independent sets, circuits, rank function, closure, \dots) can be used to give \enquote{cryptomorphic definitions} of matroids.

Below we also work with \emph{flags of flats}, these are collections $\mathcal{F}$ of flats which are pairwise comparable under inclusion:
\begin{align*}
    \mathcal{F}=\set{F_1\subseteq \dots \subseteq F_k}.
\end{align*}
The size of such a collection is called the \emph{length} of $\mathcal{F}$ and denoted by $|\mathcal{F|}\coloneq k$. When working with loopless matroids we usually assume the elements of a flag of flats to be non-empty.

As the terminology suggests, matroids are inspired by linear independence in vector spaces. We now explain how to associate a matroid $\Matroid(L)$ to a linear subspace $L\subseteq \PP(\KK^E)$, $E \coloneq \{0,\dots,n\}$.
Pick a matrix $A \in \text{Mat}_{d+1,n+1}(\KK)$ such that $d=\dim L$ and its projectivized row span is $L$, and label the columns of $A$ by the elements of $E$. We declare that a subset of $E$ is independent if and only if the column vectors it indexes are linearly independent in $\KK^d$. In particular the bases of $M$ correspond to non-vanishing maximal minors of $A$.
This construction is independent of the choice of the matrix $A$. Indeed, if $A'\in \text{Mat}_{d+1,n+1}(\KK)$ also has row span $L$, then there exists $B \in GL_{d+1}(\KK)$ such that $A' = BA$. 


Operations on vector spaces also have matroid analogues. In this paper, we will mostly be concerned with the following operation.
Let $\widehat{L} \subset \KK^{n+1}$ denote the affine cone over $L$, and $L^\perp \coloneq \PP(\widehat{L}^\perp) \subseteq (\PP^n)^*$ the orthogonal complement of $L$. The matroid $M^\perp \coloneq \Matroid(L^\perp)$ is called the \emph{dual matroid} of $M$. More generally, for any matroid $M$, its dual $M^\perp$ is defined as a matroid with the same ground set as $M$, whose bases are the complements of the bases of $M$.

\begin{example}
\label{ex: matroidex}
Let $L_{\text{ex}}\subseteq \PP^6$ be the projectivized rowspan of the matrix $A_{\text{ex}}$ given by\begin{align*}
    A_{\text{ex}}  =
\begin{bmatrix}
0 & -1 & 1 & 0 & 0 & 1 & 0 \\
0 & 0 & -1 & 1 & 0 & 0 & 1 \\
1 & 0 & 0 & -1 & -1 & 0 & 0 \\
-1 & 1 & 0 & 0 & 0 & 0 & 0 \\
\end{bmatrix}.
\end{align*}
The matroid $M_{\text{ex}} = \Matroid(L_{\text{ex}})$ has ground set $E_{\text{ex}} = \{0,1,2,3,4,5,6\}$. It has 86 independent sets, 24 bases, 38 flats (illustrated in \Cref{fig:LatticeOfMex}), and 7 circuits (namely, $\{0, 1, 2, 3\}, \{0, 1, 4, 5\}, \{2, 3, 4, 5\},\{0, 1, 2, 4, 6\}, \{3, 4, 6\}, \{2, 5, 6\}, \{0, 1, 3, 5, 6\}$).
\end{example}

\begin{figure}
\centering
\includegraphics[width=\textwidth]{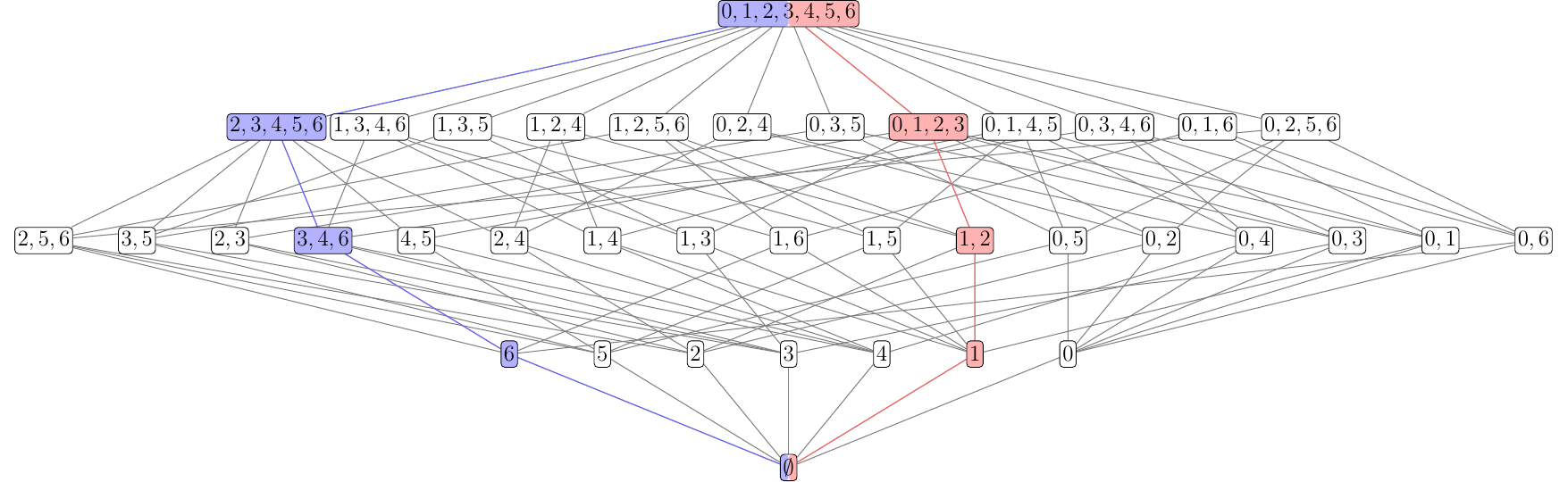}
\caption{Lattice of flats of $M_{\text{ex}}$. Highlighted are a flag of flats (\textcolor{red!60}{red}) and a \emph{decreasing} flag of flats (\textcolor{blue!60}{blue}), see \Cref{def:decreasing flag}.}
\label{fig:LatticeOfMex}
\end{figure}

We frequently use the (reduced) characteristic polynomial, the beta invariant and the unsigned Möbius invariant of the matroid $M$. For completeness we state the definition of these classical matroid invariants. For more details we refer to \cite{WhiteCombGeo1987}.
\begin{definition}
Let $M$ be a matroid of rank $d+1$ on $E$. The \emph{Möbius function} $\mu$ of $M$ assigns to every flat $F\subseteq E$ of $M$ an integer as follows. If $M$ has a loop, then $\mu$ is identically zero, if $M$ is loopless, then
\[
    \mu(F)=\begin{cases}
       1 &\text{if } F=\emptyset, \\
       -\sum_{\substack{G\text{ flat}\\G\subsetneq F}}\mu(G) & \text{otherwise.}
    \end{cases}
\]
The \emph{characteristic polynomial} of $M$ is the univariate polynomial $\chi_M(t)$ defined as
\[
\chi_M(t) \coloneq \sum_{F\text{ flat}}\mu(F)t^{d+1-\rank_M(F)} = \sum_{S \subseteq E} (-1)^{|S|}t^{d+1-\rank_M(S)}.
\]
For $M\neq \emptyset$, this polynomial has a root at $t=1$, so we also define the \emph{reduced characteristic polynomial}
\[
\overline{\chi}_M(t)\coloneq\frac{\chi_M(t)}{t-1}.
\]
The \emph{beta invariant} and the \emph{(unsigned) Möbius invariant} of $M$ are defined as
\begin{align*}
\beta(M)&\coloneq (-1)^d \overline{\chi}_M(1) \geq 0 \\
\mu^+(M)&\coloneq(-1)^{d+1}\mu(E) = |\chi_M(0)|=(-1)^d\overline{\chi}_M(0) \geq 0. \qedhere
\end{align*}
\end{definition}

\begin{example}
    \label{ex: matroidexcontinued}
    Let $M=M_{\text{ex}}$ be the matroid defined in \Cref{ex: matroidex}. We have \begin{align*}
        \chi_M(t)&=t^4-7t^3+19t^2-23t+10 \\
    \overline{\chi}_M(t)&=t^3-6t^2+13t-10\\
        \beta(M)&=2  \qquad \qquad \qquad        \mu^+(M)=10 \qedhere
    \end{align*}
\end{example}

The Möbius invariant vanishes if and only if $M$ has a loop, while the beta invariant vanishes if and only if $M$ is disconnected (or empty or a single loop). Here, a matroid is \emph{disconnected} if there exists a partition $E = E_1 \sqcup E_2$ such that $M = M_1 \oplus M_2$, where $M_i$ is a matroid on the ground set $E_i$ and the bases of $M$ are unions of bases from $M_1$ and $M_2$. Otherwise $M$ is called \emph{connected}. The set of such connected components of $M$ is well-defined, we denote the number of connected components of $M$ by $c(M)$.

In the case $M = \Matroid(L)$, $M$ is disconnected if and only if there exists a partition $E = E_1 \sqcup E_2$ as above such that
\[
\widehat{L} = (\widehat{L} \cap \KK^{E_1}) \oplus (\widehat{L}\cap \KK^{E_2}) \eqcolon \widehat{L}_1 \oplus \widehat{L}_2 \subseteq \KK^E,
\]
and $\Matroid(L) = \Matroid(L_1) \oplus \Matroid(L_2)$.

It turns out that many of the algebro-geometric properties of $L^{-1}$ are determined by the matroid $M=\Matroid(L)$, for instance $\deg L^{-1} = \mu^+(M)$ (see for example the discussion in \Cref{sec:degreeexamples}), or $L^{-1}$ is a cone over a smaller reciprocal linear space if $M$ has a coloop.

\subsection{Polar degrees and conormal varieties}

We now switch gears and recall the fundamentals of projective duality, the most important of which are the definition of conormal variety and of polar degrees. For more details on projective duality we refer to \cite{GKZ},  for background on Chow groups and (multi)degrees see \cite{Fulton1998}.

We start by defining polar classes. Let $X \subset \PP^n$ be a projective variety of dimension $d$. Denote by $X_\reg$ the dense open set of regular points of $X$. Let $\Lambda \subseteq \PP^n$ be a linear subspace and consider the \emph{polar variety}
\[
P_\Lambda(X) \coloneq \overline{\Set{x\in X_\reg | \dim(T_xX \cap \Lambda) > \dim \Lambda - n + d}} \subseteq X
\]
For general $(n-d+k-2)$-dimensional subspaces $\Lambda$, this algebraic subset is empty or equidimensional of the expected codimension $\codim_X P_\Lambda(X) = k$. Its class in the Chow group $\ChowGrp_{d-k}(X)$ is independent of the general choice of $\Lambda$.

\begin{definition}
The \emph{polar classes} of $X \subseteq \PP^n$ are the classes $P_k(X) = [P_\Lambda(X)] \in \ChowGrp_{\dim X-k}(X)$ for general $[\Lambda] \in \PGr(n-d+k-2,\PP^n)$, $k=0,\dots,d$. The \emph{polar degrees} of $X$ are the degrees $\mu_k(X) \coloneq\deg_{\PP^n} P_k(X)$.
\end{definition}

For example, $P_0(X) = [X] \in \ChowGrp_d(X)$, hence $\mu_0(X) = \deg X$. Our main result \Cref{thm:intro:polardegrees} concerns the polar degrees of the variety $L^{-1}$.
For more background on polar degrees and their place in projective geometry, see for example \cite{Piene1978,Piene1988}.

A useful approach to studying polar degrees of a projective variety $X \subseteq \PP^n$ is via its \emph{conormal variety}
\[
\Con(X) \coloneq \overline{\Set{(x,H) \in X_\reg \times (\PP^n)^* | H \supset T_{x}X }} \subset \PP^n \times (\PP^n)^*.
\]
This is an irreducible biprojective variety of dimension $n-1$.
The dual variety $X^\vee$ is the image of $\Con(X)$ under the projection $\operatorname{pr}_2 \colon \Con(X) \rightarrow (\PP^n)^*$.
It is the Zariski closure in $(\PP^n)^*$ of the set of hyperplanes of $\PP^n$ that are tangent to $X$ at a smooth point.

Consider the class of the conormal variety
\[
[\Con(X)] = 
\delta_0(X) h_1^{n}h_2 + \delta_1(X) h_1^{n-1}h_2^2 + \cdots + \delta_d(X) h_1^{n-d}h_2^{d+1} 
\in \ChowGrp^{n+1}(\PP^n \times (\PP^n)^*)
\]
in the Chow ring $\ChowGrp^\bullet(\PP^n \times (\PP^n)^*) = \ZZ[h_1,h_2]/\langle h_1^{n+1}, h_2^{n+1} \rangle$, where  $h_1 = [H \times (\PP^n)^*]$ and $h_2 = [\PP^n \times H']$, respectively.
In this notation, the bidegrees of the conormal variety encode the polar degrees as $\mu_k(X) \coloneq \delta_{d-k}(X)$. In other words, $\mu_{k}(X)$ counts the number of intersection points of $\Con(X)$ with a general pair of linear spaces  of type $[\PP^{n-d+k} \times \PP^{d-k+1}]$. From this, one can also deduce that $\mu_{d-n+1+\dim X^\vee}(X) = \deg X^\vee$.

For later reference, we also mention the behavour of polar degrees under the following construction: Let $X \subseteq \PP(\KK^a)$, $Y \subseteq \PP(\KK^b)$ be projective varieties with affine cones $\widehat{X}$, $\widehat{Y}$ (here we follow the convention $\widehat{\emptyset} \coloneq \{0\}$). The \emph{external join} of $X$ and $Y$ is the projective variety
\[
J(X,Y) \coloneq \PP(\widehat{X}\times \widehat{Y}) \subseteq \PP(\KK^{a+b}).
\]
The name comes from the fact that, under the natural inclusions $X,Y \hookrightarrow \PP^{a+b-1}$, $J(X,Y)$ can be interpreted as the union of lines connecting $X$ and $Y$ (assuming $X,Y\neq \emptyset$). This construction includes linearly degenerate subvarieties ($Y = \emptyset$) and cones over a projective subspace ($Y=\PP^{b-1}$).

In the following we denote the \enquote{bi-affine cone} of a bi-projective variety also by $\widehat{(-)}$.

\begin{lemma}\label{lem:convolution_of_polardegs}
Let $X \subseteq \PP(\KK^a)$, $Y \subseteq \PP(\KK^b)$ be projective varieties. The conormal variety of $J(X,Y)$ admits the following description of its affine cone (up to natural identifications)
\[
\widehat{\Con(J(X,Y))} = \widehat{\Con(X)} \times \widehat{\Con(Y)} \subseteq \KK^a \oplus (\KK^a)^* \oplus \KK^b \oplus (\KK^b)^*.
\]
Furthermore, $\mu_k(J(X,Y)) = \sum_{i+j=k} \mu_i(X)\mu_j(Y)$. Consequently, for nonempty $X,Y$ the dual variety $J(X,Y)^\vee$ is never a hypersurface.
\end{lemma}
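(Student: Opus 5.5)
The plan is to prove the description of the conormal cone first, read off the polar degrees from the resulting bidegree class, and finally deduce the statement about hypersurfaces from a dimension count.

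\textbf{The conormal cone.} Work with affine cones. The affine cone of $J(X,Y)$ is $\widehat{X}\times\widehat{Y}\subseteq \KK^a\oplus\KK^b$. Its smooth points include $\widehat{X}_\reg'\times\widehat{Y}_\reg'$, where $'$ denotes the smooth locus of the cone minus the origin; if one factor is empty its cone is $\{0\}$, which is handled trivially. At such a point $(x,y)$ the tangent space is $T_x\widehat{X}\oplus T_y\widehat{Y}$. Hence a covector $(\xi,\eta)\in(\KK^a)^*\oplus(\KK^b)^*$ annihilates this tangent space if and only if $\xi\perp T_x\widehat X$ and $\eta\perp T_y\widehat Y$.

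The affine conormal cone $\widehat{\Con(Z)}$ of a cone $\widehat Z$ is the closure of the set of pairs $(z,\zeta)$ with $z$ a smooth point of $\widehat Z$ and $\zeta\perp T_z\widehat Z$. Taking closures of the description above gives $\widehat{\Con(J)}=\widehat{\Con(X)}\times\widehat{\Con(Y)}$. Here one uses that the closure of a product is the product of closures, and that the points with $x=0$ or $y=0$ lie in a lower-dimensional set, so they do not affect the closure. The bi-affine cone of $\Con(J)$ inside $(\KK^{a+b})\times(\KK^{a+b})^*$ is exactly this set.

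\textbf{The polar degrees.} Next compute the bidegree class. The multidegree of a bi-conic subvariety in $\KK^{N}\times\KK^{N}$ with the $\ZZ^2$-grading, that is, the class of its biprojectivization, is multiplicative under products after regrading. Concretely, encode $[\Con(X)]$ by the polynomial $\sum_k\mu_k(X)$ times suitable monomials.

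I would make this precise via the standard fact that the multidegree polynomial of $\widehat{A}\times\widehat{B}$ is the product of the multidegree polynomials of $\widehat A$ and $\widehat B$. This is because multidegrees are computed from Hilbert series or equivariant classes, and both are multiplicative under tensor products. In the $\ZZ^2$-graded setting the $\ZZ^4$-grading on $\KK^a\oplus(\KK^a)^*\oplus\KK^b\oplus(\KK^b)^*$ coarsens to $\ZZ^2$, identifying the two primal and the two dual gradings.

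The multidegree of $\widehat{\Con(X)}$ in $\ZZ[h_1,h_2]$ is $\sum_i \delta_i(X)h_1^{a-1-i'}h_2^{\ldots}$. Normalized as the polynomial $\sum_k \mu_k(X) s^k$ up to an overall monomial, the product formula gives the convolution $\mu_k(J)=\sum_{i+j=k}\mu_i(X)\mu_j(Y)$. The remaining work is bookkeeping of exponents: $\dim J=\dim X+\dim Y+1$, the codimension is additive, and the index shift for $\mu$ is consistent.

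\textbf{The dual variety.} For the final claim, suppose $X,Y\neq\emptyset$. The dual variety $J^\vee$ is the projectivization of $\widehat{X^\vee}\times\widehat{Y^\vee}$, the image of the conormal cone. Each cone $\widehat{X^\vee}$ has dimension at most $a-1$, and similarly for $Y$. Therefore
\[
\dim J^\vee \le (a-1)+(b-1)-1=a+b-3,
\]
which is less than the hypersurface dimension $a+b-2$. Alternatively, the top nonzero polar degree drops.

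The main obstacle is justifying multiplicativity of multidegrees with the correct grading and index normalization. I would handle it by intersecting with general product linear subspaces: in each factor, take linear forms general in $\KK^a$ and $\KK^b$ separately. This is not fully general, so instead I would use the equivariant/Hilbert-series characterization of multidegrees, in which products are clearly multiplicative.
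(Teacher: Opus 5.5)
Your proposal is correct and follows the paper's proof essentially step by step. The splitting $T_{(x,y)}(\widehat{X}\times\widehat{Y})=T_x\widehat{X}\oplus T_y\widehat{Y}$ gives the product description of the conormal cone. Multiplicativity of bigraded Hilbert series, and hence of multidegrees, under the tensor product of coordinate rings gives the convolution after the monomial bookkeeping you sketch. Finally, $J(X,Y)^\vee=\PP(\widehat{X^\vee}\times\widehat{Y^\vee})$ has dimension at most $a+b-3$.

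One small point for the bookkeeping: use the untruncated multidegree of the bi-affine cone in $\ZZ[h_1,h_2]$, not the class in $\ChowGrp^\bullet(\PP^{a-1}\times\PP^{a-1})$ (your phrase ``the class of its biprojectivization''). For $X=\PP^{a-1}$, e.g.\ $X=\PP^0$ as in the later cone reduction, one has $\Con(X)=\emptyset$ while $\mu_0(X)=1$. Only the multidegree, namely $h_2^{a}$ for $\KK^a\times\{0\}$, records this.
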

\begin{proof}
The description of the conormal variety follows immediately from the observation that $T_{(\hat x,\hat y)}\widehat{X}\times \widehat{Y} = T_{\hat x} \widehat{X} \oplus T_{\hat y}\widehat{Y} \subseteq \KK^a \oplus \KK^b$.

The convolution formula for the polar degrees then follows as the bihomogeneous coordinate ring of $\Con(J(X,Y))$ is the (graded) tensor product of the bihomogeneous coordinate rings of $\Con(X)$ and $\Con(Y)$. Therefore, the bigraded Hilbert series and hence also the bidegree of the varieties multiply.

Finally, the description of the conormal variety shows that $ J(X,Y)^\vee = J(X^\vee,\PP^{b-1}) \cap J(\PP^{a-1},Y^\vee)$ has codimension at least two.
\end{proof}

\begin{example}
If for our linear space $L \subseteq \PP(\KK^E)$ there exist a partition $E = E_1 \sqcup E_2$ with
\[
\widehat{L} = (\widehat{L} \cap \KK^{E_1}) \oplus (\widehat{L} \cap \KK^{E_2}) \eqcolon \widehat{L}_1 \oplus \widehat{L}_2,
\]
then not only $L = J(L_1,L_2)$, but more interestingly also $L^{-1} = J(L_1^{-1},L_2^{-1})$. In this way, a disconnected matroid $M$ corresponds to the reciprocal linear space splitting as an external join. \Cref{lem:convolution_of_polardegs} allows us in principle to reduce our discussion of \Cref{thm:intro:polardegrees} to the case of connected matroids, though most of our discussion will only rely on the condidition that $M$ has no coloops.
\end{example}



In the case of reciprocal linear spaces, the conormal variety admits a useful birational parametrization, similar to the Horn--Kapranov uniformization for toric varieties. Just as in the toric situation, the parametrization of $\Con(L^{-1})$ has an indeterminacy locus. We will therefore first have to resolve this locus before using this map to compute the polar degrees of $\Con(L^{-1})$.

Before defining the parametrization we introduce some notation that will be used throughout the paper.
Let $x = [x_0:\dots:x_n], y = [y_0:\dots:y_n] \in \PP^n$, then $xy \coloneq [x_0y_0: \dots :x_ny_n]$ is the Hadamard product, and $x^k \coloneq [x_0^k:\dots:x_n^k]$ is the coordinate-wise $k$-th powers, $k \in \ZZ$, whenever these are defined. Also, we will denote the Hadamard product of varieties with $\star$, i.e., if $X,Y \subset \PP^n$ are two embedded projective varieties, then
\[
X \star Y \coloneq \overline{\Set{xy \in \PP^n | x\in X, y \in Y }} \subseteq \PP^n.
\]
\begin{proposition}\label{prop: parametrization}
The following rational map parametrizes $\Con(L^{-1})$
\[
\psi\colon  L \times L^\perp \dashrightarrow \PP \times (\PP^n)^* \qquad (x, y) \mapsto (x^{-1}, x^2y).
\]
\end{proposition}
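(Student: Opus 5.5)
We need to show two things: that for general $(x,y)\in L\times L^\perp$ the image $\psi(x,y)$ lies in $\Con(L^{-1})$, and that the image is dense in $\Con(L^{-1})$. By irreducibility of $\Con(L^{-1})$ and dimension count, it then suffices to check that the image has dimension $n-1$. Here $\dim L\times L^\perp = d + (n-1-d) = n-1$, so it is enough to show that $\psi$ is generically finite (in fact injective). We work on the dense open set where all coordinates of $x$ are nonzero; this set is nonempty because $L$ is not contained in a coordinate hyperplane. In the formula, the first factor $\PP$ is to be read as $\PP^n$.

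\emph{Step 1: tangent space.} Let $\iota$ be the Cremona map. For $x\in L$ with all coordinates nonzero, $\iota$ is a local isomorphism near $x$, so $p=x^{-1}$ is a smooth point of $L^{-1}$ whenever $\iota|_L$ is birational onto its image. Birationality holds since $\iota$ is an involution on the torus. On affine cones, the differential of $\hat x\mapsto \hat x^{-1}$ is $v\mapsto -\hat x^{-2}v$ (Hadamard product). Hence
\[
T_{\hat p}\widehat{L^{-1}} = \hat x^{-2}\cdot \widehat L = \{\hat x^{-2} v : v\in \widehat L\}.
\]
A hyperplane $H=[h]$ contains $T_pL^{-1}$ exactly when $\sum_i h_i x_i^{-2} v_i=0$ for all $v\in\widehat L$, that is, when $x^{-2}h\in \widehat L^\perp$. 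Equivalently, $h=x^2y$ with $y\in\widehat{L}^\perp$. This gives at once that $\psi(x,y)\in\Con(L^{-1})$. It also shows that the fiber of $\Con(L^{-1})$ over such a $p$ is exactly $\{x^2y : y\in L^\perp\}$. Note too that $x^2y\neq 0$ when $y\neq 0$, because $x$ has no zero coordinates, so $\psi$ is defined on this open set.

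\emph{Step 2: dominance.} From Step 1, the map $(x,y)\mapsto (x^{-1},x^2y)$ is a bijection from $\{x\in L:\ x_i\neq0\ \forall i\}\times L^\perp$ onto the set of pairs $(p,H)$ with $p$ in the dense open subset $\iota(L\cap T)$ of $L^{-1}$ (where $T$ is the torus) and $H$ tangent at $p$. The inverse is $x=p^{-1}$, $y=p^{2}H$, taken projectively, since the rescalings of $x$ and $y$ in $\PP^n$ are compatible. This set of pairs is dense in $\Con(L^{-1})$, because $\Con(L^{-1})$ is irreducible and the set is the preimage of an open dense subset of $(L^{-1})_\reg$ under the first projection. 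Therefore $\psi$ is birational onto $\Con(L^{-1})$.

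The only subtle point I expect is the assertion that points of $\iota(L\cap T)$ are smooth on $L^{-1}$ and that the projective tangent-space computation is compatible with rescaling. Both follow because $\iota$ restricts to an automorphism of the torus $T\subseteq\PP^n$ and $L^{-1}\cap T=\iota(L\cap T)$ is smooth, being isomorphic to an open subset of $L$.
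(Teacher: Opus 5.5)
Your proof is correct and follows the same route as the paper: at torus points $p=x^{-1}$ of $L^{-1}$ the tangent hyperplanes are exactly the $x^{2}y$ with $y\in L^\perp$, so the image of $\psi$ contains a dense open subset of $\Con(L^{-1})$. The only differences are that you derive this tangent-hyperplane description directly from the differential $v\mapsto -\hat x^{-2}v$ of the Cremona map, whereas the paper cites the proof of \cite[Proposition 4.1]{MatsubaraHeoTelen2026}, and that your explicit inverse $(p,H)\mapsto(p^{-1},p^{2}H)$ also proves that $\psi$ is birational, which the paper uses later in \Cref{lem:degreeofsquaringCon} without a separate argument.
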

Here and in the following, we denote by $\TT^n = (\KK^\times)^{n+1}/\KK^\times \subseteq \PP^n$ the algebraic torus.
\begin{proof}
Let $p \in L^{-1} \cap \TT^n \subseteq (L^{-1})_\reg$ and set $x \coloneq p^{-1}$.
By the proof of \cite[Proposition 4.1]{MatsubaraHeoTelen2026}, a hyperplane $H \subset \PP^n$ with coordinates $z \in (\PP^n)^*$ is tangent to $L^{-1}$ at $p$ if and only if there exists $y \in L^\perp$ such that $z = p^{-2} y = x^2y$. Since $p \in L^{-1}$, $x \in L$. Therefore, the image of the rational map $\psi$ contains the open subset
\[
\Set{(p, H) \in (L^{-1}\cap \TT^n) \times (\PP^n)^* | H \supset T_{p}(L^{-1}) } \subseteq \Con(L^{-1}).
\]
After closure, we have $\overline{\Im(\psi)} = \Con(L^{-1})$.
\end{proof}

By construction of $L^\perp$, $L\star L^{\perp}$ is always contained in the hyperplane $H_+ \coloneq V(x_0+\dots+x_n)$. As we will see, if $L^{-1}$ is not dual-defective, then this Hadamard product has dimension $n-1$ and therefore equals this hyperplane. This suggests that the map
\[
\phi \colon \PP^n \times (\PP^n)^*\dashrightarrow \PP^n \times (\PP^n)^*, \qquad (x,y) \mapsto (x^{-1}, xy) 
\]
is easier to understand than $\psi$. The main objective of the next section is to find the relationship between the image of $L\times L^\perp$ under $\psi$ and under the simpler map $\phi$. To do so, we use techniques from tropical geometry.

\subsection{Tropicalization and non-defectivity}

In this section we will recall \textit{tropical linear spaces}. This will allow us to give a combinatorial proof of \Cref{con:dualdef}. Also, we will be able to reduce the problem of determining the dual degree of $L^{-1}$ to the computation of the degree of the rational map 
\[
L\times L^{\perp} \dashrightarrow H_+= V(z_0+\dots+z_n) \subseteq (\mathbb{P}^n)^*, \qquad (x, y) \mapsto xy.
\]
We begin by recalling the fundamentals of tropical geometry and tropical linear spaces. For a thorough introduction, see \cite{Maclagan2015}.

There are several ways to define the tropicalization of an embedded projective variety. Here, we decide to go with the standard modern definition, which arises from monomial orderings. Let $e_E \in \RR^E$ be the vector whose entries are all equal to 1, recall that
every vector $\omega \in \RR^{n+1}/\RR e_E$ defines a partial monomial ordering on the variables $x_0,\cdots,x_n$ of the coordinate ring of  $\TT^n$ by 
\[
x_0^{\alpha_0}\dots x_n^{\alpha_n} \leq x_0^{\beta_0}\dots x_n^{\beta_n} \iff \omega_0\alpha_0 + \dots + \omega_n\alpha_n \leq\omega_0\beta_0 + \dots + \omega_n\beta_n 
\]
The initial term of a polynomial with respect to $\omega$ is the set of monomials of highest $\omega$-weight appearing in its support.
For $I \subset \KK[\TT^n]$ an ideal,
the \emph{initial ideal of $I$ with respect to $\omega$} is
\[
\text{in}_\omega(I) \coloneq \langle \text{in}_\omega(f) \mid f \in I\rangle
\]

Let $X^\circ \subset \TT^n$ be a $d$-dimensional irreducible subvariety of $\TT^n$. Its \emph{tropicalization} is 
\[
\trop(X^\circ) = \Set{\omega \in \RR^{n+1}/\RR e_E | \text{in}_\omega(X^\circ) \neq \langle 1 \rangle}
\]
If $X \subset \PP^n$ is an embedded projective variety we set $\trop(X) \coloneq \trop(X \cap \TT^n)$.

The Structure Theorem  \cite[Theorem 3.3.5]{Maclagan2015} states that if $X^\circ$ is an irreducible $d$-dimensional subvariety of $\TT^n$, then $\trop(X^\circ)$ is the support of a weighted balanced rational polyhedral complex $\mathcal{C}_{X^\circ}$ pure of dimension $d$, i.e.\ all maximal cells have dimension $d$. The adjectives \emph{weighted balanced} mean that to each maximal cell $\sigma$ of $\mathcal{C}_{X^\circ}$ we assign a non-negative integer called the \emph{weight} of $\sigma$, in such a way that it respects the \emph{balancing condition}. We will explain this construction for linear spaces, for the general setting, see \cite[Section 3.4]{Maclagan2015}.

It is important to keep in mind that there are many polyhedral complexes with support $\trop(X)$. In what follows, unless stated otherwise, when we refer at $\trop(X)$ as a polyhedral complex, we mean the coarsest polyhedral complex with support $\trop(X)$. 
\\

Tropicalizing allows us to express Hadamard products and powers of varieties in a simple way.

\begin{lemma}[{\cite[Lemma 4.3]{MatsubaraHeoTelen2026}}]
\label{lem: tropLinSpaces}
Let $X,Y \subset \mathbb{P}^n$ be irreducible projective varieties such that $X = \overline{X^{\circ}}$ and $Y = \overline{Y^{\circ}}$. Then we have the following equalities of subsets of $\RR^{n+1}/\RR e_E$:
\begin{align*}
\trop(X \star Y) &= \trop(X) + \trop(Y) \\
\trop(X^k) &= \trop(X), \qquad k\geq 1,
\end{align*}
where $+$ denotes the Minkowski sum of sets. In general, these equalities do not hold at the level of weighted polyhedral complexes.
\end{lemma}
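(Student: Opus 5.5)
The plan is to reduce both equalities to the standard fact that tropicalization commutes with monomial maps between tori. Concretely, I will use \cite[Corollary 3.2.13]{Maclagan2015}: if $\alpha\colon \TT^{m}\to\TT^{n}$ is a homomorphism of tori and $Z\subseteq\TT^m$ is a subvariety, then $\trop\bigl(\overline{\alpha(Z)}\bigr)=\trop(\alpha)(\trop(Z))$. Here $\trop(\alpha)$ is the linear map on cocharacter spaces induced by $\alpha$.

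\textbf{Reduction to the torus.} Since $X=\overline{X^\circ}$ and $Y=\overline{Y^\circ}$ with $X^\circ=X\cap\TT^n$ and $Y^\circ=Y\cap\TT^n$ dense, the set $\{xy\mid x\in X^\circ,\,y\in Y^\circ\}$ is dense in $X\star Y$. Indeed, the Hadamard product map is a morphism on the locus where it is defined, and $X^\circ\times Y^\circ$ is dense in $X\times Y$. So $(X\star Y)\cap\TT^n$ equals the closure in $\TT^n$ of $m(X^\circ\times Y^\circ)$, where
\[
m\colon \TT^n\times\TT^n\to\TT^n,\qquad (x,y)\mapsto xy
\]
is a homomorphism of tori. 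Likewise $X^k\cap\TT^n$ is the closure of $p_k(X^\circ)$, where $p_k\colon x\mapsto x^k$.

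\textbf{Applying the monomial-map statement.} For the product map, I will first note that $\trop(X^\circ\times Y^\circ)=\trop(X^\circ)\times\trop(Y^\circ)$ inside $(\RR^{n+1}/\RR e_E)^2$. This can be checked directly from initial ideals, since the ideal of a product is generated by the two ideals in disjoint variables, and the initial ideal with respect to $(\omega,\omega')$ is generated by the two separate initial ideals. Alternatively it is \cite[Theorem 6.2.18]{Maclagan2015}. The induced linear map $\trop(m)$ is addition $(\omega,\omega')\mapsto\omega+\omega'$, so the monomial-map statement gives $\trop(X\star Y)=\trop(X)+\trop(Y)$.

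For powers, $\trop(p_k)$ is multiplication by $k$, so $\trop(X^k)=k\cdot\trop(X)$. The key point is that we work with constant coefficients, i.e.\ the trivial valuation on $\KK$, as in the paper's definition via initial ideals. In this setting $\trop(X^\circ)$ is invariant under positive scaling, because $\mathrm{in}_{\omega}=\mathrm{in}_{\lambda\omega}$ for $\lambda>0$. It is in fact the support of a rational polyhedral fan. Hence $k\cdot\trop(X)=\trop(X)$ for $k\geq1$.

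\textbf{Expected obstacle.} The only real subtlety is the density and closure bookkeeping in the reduction step. One must check that taking closure in $\PP^n$ and then intersecting with $\TT^n$ agrees with taking closure inside $\TT^n$; this is immediate because $\TT^n$ is open. The final remark, that the equalities fail at the level of weighted complexes, I will only illustrate. The weights on the image get multiplied by the degree of $m$ or $p_k$ restricted to the relevant cells, and for $p_k$ by lattice indices that change under scaling by $k$. For instance, for a line $X$ the cone weights of $X^k$ reflect the degree of $X^k$, which generally differs from that of $X$.
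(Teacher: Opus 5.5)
Your argument is correct. The paper gives no proof of this lemma; it quotes it from \cite[Lemma 4.3]{MatsubaraHeoTelen2026}, so there is no in-paper argument to compare against. What you give is the standard self-contained proof:
\begin{enumerate}
\item On the torus, $X\star Y$ and $X^k$ are the closures of the images of $X^\circ\times Y^\circ$ and $X^\circ$ under the torus homomorphisms $(x,y)\mapsto xy$ and $x\mapsto x^k$.
\item Tropicalization commutes with such maps, and $\trop$ of a product is the product of the tropicalizations.
\item In the constant-coefficient setting $\trop(X)$ is a union of cones, so the factor $k$ disappears.
\end{enumerate}
Your density and closure bookkeeping is fine: irreducibility makes $X^\circ\times Y^\circ$ dense in $X\times Y$, and $\TT^n$ is open. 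This is also exactly the mechanism the paper uses later without comment, when it writes $\trop(\phi(Z))=\trop(\phi)(\trop(Z))$ in the proof of \Cref{prop:gettingridof2}. Your route therefore has the advantage of showing the lemma as a special case of that compatibility.

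One correction to your illustration of the final remark concerns the direction of the weight correction. By \cite[Theorem 3.12]{sturmfels2007tropical}, as used in \Cref{prop: redStep1}, when $m|_{X\times Y}$ is generically finite the weights of $\trop(X\star Y)$ are the pushforward weights of $\trop(X)\times\trop(Y)$ under addition \emph{divided} by $\deg(m|_{X\times Y})$, not multiplied by it. Your line example already settles the case of powers. A concrete instance for Hadamard products is a general line $X\subset\PP^2$:
\begin{enumerate}
\item $X\star X=\PP^2$ carries weight $1$.
\item The pushforward of $\trop(X)\times\trop(X)$ puts weight $2$ on each two-dimensional cone $\pos(e_i,e_j)$, since each arises from both ordered pairs of rays.
\item This factor matches $\deg(m|_{X\times X})=2$, because the general fibre is $\{(a,b),(b,a)\}$.
\end{enumerate}
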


As an immediate consequence, using \Cref{prop: parametrization}, we obtain the following equality of sets:
\[
\trop((L^{-1})^\vee) = \trop(L) + \trop(L^\perp)
\]

When $L$ is a linear space, we call $\trop(L)$ a \textit{tropicalized linear space}. In this case
we can endow $\trop(L)$ with a fan structure with nice combinatorics, which is called the \textit{Bergman fan} $\Sigma_L$ of $L$. This can cause confusion: Indeed, some authors define the Bergman fan of a linear space $L$ as the coarsest fan structure on $\trop(L)$. Here, we decide to go with the convention in \cite{Ardila2022}, where the Bergman fan is a refinement of the coarse fan.

The construction of the Bergman fan of a matroid $M$ goes as follows.
We represent each flat $F$ of $M$ by the incidence vector $ e_F \coloneq \sum_{i \in F}  e_i \in \RR^{E}/\RR e_E $, where $ e_i$ is the $i$-th basis vector. For any flag of flats $\mathcal{F} = (\emptyset \subsetneq F_{1} \subsetneq \dots \subsetneq F_k \subsetneq E)$ of $M$ we consider the polyhedral cone positively spanned by the incidence vectors
\[
C_{\mathcal{F}} := \pos( e_{F_1}, \dots,  e_{F_k}) +  e_E \subset \RR^{E}/\RR e_E
\]
This is a $k$-dimensional simplicial cone in $\RR^{E}/\RR e_E$.
By \cite[Theorem 4.2.6]{Maclagan2015}, the collection of cones $\set{ C_{\mathcal{F}} | \mathcal{F} \text{ flag of flats of }M}$ forms a pure simplicial fan of dimension $\rank(M)-1$ in $\RR^{E}/\RR e_E$. The support of this fan equals the tropicalized linear space $\trop(L)$.

Since this construction only uses the set of flats of $M$, it can be repeated for non-realizable matroids. We call the fan obtained in this way the \textit{Bergman fan} of $M$ and denote it $\Sigma_M$. Fans that can be realized as Bergman fans of some matroid are called \textit{tropical linear spaces}.

\begin{example}
The Bergman fan of $M_{\text{ex}}$ is a $3$-dimensional fan in $\RR^7/\RR e_E$. It has 36 rays which are labeled by the 36 flats $\emptyset \neq F \subsetneq E$ of the matroid. It has 78 maximal cones labeled by the 78 flags of flats of the form $\emptyset \subsetneq F_1 \subsetneq F_2 \subsetneq F_3 \subsetneq E$.
\end{example}

As a direct application of this tropical machinery, we prove \Cref{con:dualdef}, i.e.\ we characterize when $\codim_{(\PP^n)^*} (L^{-1})^\vee > 1$.

\begin{theorem}\label{thm: defectiveness}
    The reciprocal linear space $L^{-1}$ is dual-defective if and only if the matroid $M$ is not connected.
\end{theorem}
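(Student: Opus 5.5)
The two directions are of different nature: defectiveness for disconnected $M$ follows formally from \Cref{lem:convolution_of_polardegs}, while non-defectiveness for connected $M$ comes from a dimension count via tropicalization.

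\textbf{Disconnected implies defective.} Suppose $M$ is disconnected, with a partition $E=E_1\sqcup E_2$ into nonempty parts such that $\widehat L=\widehat L_1\oplus\widehat L_2$. Since $L$ lies in no coordinate hyperplane, $M$ is loopless. Hence each $\widehat L_i\neq 0$, so both $L_1$ and $L_2$ are nonempty. By the example following \Cref{lem:convolution_of_polardegs}, we have $L^{-1}=J(L_1^{-1},L_2^{-1})$ with both factors nonempty. The lemma then says that $(L^{-1})^\vee$ is not a hypersurface, i.e.\ $L^{-1}$ is dual defective.

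\textbf{Connected implies non-defective: reduction to a tropical dimension.} Now let $M$ be connected. If $|E|=1$ then $d=n=0$, which is excluded by $d<n$, so $|E|\geq 2$ and $M$ has no coloops. Hence $M^\perp$ is loopless, so $L^\perp$ meets the torus. Since $x^2y$ is then a torus point for general $(x,y)\in L\times L^\perp$, the parametrization of \Cref{prop: parametrization} shows that $(L^{-1})^\vee$ meets $\TT^n$. Therefore
\[
\dim (L^{-1})^\vee=\dim\trop((L^{-1})^\vee)=\dim\bigl(\trop(L)+\trop(L^\perp)\bigr),
\]
using \Cref{lem: tropLinSpaces} for the last equality. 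This Minkowski sum lies in $\trop(H_+)$, whose dimension is $n-1$. So it suffices to find a flag $\mathcal F$ of flats of $M$ (of length $d$) and a flag $\mathcal G$ of flats of $M^\perp$ (of length $n-d-1$) such that the $n-1$ vectors $e_F$ ($F\in\mathcal F$) and $e_G$ ($G\in\mathcal G$) are linearly independent in $\RR^E/\RR e_E$. The Minkowski sum $C_{\mathcal F}+C_{\mathcal G}$ is then $(n-1)$-dimensional, giving $\dim(L^{-1})^\vee=n-1$.

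\textbf{Constructing the independent flags.} I would build $\mathcal F,\mathcal G$ by induction on $|E|$. The tool is Tutte's theorem: for a connected matroid and any element $e$, at least one of $M\setminus e$ and $M/e$ is connected. Dually, contraction and deletion swap under $\perp$, so $M^\perp/e$ or $M^\perp\setminus e$ is connected. In either case the inductive hypothesis gives flags for the smaller pair $(N,N^\perp)$ on $E\setminus e$. These lift to $E$ by taking closures of flats in $M$ (respectively adding $e$ to flats of $M^\perp$), and then one appends one new flat on the side whose rank went up. The new vector must be independent of the old ones modulo $e_E$. This is checked by looking at the coordinate $e$: the lifted vectors have a controlled $e$-entry, and the appended flat separates $e$ from the rest.

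\textbf{Main obstacle and fallback.} I expect the main obstacle to be exactly this lifting step, in particular the bookkeeping modulo $e_E$ and the case where the lift of a flat changes membership of $e$. If this becomes messy, I would use an algebraic alternative. By the trop equality, it suffices that the multiplication map $L\times L^\perp\to H_+$ is dominant. Its differential at a general $(x,y)$ has image $y\widehat L+x\widehat L^\perp$, whose orthogonal complement is
\[
W=\{w : wx\in\widehat L,\ wy\in\widehat L^\perp\}=x^{-1}\widehat L\cap y^{-1}\widehat L^\perp\ni e_E .
\]
So it suffices to show that $\dim W=1$ for general $x,y$ when $M$ is connected. I would attack this by the same deletion/contraction induction, now with a degeneration of $x$ or $y$ along a coordinate direction $e$.
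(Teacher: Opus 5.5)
Your proposal is correct and is essentially the paper's proof. The disconnected direction is the paper's remark via $L^{-1}=J(L_1^{-1},L_2^{-1})$ and \Cref{lem:convolution_of_polardegs}. The connected direction is \Cref{lem: connected}: an induction on $|E|$ via the deletion/contraction connectivity theorem, where flats are lifted by closure on one side and by adjoining $e$, together with the new flat $\{e\}$, on the other. Independence then follows because $e_{\{e\}}$ lies in the span.

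The differences are cosmetic. The paper swaps $M$ and $M^\perp$ so that it only has to treat the contraction case. It also takes $|E|=2$ as the base case, which you should state explicitly. Your algebraic fallback is not needed.
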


\begin{remark}\label{rmk: easydirectionofdualdef}
If $M$ is disconnected, then there exists a partition $E = E_1 \mathbin{\dot\cup} E_2$ such that $L^{-1} = J(L_1^{-1},L_2^{-1})$, hence $L^{-1}$ is dual-defective by \Cref{lem:convolution_of_polardegs}.
\end{remark}


For the reverse implication, recall that by \Cref{lem: tropLinSpaces}, set-theoretically
\[
\trop((L^{-1})^\vee) = \trop(M) + \trop(M^\perp) = \Sigma_M + \Sigma_{M^\perp},
\]
where $M^\perp$ denotes the dual matroid. In particular,
\begin{equation}\label{eq: tropDual}
\dim ((L^{-1})^\vee) = \dim(\Sigma_M + \Sigma_{M^\perp}) = \dim(L \star L^\perp)
\end{equation}


\begin{lemma}\label{lem: connected}
Suppose $M$ is a connected matroid. There exist flags of flats $\mathcal{F}$ in $M$ and $\mathcal{G}$ in $M^\perp$ such that the Minkowski sum $C_{\mathcal{F}}+C_{\mathcal{G}} \subseteq \mathbb{R}^{n+1}/ \RR e_E$ has dimension $n-1$.
\end{lemma}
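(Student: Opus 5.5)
The plan is to prove the statement by induction on $|E|=n+1$, using the classical fact (Tutte) that a connected matroid with $|E|\ge 2$ has an element $e$ such that $M\setminus e$ or $M/e$ is connected.

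\emph{Reduction to a linear-algebra statement.} Since $C_{\mathcal F}$ and $C_{\mathcal G}$ are simplicial cones, $\dim(C_{\mathcal F}+C_{\mathcal G})$ is the dimension of the linear span of $\{e_F\}_{F\in\mathcal F}\cup\{e_G\}_{G\in\mathcal G}$ in $\RR^{E}/\RR e_E$. A maximal flag of proper nonempty flats of $M$ has $d$ members, and one of $M^\perp$ has $\rank(M^\perp)-1=n-d-1$ members. So it suffices to find a maximal flag $\mathcal F$ of $M$ and a maximal flag $\mathcal G$ of $M^\perp$ such that these $n-1$ incidence vectors are linearly independent modulo $e_E$.

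\emph{Base case and reduction to deletion.} For $|E|=2$, a connected $M$ is $U_{1,2}$, so $n=1$ and the required dimension is $0$; there is nothing to prove. Since $(M/e)^\perp=M^\perp\setminus e$ and the statement is symmetric under exchanging $M$ and $M^\perp$ (a connected matroid has a connected dual), we may assume that $M\setminus e$ is connected. Then $e$ is not a coloop of $M$, so $\rank(M\setminus e)=\rank M$ and $(M\setminus e)^\perp=M^\perp/e$ has rank $\rank(M^\perp)-1$. Also $e$ is not a loop of $M^\perp$.

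\emph{Inductive step.} By induction there are maximal flags $\mathcal F'$ of $M\setminus e$ and $\mathcal G'$ of $M^\perp/e$ whose $n-2$ incidence vectors are independent in $\RR^{E\setminus e}/\RR e_{E\setminus e}$. We lift them as follows.
\begin{itemize}
\item Each $F'\in\mathcal F'$ becomes $\mathrm{cl}_M(F')$, which equals $F'$ or $F'\cup e$. This gives a maximal flag of $M$, since the ranks agree.
\item Each $G'\in\mathcal G'$ becomes $G'\cup e$, a flat of $M^\perp$ containing $e$.
\item We extend the $M^\perp$-flag at the bottom by the rank-one flat $G_0=\mathrm{cl}_{M^\perp}(e)$.
\end{itemize}
This gives a maximal flag of $M^\perp$ with one more member, for a total of $n-1$ vectors. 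The coordinate projection $\RR^E/\RR e_E\to\RR^{E\setminus e}/\RR e_{E\setminus e}$ is well defined and sends each lifted vector to its inductive counterpart. Hence the lifted $n-2$ vectors are independent. It remains to show that adding $e_{G_0}$ raises the dimension, i.e.\ that $e_{G_0}$ is not in the span $V$ of the lifted vectors modulo $e_E$.

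\emph{Main obstacle.} The key point is this last independence check. The projection of $e_{G_0}$, namely $e_{G_0\setminus e}$, may well lie in the projected span, so the argument must use the $e$-coordinate. All dual lifts $G'\cup e$ have $e$-coordinate $1$, while the $M$-lifts have $e$-coordinate $0$ or $1$ depending on whether $e\in\mathrm{cl}_M(F')$.

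The first approach is to compare a hypothetical relation $e_{G_0}\equiv\sum a_F e_F+\sum b_G e_G \pmod{e_E}$ coordinatewise on $e$ and on $G_0\setminus e$. The aim is to derive a contradiction using that $G_0$ is the minimal member of the dual flag and that $e$ is not a coloop of $M$.

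If this fails, the fallback is to use the freedom in choosing the lifts. Instead of adjoining $G_0$, we insert a suitable flat into the flag of $M$ or of $M^\perp$ at the position where $e$ first enters, for instance choosing $\mathcal F'$ so that $e\notin\mathrm{cl}_M(F')$ for all but the top flat. We then verify independence via a triangular (unimodular) pattern in the coordinates indexed by $e$ and by one element of each successive difference of flats.
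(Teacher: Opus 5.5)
Your setup is exactly the paper's argument with $M$ and $M^\perp$ interchanged: the reduction to linear independence, the choice of $e$ via Tutte, lifting by $\mathrm{cl}_M$ and $G'\mapsto G'\cup e$, and adjoining a rank-one flat at the bottom of the dual flag. Everything up to the final independence check is fine. That check is the one step you do not carry out; you only sketch two strategies, and neither works as stated.

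The first strategy relies on the wrong hypothesis. That $e$ is not a coloop of $M$ only guarantees $\rank_{M^\perp}(G_0)=1$. The fallback is not available either. The induction hypothesis gives you \emph{some} pair $\mathcal F',\mathcal G'$ with no control over it, so you cannot arrange, e.g., $e\notin\mathrm{cl}_M(F')$.

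The missing idea is that $G_0=\{e\}$. The loops of $M^\perp/e$ are exactly the elements of $\mathrm{cl}_{M^\perp}(e)\setminus e$. Moreover, $M^\perp/e=(M\setminus e)^\perp$ is connected on $|E|-1\ge 2$ elements, hence loopless. This is the dual of the paper's remark that $\{n\}$ is a flat of $M$ because $M/\{n\}$ is loopless.

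So your concern that $e_{G_0\setminus e}$ might lie in the projected span is vacuous: that vector is $0$. The check then becomes trivial. Since $e_e=e_{G_0}$ lies in the span, subtract it from $e_E$ and from each lift. This leaves $e_{E\setminus e}$, the inductive vectors $e_{F'}$ and $e_{G'}$, and $e_e$. By induction the first $n-1$ of these are linearly independent in $\RR^{E\setminus e}$. Adding $e_e$ gives an $n$-dimensional span in $\RR^E$, i.e.\ dimension $n-1$ modulo $e_E$.

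With this observation inserted, your argument is complete.
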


\begin{proof}
We will prove the statement by induction.
Suppose $n=1$, so the ground set has size 2. Since the only connected matroid in two elements is the uniform matroid $U_{1,2}$, the only flag, in both $M$ and $M^\perp$, is $\emptyset \subsetneq E$. This flag has $C_{\mathcal{F}} = 0$ and hence in this case $C_{\mathcal{F}} + C_{\mathcal{F}'}$ has dimension $0 = n-1$ in $\RR^2/\RR e_E$.

We now suppose that $M$ is a connected matroid of rank $d$ with ground set $E = \{0, \dots, n\}$, with $|E| \geq 3$. By \cite[Proposition 7.69(1)]{WhiteMatroids1986}, either the deletion $M\backslash\{n\} = (M^\perp/\{n\})^\perp$ or the contraction $M/\{n\} = (M^\perp\backslash\{n\})^\perp$ is a connected matroid. Upon exchanging the roles of $M$ and $M^\perp$, we can suppose that we are in the second case.
Since $M$ is connected, $n$ is not a loop of $M$. Therefore, $\rank(M/\{n\}) = \rank(M) - 1 = d$ and its dual satisfies $\rank((M/\{n\})^\perp) = n -\rank(M/\{n\}) = n-d$.

By our hypothesis, we can find flags of flats $\widetilde{\mathcal F} \coloneq (\emptyset \subsetneq \widetilde{F}_1 \subsetneq \cdots \subsetneq \widetilde{F}_{d-1} \subsetneq E\setminus\{n\})$ of $M/\{n\}$ and $\widetilde{\mathcal G} \coloneq (\emptyset \subsetneq \widetilde{G}_1 \subsetneq \cdots \subsetneq \widetilde{G}_{n-d-1} \subsetneq E\setminus\{n\})$ of $M^\perp\backslash\{n\}$
such that the Minkowski sum of the cones $C_{\widetilde{\mathcal F}}$ and $C_{\widetilde{\mathcal{G}}}$ has dimension $n-2$ in $\mathbb{R}^{n}/\RR e_{E\backslash\{n\}}$.

Consider now the flag $\mathcal{F} \coloneq (\emptyset \subsetneq \{n\} \eqcolon F_1 \subsetneq \dots \subsetneq F_{d} \subsetneq E)$, where $F_i \coloneq \widetilde{F}_{i-1} \cup \{n\}$. 
Since $M/\{n\}$ has no loops (because it is connected), $n$ does not have a parallel element in $M$, so $\{n\}$ is a flat of $M$. Therefore, the flats of $M/\{n\}$ are those obtained by removing $\{n\}$ from flats of $M$ containing this element, so $\mathcal{F}$ is a flag of flats of $M$. 
Similarly, the flag $\mathcal{G} \coloneq (\emptyset \subsetneq G_1 \subseteq \cdots \subseteq G_{n-d-1}\subseteq E)$, $G_i\coloneq \text{cl}(\widetilde{G}_i)$ is the smallest flat of $M^\perp$ containing $\widetilde{G}_i$, is a flag of flats for $M^\perp$. The inclusions are strict because $\text{cl}(\widetilde{G}_i)$ is either $\widetilde{G}_i$ itself or $\widetilde{G}_i \cup \{n\}$, and in the latter case, $\text{cl}(\widetilde{G}_j) = \widetilde{G}_j \cup \{n\}$ for all $j \geq i$.

By our induction hypothesis, the linear span of the Minkowski sum of the cones
\[
\langle C_{\widetilde{\mathcal{F}}} + C_{\widetilde{\mathcal{G}}}\rangle_\RR = 
\langle
 e_{\widetilde{F}_1},\dots, e_{\widetilde{F}_{d-1}} ,  e_{\widetilde{G}_1}, \dots,  e_{\widetilde{{G}}_{n-d-1}}
\rangle + e_{E\setminus\{n\}} \subseteq \mathbb{R}^{n}/\RR e_{E\setminus\{n\}}
\]
has dimension $n-2$. Therefore,
\[
 V_{n-1} \coloneq \langle e_{E\setminus\{n\}},
 e_{\widetilde{F}_1},\dots, e_{\widetilde{F}_{d-1}} ,  e_{\widetilde{G}_1},\dots,  e_{\widetilde{G}_{n-d-1}}
\rangle_\RR \subseteq \mathbb{R}^{n}
\]
has dimension $n-1$. Let $W_{n-1} \coloneq V_{n-1}\oplus 0 \subseteq \RR^n \oplus \RR e_{n} = \RR^E$. Since $ e_{n} \notin W_{n-1}$, the vector space
\begin{align*}
V_n \coloneq& \, \langle 
 e_{E\setminus \{n\}},  e_{n},  e_{\widetilde{F}_1},\dots, e_{\widetilde{F}_{d-1}} ,  e_{\widetilde{G}_1}, \dots, e_{\widetilde{G}_{n-d-1}} \rangle_\RR \\
 =& \, \langle 
 e_{E},  e_{F_1},\dots, e_{F_{d}} ,  e_{G_1}, \dots, e_{G_{n-d-1}} \rangle_\RR \subset \RR^{n+1} 
\end{align*}
has dimension $\dim V_n = 1+\dim W_{n-1} = n$. 

Therefore, the vector space spanned by the Minkowski sum of the cones
\[
\langle C_{\mathcal{F}} + C_{\mathcal{G}} \rangle =   \langle 
 e_{E},  e_{F_1},\cdots, e_{F_{d}} ,  e_{G_1}, \cdots, e_{G_{n-d-1}} 
 \rangle +    e_{E}
\]
has dimension $n-1$ in $\mathbb{R}^{n+1}/\RR     e_{E}$, and so $\dim  (C_{\mathcal{F}} + C_{\mathcal{G}}) = n-1$.
\end{proof}

We now finish the proof of \Cref{con:dualdef}.

\begin{proof}[Proof of \Cref{thm: defectiveness}]
We already saw one implication in \Cref{rmk: easydirectionofdualdef}.
   By \Cref{lem: connected}, if $M$ is connected then $\dim (\Sigma_M + \Sigma_{M^\perp}) = n-1$ and so in this case, by \eqref{eq: tropDual},  $\dim (L^{-1})^\vee = n-1$, so $(L^{-1})^\vee$ is not dual-defective.
\end{proof}

\begin{remark}
We have been informed that Dario Antolini has obtained a different, independent proof of \Cref{thm: defectiveness} in his doctoral thesis \cite{Antolini_2026}.
\end{remark}


A \textit{weighted fan} is a fan $\Sigma$ where we associate to each maximal cone $\sigma$ of $\Sigma$ a positive integer weight $\weight_\Sigma(\sigma)$.
For fans coming from projective varieties, the weights  are dictated by the geometry of the variety. For tropicalized linear spaces, or more generally for tropical linear spaces, the correct way to turn
its coarse fan into a weighted fan is to assign to each maximal cone the weight 1. For more details on how to assign weights to tropicalizations we refer to \cite[Section 3.4]{Maclagan2015}

Recall that for every positive integer $k$, $\trop(L^k) = \trop(L) = \Sigma_M$ as sets. However, in general, $\trop(L^k) \neq \trop(L)$ as weighted fans, instead
\[
\weight_{\trop(L^k)}(\sigma) = k^{d-c(M)+1}\weight_{\trop(L)}(\sigma)
\]
as shown below. This observation is key in the proof of the following proposition.

\begin{proposition}\label{prop: redStep1}
Suppose $M$ is a connected matroid, then $\deg ((L^{-1})^\vee) = 2^{d} \deg(\widetilde{\phi})$, where
\[
\widetilde{\phi}: L\times L^{\perp} \dashrightarrow H_+ \subseteq  \mathbb{P}^n, \qquad (x, y) \mapsto xy.
\]
\end{proposition}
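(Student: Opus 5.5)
Since $M$ is connected, $(L^{-1})^\vee$ is a hypersurface (\Cref{thm: defectiveness}). By \eqref{eq: tropDual} and \Cref{lem: connected}, $\dim(L\star L^\perp)=n-1$, so $L\star L^\perp=H_+$ and $\widetilde\phi$ is dominant and generically finite ($\dim L\times L^\perp = d+(n-d-1)=n-1$). The plan is to factor the dual map through the squaring map and a torus automorphism, compare degrees of maps via their tropical weights, and read off the factor $2^d$.

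By \Cref{prop: parametrization}, composing $\psi$ with the projection to the second factor gives a dominant map $L\times L^\perp \dashrightarrow (L^{-1})^\vee$, $(x,y)\mapsto x^2y$. On the torus this factors as
\[
L\times L^\perp \xrightarrow{\ \mathrm{sq}\times\mathrm{id}\ } L^2\times L^\perp \xrightarrow{\ m\ } L^2\star L^\perp=(L^{-1})^\vee,
\]
where $m$ is the Hadamard product. We will also use that $\psi$ is birational onto $\Con(L^{-1})$: $x$ is recovered from $x^{-1}$ and then $y$ from $x^2y$. However, the composite to $(L^{-1})^\vee$ may not be birational, since $\Con(L^{-1})\to (L^{-1})^\vee$ is only birational by reflexivity in characteristic $0$ (\cite{GKZ}). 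Hence $(L^{-1})^\vee$ is birational to $\Con(L^{-1})$, and therefore to $L\times L^\perp$.

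To compare degrees, I use the tropical degree formula / push-forward of tropical cycles (Sturmfels--Tevelev). For a dominant generically finite monomial-type map of the form $(x,y)\mapsto x^ay$ restricted to $L\times L^\perp$, the tropicalization of the image satisfies
\[
\deg(\text{map})\cdot \trop(\text{image}) = (\text{linear map } (u,v)\mapsto au+v)_*\,\trop(L\times L^\perp)
\]
as weighted fans, where $\trop(L\times L^\perp)=\Sigma_M\times\Sigma_{M^\perp}$ has all weights $1$. For $a=1$ this yields $\deg(\widetilde\phi)\cdot[H_+]$ from the sums of cones, with lattice indices $[N:\ZZ C_{\mathcal F}+\ZZ C_{\mathcal G}]$. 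For $a=2$, the cone $2C_{\mathcal F}+C_{\mathcal G}$ has the same support as $C_{\mathcal F}+C_{\mathcal G}$, but the lattice generated by $2e_{F_1},\dots,2e_{F_d},e_{G_j}$ has an extra index. The key claim is that this pushforward equals $2^{d}$ times the $a=1$ pushforward. This follows because the push-forward of $\trop(L)$ under multiplication by $2$ is $\trop(L^2)$ with weights $2^{d-c(M)+1}=2^d$, which is the weight relation stated just before the proposition. One should prove this via the degree of $\mathrm{sq}|_L\colon L\to L^2$ together with $\deg L^2$, or directly by lattice indices. The Hadamard product tropicalizes as the Minkowski-sum pushforward in both cases.

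Comparing the two identities, and using that $\trop$ of the hypersurface $H_+$ equals the support of $\trop((L^{-1})^\vee)$ set-theoretically (\Cref{lem: tropLinSpaces}), gives
\[
\deg(\text{map}_2)\cdot\trop((L^{-1})^\vee)=2^d\deg(\widetilde\phi)\,\trop(H_+).
\]
Since the map to $(L^{-1})^\vee$ is birational, $\deg(\text{map}_2)=1$. Taking degrees of tropical hypersurfaces, which measure the degree via intersection with a generic tropical line, yields $\deg (L^{-1})^\vee = 2^d\deg\widetilde\phi$ because $\deg H_+=1$.

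The main obstacle is the weight bookkeeping: proving that scaling the $\Sigma_M$ factor by $2$ multiplies every pushed-forward weight by exactly $2^d$. This requires that the lattice $\ZZ\langle e_{F_i}\rangle$ be saturated within the sum lattice, or an argument via $\deg(\mathrm{sq}|_L)=2^{c(M)-1}$ together with the $2^{d-c(M)+1}$ weight scaling. I would first try to prove the weight relation for $L^k$ as a lemma using $\deg(\mathrm{sq}|_L)$, which equals the order of the stabilizer $\{\pm1\}^{c(M)}/\pm1$.
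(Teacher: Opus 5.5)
Your proposal is correct and is essentially the paper's argument: two Sturmfels--Tevelev pushforwards of $\Sigma_M\times\Sigma_{M^\perp}$ compared cone by cone, the factor $2^d$ coming from the weight scaling of $\trop(L^2)$, the squared map having degree one by projective duality, and the degree read off from the tropical hypersurface; the only difference is that the paper pushes forward from $L^2\times L^\perp$ via $q$, whereas you apply the doubled linear map on $L\times L^\perp$. The obstacle you flag is in fact immediate: for every contributing pair, $\dim(\tau+\tau^*)=n-1=\dim\tau+\dim\tau^*$, so $N_\tau+N_{\tau^*}=N_\tau\oplus N_{\tau^*}$ and $[N_\tau+N_{\tau^*}:2N_\tau+N_{\tau^*}]=2^d$ with no saturation needed. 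Also, your phrase that the composite ``may not be birational'' contradicts your own correct conclusion that it is birational by reflexivity.
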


\begin{proof}
Since $M$ is connected, by \Cref{thm: defectiveness}, $(L^{-1})^\vee$ is a hypersurface. By \eqref{eq: tropDual} and \cite[Proposition 4.6]{MatsubaraHeoTelen2026}, set-theoretically 
\[
\trop((L^{-1})^\vee) = \trop(H_+).
\]
We now determine the weights of the cones of $ \trop((L^{-1})^\vee)$. Let $\sigma \in \trop((L^{-1})^\vee)$ be a maximal cone. We apply \cite[Theorem 3.12]{sturmfels2007tropical} to the map $q: L^2\times L^\perp \dashrightarrow \PP^n, (x, y) \mapsto x y$,
\begin{align*}
\weight_{\trop((L^{-1})^\vee)}(\sigma)
&= \frac{1}{\deg(q)} \sum_{\tau+\tau^* \supset \sigma} \weight_{\trop(L^2\times L^\perp)}(\tau + \tau^*)[N_\sigma:N_{\tau+\tau^*}]\\
&= \frac{1}{\deg(q)} \sum_{\tau+\tau^* \supset \sigma} \weight_{\trop(L^2)}(\tau)\weight_{\trop(L^\perp)}(\tau^*)[N_\sigma:N_\tau+N_{\tau^*}]
\end{align*}
where the sums run over all maximal cones $\tau \in \trop(L^2)$, $\tau^* \in \trop(L^\perp)$ such that $\tau + \tau^* \supset \sigma$, and $[N_\sigma:N_\tau+N_{\tau^*}]$ is the lattice index of $N_\tau + N_{\tau^*}$ inside $N_\sigma$. Here if $C$ is a cone, $N_C$ is the sublattice of $\ZZ^{n+1}/\ZZ  e_E$ generated by the lattice points of $C$.

As a consequence of projective duality, $\deg(q)= 1$, see \cite[Lemma 4.10]{MatsubaraHeoTelen2026}. By \cite[Lemma 4.9]{MatsubaraHeoTelen2026}, $\weight_{\trop(L^2)}(\tau) = 2^d \weight_{\trop(L)}(\tau)$ for all maximal cones $\tau \in \trop(L)$.
Therefore,
\begin{align*}
\weight_{\trop((L^{-1})^\vee)}(\sigma) &= 2^d \sum_{\tau+\tau^* \supset \sigma} \weight_{\trop(L)}(\tau)\weight_{\trop(L^\perp)}(\tau^*)[N_\sigma:N_\tau+N_{\tau^*}] \\
&= 2^d \deg(\widetilde{\phi}) \frac{1}{\deg(\widetilde{\phi}) }\sum_{\tau+\tau^* \supset \sigma} \weight_{\trop(L)}(\tau)\weight_{\trop(L^\perp)}(\tau^*)[N_\sigma:N_\tau+N_{\tau^*}] \\
&= 2^d \deg(\widetilde{\phi}) \weight_{\trop(L\star L^\perp)}(\sigma) \\
&= 2^d \deg(\widetilde{\phi}),
\end{align*}
where the next-to-last equality follows again from \cite[Theorem 3.12]{sturmfels2007tropical} and the last equality from the fact that $\trop(L\star L^\perp)=\trop(H_+)$ is a tropicalized linear space and so the weight of every maximal cone is 1.

Finally, \cite[Corollary 3.6.16]{Maclagan2015} tells us how to read off the degree of a variety from the tropicalization:
\[
 \deg((L^{-1})^\vee) = 2^d\deg(\widetilde{\phi})\deg(H_+) = 2^d\deg(\widetilde{\phi}). \qedhere
\]
\end{proof}

\subsection{The bipermutohedral fan and dual degrees}

In this section we recall some of the results from \cite{Ardila2022} which will be useful in our setting to compute the polar degrees of reciprocal linear spaces. We start out by motivating how the bipermutohedral variety and the conormal fan of a matroid, show up in our context. In this way we develop the mechanism that will be used in the proof of \Cref{thm:intro:polardegrees} to translate the intersection-theoretic problem of computing degrees to a combinatorial problem in terms of invariants of matroids.
We close this section by applying this mechanism to the special case of the dual variety to $L^{-1}$, see \Cref{thm: dualdegrees}. Here the corresponding combinatorial problem has already been solved in \cite{Ardila2022} and hence we obtain a closed formula for the dual degree of $L^{-1}$ for any linear space $L\subseteq \PP^n$. In particular this answers \Cref{con:dualdeg} affirmatively.

Let $E=\{0,\ldots,n\}$ and let $L\subseteq \PP^n$ be a linear space. Recall the map \begin{align*}
        \phi\colon\PP^n \times (\PP^n)^* \dashrightarrow\PP^n \times (\PP^n)^*, \qquad (x,y)\mapsto (x^{-1},xy).
\end{align*}
In \Cref{sec:swimming} we will relate the polar degrees of $L^{-1}$ to the bidegree of $\overline{\phi(L\times L^\perp)}\subseteq \PP^n \times (\PP^n)^*$, in the same way \Cref{prop: redStep1} relates the dual degree to the degree of $\widetilde{\phi}$. The ultimate goal of this section is to explain how to compute this bidegree. The main issue here is that the map $\phi$ is only a rational map and the multidegree of $\overline{\phi(L\times L^\perp)}$ is governed by how $L\times L^\perp$ intersects the indeterminacy locus of $\phi$. 

Before completely resolving the base locus, we first apply a sequence of blowups in both factors of $\PP^n \times (\PP^n)^*$ individually with the goal of replacing every coordinate subspace in a single factor by a divisor. This construction is a well-known tool in matroid intersection theory and leads to the \emph{permutohedral variety} $X(\Sigma_E)$. This smooth toric variety associated to the permutohedron resolves the Cremona map:
\[
\begin{tikzcd}
& X(\Sigma_E)\arrow[ld,"\alpha"'] \arrow[rd,"\overline{\text{inv}}"] &                    \\ \PP^n \arrow[rr, "\text{crem}", dashed] &                              & \PP^n.
\end{tikzcd}
\]
We refer for example to \cite{BEST2023} for details on the construction of $X(\Sigma_E)$. We get the following diagram. 
\begin{equation}
\label{eq:naiveblowupdiagram}
\begin{tikzcd}
& X(\Sigma_E) \times X(\Sigma_E) \arrow[ld,"\alpha_1\times \alpha_2"] \arrow[rd, dashed,"\overline{\text{inv}}\times \overline{\text{mult}}"] &                    \\ \PP^n \times (\PP^n)^* \arrow[rr, "\phi", dashed] &                              & \PP^n \times (\PP^n)^*.
\end{tikzcd}
\end{equation}
Here the map $\alpha_1\times \alpha_2$ is the birational map obtained by composing all of the blowup maps in the construction of $X(\Sigma_E)$ as an iterated blowup. The map $\overline{\text{inv}}\times \overline{\text{mult}}$ is the composition of $\alpha_1\times \alpha_2$ and $\phi$. 
Notice that this construction only resolves the map $\overline{\text{inv}}$, but not the map $\overline{\text{mult}}$; $\overline{\text{inv}}\times \overline{\text{mult}}$ is still only a rational map.

To resolve the base locus of $\overline{\text{inv}}\times \overline{\text{mult}}$, we replace $X(\Sigma_E)\times X(\Sigma_E)$ by the bipermutohedral variety $X(\Sigma_{E,E})$ of \cite{Ardila2022}. We recall the detailed construction of this toric variety in the later parts of this section. The strict transform of $L\times L^\perp$ gives a subvariety of $X(\Sigma_{E,E})$ which we denote $X_{L,L^\perp}$. We obtain the following extension of diagram \eqref{eq:naiveblowupdiagram} by the resolution of the map $\overline{\text{inv}}\times \overline{\text{mult}}$:
\begin{equation}
\label{eq:cool_diagram}
\begin{tikzcd}[column sep=0pt, row sep=large]
{X_{L,L^\perp}} \arrow[rr, "\iota_L", hook] \arrow[dd] &                                                                                                                                                                                                            & {X(\Sigma_{E,E})} \arrow[ld, "\pi \times \overline \pi"'] \arrow[rd, "\mathrm{inv}\times \mathrm{mult}"] &                                                    & {}  \\
& X(\Sigma_E) \times X(\Sigma_E) \arrow[d, "\alpha_1 \times \alpha_2"'] \arrow[rr, "\overline{\mathrm{inv}} \times \overline{\mathrm{mult}}", dashed] \arrow[rrd, dashed, "\overline{\text{inv}}\times \overline{\mathrm{mult}}"] &                                                                                                          & \PP^n \times (\PP^n)^* \arrow[d, no head, equals]  & {}                                           \\
L \times L^\perp \arrow[r, "\subseteq", phantom]               & \PP^n \times (\PP^n)^* \arrow[rr, "\phi", dashed]                                                                                                                              &                                                                                                          & \PP^n \times (\PP^n)^* &                                             
\end{tikzcd}
\end{equation}
The map $\pi\times \overline{\pi}$ is a birational toric map, defined in \cite[Proposition 2.11]{Ardila2022}. Indeed, on the level of fans, the fan $\Sigma_{E,E}$ is obtained from $\Sigma_E\times \Sigma_E$ by subdividing some two-dimensional cones and choosing a suitable fan structure. In this way $\pi\times \overline{\pi}$ is induced by the identity map on fans. In \Cref{sec:triathlon} we will use this diagram and the push-pull-formula to translate the computation of the multidegrees of $\overline{\phi(L\times L^\perp)}$ into an intersection problem on $X(\Sigma_{E,E})$.

Now we give a short overview on the fan $\Sigma_{E,E}$, the Chow ring $A^\bullet(X(\Sigma_{E,E}))$ and some classes in it, which are relevant for our proof. For a much more detailed exposition we refer the interested reader to \cite{Ardila2022}.

Let $N_{E,E}\coloneq N_E\oplus N_E\coloneq \RR^E/\RR  e_E\oplus \RR^E/\RR f_E $ where $ e_E$ and $f_E$ denote the all-ones vector in the first and second factor respectively. The bipermutohedral fan $\Sigma_{E,E}$ is a complete fan in $N_{E,E}$. We will now describe its rays and its cones. 
\begin{itemize}
\item \underline{Rays:} Let $S,T\subseteq E$ be non-empty such that $S\cup T=E$ and $S\cap T\neq E$. We write $S|T$ for the ordered pair $(S,T)$ and call it a \emph{bisubset of $E$}.  Every bisubset of $E$ defines a ray of $\Sigma_{E,E}$ with primitive ray generator \begin{align*}
     e_{S|T}:= e_S+f_T=\sum_{i\in S} e_i+\sum_{j\in T}f_j 
\end{align*}
where $ e_i$ and $f_j$ denote the class of the standard basis vectors of $\RR^E$ in the first and second coordinate of $N_{E,E}$, respectively.
\item \underline{Cones:} A collection of rays indexed by bisubsets $S_1|T_1,\ldots S_k|T_k$ forms a cone in the bipermutohedral fan $\Sigma_{E,E}$ if and only if, up to reordering the indices, the following three conditions are satisfied:
\[
S_1\subseteq S_2\subseteq \cdots \subseteq S_k, \qquad T_1\supseteq T_2\supseteq \cdots \supseteq T_k, \qquad \bigcup_{i=1}^kS_i\cap T_i \neq E.
\]
\end{itemize}
The variety $X(\Sigma_{E,E})$ in \eqref{eq:cool_diagram} is by construction the toric variety associated with the fan $\Sigma_{E,E}$. The name bipermutohedral fan comes from the fact that the two induced subfans of the subsets of rays $\set{e_{S\vert E}|\emptyset \subsetneq S \subsetneq E}$ and   $\set{e_{E\vert T}|\emptyset \subsetneq T \subsetneq E}$ are two copies of the permutohedral fan $\Sigma_E$. In particular, the two projections $N_{E,E}\to N_E$ induce the map $\pi\times \overline{\pi}\colon X(\Sigma_{E,E})\to X(\Sigma_E)\times X(\Sigma_E)$.  
\begin{lemma}
    The a priori rational maps $\pi\times \overline{\pi}$ and $\textup{inv}\times \textup{mult}$ in diagram \eqref{eq:cool_diagram} are regular.
\end{lemma}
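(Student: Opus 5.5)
Both maps are induced by lattice maps of fans, so the plan is to use the standard toric criterion. A lattice homomorphism $\varphi\colon N\to N'$ induces a regular toric morphism $X(\Sigma)\to X(\Sigma')$ exactly when every cone of $\Sigma$ is mapped by $\varphi_\RR$ into some cone of $\Sigma'$. The task therefore reduces to a purely combinatorial check for the four relevant linear maps on $N_{E,E}=N_E\oplus N_E$:
\begin{itemize}
\item the two projections, which give $\pi$ and $\overline\pi$;
\item $(u,v)\mapsto -u$, which gives $\mathrm{inv}$, the Cremona map on the torus;
\item $(u,v)\mapsto u+v$, which gives $\mathrm{mult}$.
\end{itemize}
The targets are $\Sigma_E$ in the first two cases and the fan of $\PP^n$ in the last two. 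Since $\Sigma_{E,E}$ is simplicial, I only need the images of ray generators and then check that all rays of a given cone land in a common target cone.

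For $\pi$, a cone with rays $S_1|T_1,\dots,S_k|T_k$, ordered so that $S_1\subseteq\dots\subseteq S_k$, maps into $\pos(e_{S_1},\dots,e_{S_k})$. Here $e_{S_i}=0$ whenever $S_i=E$, and the remaining $S_i$ form a chain of proper nonempty subsets, so the image lies in a cone of $\Sigma_E$. The case of $\overline\pi$ is symmetric, using the reversed chain of the $T_i$. Hence $\pi\times\overline\pi$ is regular.

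For $\mathrm{inv}$, I would not work with fans directly but factor the map as $\overline{\mathrm{inv}}\circ\pi$. Here $\overline{\mathrm{inv}}\colon X(\Sigma_E)\to\PP^n$ is regular by the resolution of the Cremona map recalled above, and $\pi$ was just shown to be regular.

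For $\mathrm{mult}$, the fan of $\PP^n$ in $\RR^E/\RR e_E$ has as its cones $\pos(e_i : i\in I)$ for proper subsets $I\subsetneq E$. The ray $S|T$ maps to $e_S+e_T=e_{S\cap T}+e_E\equiv e_{S\cap T}$, using $S\cup T=E$. The cone on $S_1|T_1,\dots,S_k|T_k$ therefore maps into $\pos(e_i : i\in\bigcup_j S_j\cap T_j)$. This is a cone of the fan of $\PP^n$ precisely because of the third cone condition, $\bigcup_j S_j\cap T_j\neq E$.

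I expect this last verification to be the crux. It is exactly where the bipermutohedral fan improves on $\Sigma_E\times\Sigma_E$: in the product fan, two chains can produce intersections whose union covers $E$. Finally, I would confirm that each toric morphism restricts on the dense torus $\TT^n\times\TT^n$ to $(x,y)\mapsto(x^{-1},xy)$ composed with the blow-down. This shows the regular maps agree with the rational maps in \eqref{eq:cool_diagram}.
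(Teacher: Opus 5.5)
Your proposal is correct and follows the paper's argument. The paper also obtains $\mathrm{inv}=\overline{\mathrm{inv}}\circ\pi$ from the regularity of the permutohedral resolution $\overline{\mathrm{inv}}$ of the Cremona map. For $\pi$, $\overline{\pi}$ and $\mathrm{mult}$ the paper simply cites \cite[Proposition 2.11]{Ardila2022}, whereas you check the fan-compatibility directly, and that check is sound: in particular, $\mathrm{mult}$ is a morphism of fans onto the fan of $\PP^n$ with rays $e_i$ exactly because $\bigcup_j S_j\cap T_j\neq E$.
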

\begin{proof}
    The first factor $\text{inv}$ already has no base locus on the level of products of permutohedra, i.e. the map $\overline{\text{inv}}$ in diagram \eqref{eq:cool_diagram} is regular. The second factor and the two maps $\pi,\overline{\pi}$ are regular by \cite[Proposition 2.11]{Ardila2022}.
\end{proof}
Below we give diagram \eqref{eq:cool_diagram} again, but this time on the level of fans instead of toric varieties. In this way all maps become linear. In analogy to the version for varieties we use dashed arrows to indicate that a linear map is not a morphism of fans, i.e. does not map cones into cones. We write $\Gamma$ for the fan in $N_E$ that is the normal fan of the standard simplex $\text{Conv}(e_i\mid i\in E)$.
\begin{equation}
\label{eq:cool_diagram_fans}
\begin{tikzcd}
& {\Sigma_{E,E}} \arrow[ld, "{(x,y)}"'] \arrow[rd, "{(-x,x+y)}"] &                                                    \\
\Sigma_E \times \Sigma_E \arrow[d, "{(x,y)}"] \arrow[rr, "{(-x,x+y)}", dashed] \arrow[rrd, "{(-x,x+y)}", dashed] &                                                                & \Gamma\times \Gamma \arrow[d, no head, equals] \\
\Gamma \times \Gamma \arrow[rr, "{(-x,x+y)}", dashed]                                              &                                                                & \Gamma \times \Gamma                              
\end{tikzcd}
\end{equation}
Here the labels on the maps indicate where this map sends a point $(x,y)\in N_E\oplus N_E$. 
Compared to diagram \eqref{eq:cool_diagram}, the above diagram for fans is missing the product of linear spaces $L\times L^\perp$ and its strict transform in $X(\Sigma_{E,E})$. The reason is that these subvarieties are not invariant under the torus action and the inclusions $\iota_L$ and $L\times L^\perp \hookrightarrow \PP^n \times (\PP^n)^*$ are not toric. To place $L\times L^\perp$ into \eqref{eq:cool_diagram_fans} we will replace it by the \emph{conormal fan of $M$} defined in \cite[Section 3.4]{Ardila2022}. This fan is a subfan of $\Sigma_{E,E}$ with the property that its class in the Chow ring of $\Sigma_{E,E}$ agrees with the class of the strict transform $X_{L,L^\perp}$. In particular, for all intersection-theoretic purposes, we can replace $[X_{L,L^\perp}]$ by $[\Sigma_{M,M^\perp}]$. We will make this precise now.

\begin{definition}
\label{def: conormal fan}
Let $M$ be a loopless and coloopless matroid on $E$. The \emph{conormal fan of $M$} is the induced subfan $\Sigma_{M,M^\perp}$ of $\Sigma_{E,E}$ whose support is
\begin{align*}
|\Sigma_{M,M^\perp}|=\trop(M)\times \trop(M^\perp).
\end{align*}
The rays of $\Sigma_{M,M^\perp}$ are labeled by bisubsets $S|T$ such that $S$ is a flat of $M$ and $T$ is a flat of $M^\perp$, we say that $S|T$ is a \emph{biflat}.
\end{definition}

Next we describe the Chow rings of all appearing toric varieties. We generally switch between three equivalent descriptions of these rings: piecewise polynomial functions, torus orbit closures and Minkowski weights. For more details we refer to \cite[Chapter 12.5]{CLS} or the survey paper \cite{ArdilaMantilla2024}.

\begin{itemize}
    \item \underline{Piecewise linear functions.} For a complete rational simplicial fan $\Sigma\subseteq N_{E,E}$ the Chow ring of the associated smooth complete toric variety $A^\bullet(X(\Sigma))$ (with rational coefficients) is a quotient of the subring of rational piecewise polynomial functions on $\Sigma$. Here a function $f:N_{E,E}\to \RR$ is said to be rational if it takes rational values at the lattice points of $N_{E,E}$ and it is said to be piecewise polynomial on $\Sigma$ if the restriction of $f$ to any cone of $\Sigma$ is given by a polynomial in $\QQ[z_1-z_0,\ldots,z_n-z_0,w_1-w_0,\ldots,w_n-w_0]$. We take its quotient by the ideal generated by all globally polynomial functions on $\Sigma$.

    In this description pulling back along a map $X(\Sigma_1)\to X(\Sigma_2)$ corresponds to composing a piecewise polynomial function with the linear function of fans $\Sigma_1\to \Sigma_2$.
    \item \underline{Torus orbit closures.} Fix again a complete rational simplicial fan $\Sigma\subseteq N_{E,E}$. The Chow ring $A^\bullet(X(\Sigma))$  (with rational coefficients) is the quotient ring \begin{align}
    \label{eq:torusorbitchow}
A^\bullet(X(\Sigma))=\QQ[x_\rho|\rho \in \Sigma(1)]/(I+J).
    \end{align}
    Here $I$ is the linear ideal \begin{align*}
        I=\left\langle \textstyle\sum_{\rho \in \Sigma(1)}l(\rho)x_\rho \,\middle|\, l \in N_{E,E}^*  \right\rangle
    \end{align*}
    Here $l\in N_{E,E}^*$ means a rational linear function on $N_{E,E}$ and by $l(\rho)$ we mean the value of $l$ at the first lattice point of the ray $\rho$. The ideal $J$ is the monomial ideal generated by all non-cones:\begin{align*}
        J=\langle x_{\rho_1}\cdots x_{\rho_s} \mid s\in \NN,\,  \pos(\rho_1,\ldots,\rho_s) \notin \Sigma(s) \rangle
    \end{align*}

    In this description the degree map is easy to describe. The degree $2n$ part $A^{2n}(X(\Sigma))$ is a one-dimensional $\QQ$-vector space and the class of the monomial $x_{\rho_1}\cdots x_{\rho_{2n}}$ is nonzero and the same for every maximal cone $\pos(\rho_1,\ldots,\rho_{2n})$ of $\Sigma$. The degree map $\int_{X(\Sigma)}:A^\bullet(X(\Sigma))\to \QQ$ is the unique $\QQ$-linear map which is zero on classes of degree not equal to $2n$ and takes value one on any monomial $x_{\rho_1}\cdots x_{\rho_{2n}}$, where $\text{cone}(\rho_1,\ldots,\rho_{2n})\in \Sigma(2n)$.

\end{itemize}

To translate between these two descriptions note first that, since the Chow ring is generated in degree one, it suffices to explain how to translate between the degree one parts of both descriptions. A piecewise linear function $l=(l_\sigma)_\sigma$ is sent to the linear polynomial $\sum_{\rho \in \Sigma(1)}l(\rho)x_\rho$ where $l(\rho)$ is the linear function $l_\sigma$ for any $\sigma$ containing $\rho$, evaluated at the first lattice point of $\rho$.

\begin{itemize} 
    \item \underline{Minkowski weights.} Let $\Sigma \subseteq N_{E,E}$ be complete, rational and simplicial. Here we give an alternative description of the Chow groups, i.e. the $\QQ$-vector spaces $A^k(X(\Sigma))$, without describing the ring structure. By Poincaré duality the intersection pairing \begin{align*}
    \ChowGrp^k(X(\Sigma))\times \ChowGrp^{2n-k}(X(\Sigma))\to \QQ, \qquad (V_1,V_2)\mapsto \int_{X(\Sigma)} V_1V_2
    \end{align*}
    is non-degenerate and induces an isomorphism $A^k(X(\Sigma))\cong \text{Hom}(\ChowGrp^{2n-k}(X(\Sigma)),\QQ)$. By the description above, $A^{2n-k}(X(\Sigma))$ is generated by monomials corresponding to $2n-k$-dimensional cones in $\Sigma$, hence an element of $\text{Hom}(A^{2n-k}(X(\Sigma)),\QQ)$ can be viewed as a function from the $2n-k$-dimensional cones of $\Sigma$ to $\QQ$. The relations among the monomials in $A^{2n-k}(X(\Sigma))$ restrict the functions that can appear in this way to $2n-k$-dimensional \emph{Minkowski weights}, which we now define. 
    
    A function $f \colon \Sigma(2n-k)\to \QQ$ is a Minkowski weight if for every cone $\sigma \in \Sigma(2n-k-1)$ we have \begin{align*}
        \sum_{\sigma'\succ \sigma}f(\sigma')\rho_{\sigma\setminus \sigma'}\in \text{Span}_\RR(\sigma)
    \end{align*}
    where $\rho_{\sigma\setminus \sigma'}$ is the unique primitive ray generator belonging to $\sigma'$ but not to $\sigma$.

    This viewpoint is useful to associate a class in the Chow ring to a weighted balanced subfan $\Sigma'$ of $\Sigma$ of dimension $2n-k$. Indeed $\Sigma'$  gives a Minkowski weight on $\Sigma(2n-k)$ by assigning weight 0 to all cones that do not belong to $\Sigma'$. We write $[\Sigma']$ for this class.
\end{itemize}

The following technical lemma allows us to replace the strict transform of $L\times L^\perp$ in $X(\Sigma_{E,E})$ by the class $[\Sigma_{M,M^\perp}]\in A^{n+1}(X(\Sigma_{E,E}))$. This last class is defined via the Minkowski weight description above by assigning constant weight 1 to all cones in $\Sigma_{E,E}(n-1)$ that belong to $\Sigma_{M,M^\perp}$ and weight zero to all other cones. This in fact makes $\Sigma_{M,M^\perp}$ a balanced weighted fan by \cite[Proposition 3.7]{Ardila2022}. 
\begin{lemma}
\label{lem:classofstricttransform}
    The class of the strict transform $X_{L,L^\perp}$  of $L\times L^\perp$ in $\ChowGrp^{n+1}(X(\Sigma_{E,E}))$ is $[\Sigma_{M,M^\perp}]$.
\end{lemma}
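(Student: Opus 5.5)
The plan is to use the standard correspondence between subvarieties of a toric variety meeting all torus orbits properly and their tropicalizations. By the fan displacement rule (Fulton–Sturmfels), if $Y\subseteq X(\Sigma)$ is the closure of a subvariety $Y^\circ$ of the dense torus and $\trop(Y^\circ)$ is a union of cones of $\Sigma$, then $[Y]=[\trop(Y^\circ)]$ as Minkowski weights. This holds provided the closure $\overline{Y^\circ}$ meets every torus orbit $O(\sigma)$ properly; this is \cite[Theorem 6.7.7]{Maclagan2015}, or Katz–Payne. The weights are the tropical multiplicities. We apply it with $Y^\circ=(L\cap\TT^n)\times(L^\perp\cap\TT^n)\subseteq \TT^n\times\TT^n$. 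This is the torus of $X(\Sigma_{E,E})$, because $\pi\times\overline{\pi}$ is an isomorphism on tori. The closure of $Y^\circ$ in $X(\Sigma_{E,E})$ is exactly the strict transform $X_{L,L^\perp}$: both are irreducible, and they agree over the open dense torus. Here we use that $M$ is loopless and coloopless, so that $L$ and $L^\perp$ meet their tori.

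The key steps are as follows. First, by \Cref{lem: tropLinSpaces}-type reasoning for products, $\trop(Y^\circ)=\trop(L)\times\trop(L^\perp)=|\Sigma_M|\times|\Sigma_{M^\perp}|$. By \Cref{def: conormal fan}, this is the support of the subfan $\Sigma_{M,M^\perp}$ of $\Sigma_{E,E}$, so the tropicalization is supported on a union of cones of the ambient fan. Second, for the weights: tropicalizations of linear spaces have all weights $1$ on their Bergman fan. A product of tropical varieties carries the product weights on cells $\tau\times\tau^*$. The maximal cones of $\Sigma_{M,M^\perp}$ subdivide the products $C_{\mathcal F}\times C_{\mathcal G}$ of maximal Bergman cones, and refinement preserves weights. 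So every maximal cone of $\Sigma_{M,M^\perp}$ gets weight $1$. This agrees with the Minkowski weight defining $[\Sigma_{M,M^\perp}]$, which is balanced by \cite[Proposition 3.7]{Ardila2022}.

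The third step is the properness/transversality hypothesis, and I expect it to be the main obstacle. We need that $\overline{Y^\circ}\cap O(\sigma)$ is empty when $\sigma\notin\Sigma_{M,M^\perp}$, and has the expected dimension otherwise. The standard criterion is that this holds whenever $\trop(Y^\circ)$ is a union of cones of $\Sigma$ (Tevelev's tropical compactification criterion, \cite[Proposition 6.4.x]{Maclagan2015}). Then $\overline{Y^\circ}\cap O(\sigma)\neq\emptyset$ iff $\operatorname{relint}\sigma\cap\trop(Y^\circ)\neq\emptyset$. The dimension of the intersection equals $\dim \trop(\operatorname{in}_w Y^\circ)/\operatorname{span}\sigma$ for $w\in\operatorname{relint}\sigma$. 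Because $\operatorname{in}_w$ of a product of linear spaces is again a product of linear spaces, this dimension is $2n-(n+1)-\dim\sigma$, as required. I would first try to cite this general result directly. If some subtlety about non-simplicial refinements arises, I would fall back to checking on each chart that the initial degenerations are products $L_{\mathcal F}\times L^\perp_{\mathcal G}$ of linear spaces of the correct dimension.

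With these three steps, the Fulton–Sturmfels / Katz formula gives $[X_{L,L^\perp}]=[\Sigma_{M,M^\perp}]$ in $\ChowGrp^{n+1}(X(\Sigma_{E,E}))$. This is an alternative to the possible shortcut of citing the analogous statement in \cite{Ardila2022}, where the conormal fan was designed precisely as the tropical model of $\Con$-type varieties.
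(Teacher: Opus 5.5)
Your proposal is correct and follows essentially the same route as the paper. You identify $X_{L,L^\perp}$ with the closure of $(L\times L^\perp)\cap(\TT^n\times\TT^n)$, observe that its tropicalization is supported on the subfan $\Sigma_{M,M^\perp}$ of the complete fan $\Sigma_{E,E}$, and conclude with \cite[Theorem 6.7.7]{Maclagan2015}. There are two minor differences: the paper secures the proper-intersection hypothesis by noting that $\Sigma_{M,M^\perp}$ refines the Gr\"obner fan, so the closure is a flat tropical compactification (\cite[Propositions 6.4.17 and 6.4.14]{Maclagan2015}), whereas you use the union-of-cones criterion with an initial-degeneration dimension count; and you check explicitly that all tropical weights equal $1$, which the paper leaves implicit.
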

The proof imitates \cite[Lemma 9.8]{BEST2023} for the bipermutohedral fan. For details on the terminology used in this proof we refer to \cite{Maclagan2015}. 
\begin{proof}
On the dense torus of $\PP^n \times (\PP^n)^*$, the tropicalisation of $X^\circ \coloneq (L\times L^\perp)\cap (\TT^n\times \TT^n)$ is supported in $\trop(M)\times \trop(M^\perp)$. We endow this set with the fan structure given by the conormal fan $\Sigma_{M,M^\perp}$. This refines the Gröbner fan $\Sigma_{gb}$ of $X^\circ$ due to \cite[Exercise 4.7.(7)]{Maclagan2015}. By \cite[Proposition 6.4.17]{Maclagan2015} the closure of $X^\circ$ in $X(\Sigma_{gb})$ is a flat tropical compactification. This property is preserved under refinement of $\Sigma_{gb}$ by \cite[Proposition 6.4.14]{Maclagan2015}, hence the closure of $X^\circ$ inside $X(\Sigma_{M,M^\perp})$ is also a flat tropical compactification. On the other hand, under the inclusion $X(\Sigma_{M,M^\perp})\hookrightarrow X(\Sigma_{E,E})$ induced by the inclusion of fans, the closure of $X^\circ$ inside $X(\Sigma_{M,M^\perp})$ is $X_{L,L^\perp}$, i.e. the strict transform of $L\times L^\perp$ in $X(\Sigma_{E,E})$. Since $\Sigma_{E,E}$ is a completion of $\Sigma_{M,M^\perp}$, we conclude by \cite[Theorem 6.7.7]{Maclagan2015}, this finishes the proof.
\end{proof}

We also work with the Chow ring of the fan $\Sigma_{M,M^\perp}$, which is not complete; in fact the dimension of this fan is only $n-1$ inside the $2n$-dimensional ambient space $N_{E,E}$. In this case we simply define $A^\bullet(\Sigma_{M,M^\perp})$ to be the quotient ring in \eqref{eq:torusorbitchow}, which is well-defined also for non-complete fans. Concretely, the variables of this ring are $x_{F|G}$ for $F|G$ a biflat of $M$ (\Cref{def: conormal fan}), and a square-free monomial is nonzero if and only if it corresponds to a biflag. This combinatorially defined Chow ring still admits a degree map $\int_{\Sigma_{M,M^\perp}}$ since its top-degree part $A^{n-1}(\Sigma_{M,M^\perp})$ is one-dimensional, see \cite[Proposition 3.7]{Ardila2022}.
Notice that $A^\bullet(\Sigma_{M,M^\perp})$ is the quotient of $A^\bullet(X(\Sigma_{E,E}))$ by the monomial ideal generated by all monomials corresponding to cones in $\Sigma_{E,E}$ which are not cones in $\Sigma_{M,M^\perp}$. The corresponding quotient map gives us a ring homomorphism
\begin{align}
\nonumber
\iota^*:A^\bullet(X(\Sigma_{E,E}))&\to A^\bullet(\Sigma_{M,M^\perp}) \\
x_{\rho_1}\cdots x_{\rho_k}&\mapsto \begin{cases}
   x_{\rho_1}\cdots x_{\rho_k} & \text{ if } \pos(\rho_1,\ldots,\rho_k) \in \Sigma_{M,M^\perp} \\
    0 & \text{ else}
\end{cases}
\end{align}
which we call the pullback map from $X(\Sigma_{E,E})$ to $\Sigma_{M,M^\perp}$. 

Looking again at diagram \eqref{eq:cool_diagram}, to compute the bidegree of the image of $L\times L^\perp$ under $\phi$, we will need to pull back the two hyperplane classes $[H \times (\PP^n)^*]$ and $[\PP^n\times H]$ following the maps
\[
\ChowGrp^1(\PP^n\times (\PP^n)^*) \overset{(\text{inv} \times \text{mult})^*}{\longrightarrow} \ChowGrp^1(X(\Sigma_{E,E})) \overset{\iota^*}{\longrightarrow} \ChowGrp^1(\Sigma_{M,M^\perp})
\]
We describe the result  in the following lemma.
\begin{lemma}
\label{lem:gammadelta}
     Let $\widetilde{\gamma}\coloneq \iota^*(\textup{inv}\times \textup{mult})^*[H \times (\PP^n)^*]$. For any choice of $i\in E$ this class is given by \begin{align*}
\widetilde{\gamma}=\sum_{\substack{F|G \text{ biflat} \\ i \notin F}}x_{F|G}.
    \end{align*}
    Let $\delta\coloneq\iota^*(\textup{inv}\times \textup{mult})^*[\PP^n\times H]$. For any choice of $i\in E$ this class is given by 
    \begin{align*}
\delta=\sum_{\substack{F|G \text{ biflat} \\ i \in F\cap G}}x_{F|G}.
    \end{align*}
\end{lemma}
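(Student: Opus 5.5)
We compute both classes in the piecewise-linear-function description of $\ChowGrp^\bullet(X(\Sigma_{E,E}))$ and then translate to the torus-orbit description. A hyperplane class on $\PP^n$ is represented by the piecewise linear function on the fan $\Gamma$ of $\PP^n$ given by $u\mapsto \max_j u_j - u_i$ for a fixed $i\in E$. Here we use the convention matching the earlier description, where the ray of $\Gamma$ generated by $e_j$ corresponds to the divisor $\{x_j=0\}$; if the paper's sign convention is the opposite, we replace max by min and swap signs throughout. Different choices of $i$ differ by a global linear function, which is the claimed independence of $i$. By \eqref{eq:cool_diagram_fans}, the pullback under $\mathrm{inv}\times\mathrm{mult}$ is obtained by precomposing with the linear maps $(x,y)\mapsto -x$ and $(x,y)\mapsto x+y$ respectively. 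This gives the two functions
\[
\Phi_1(x,y)=\max_j(-x_j)+x_i,\qquad \Phi_2(x,y)=\max_j(x_j+y_j)-(x_i+y_i).
\]
These functions are piecewise linear on $\Sigma_{E,E}$, because the maps are regular by the lemma above. Passing to the torus-orbit description, the class is $\sum_\rho \Phi(\rho)\,x_\rho$, where $\Phi$ is evaluated at the primitive generator $e_{S|T}=e_S+f_T$.

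\textbf{Evaluating $\Phi_1$.} At $e_{S|T}$ we get $\max_j(-(e_S)_j)+(e_S)_i$. Since $S\neq E$ is forced for rays of the conormal fan, some coordinate of $e_S$ vanishes and $\max_j(-(e_S)_j)=0$. Hence $\Phi_1(e_{S|T})=[i\in S]$, and the pullback is $\sum_{i\in S}x_{S|T}$.

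Some care is needed because $S$ may equal $E$ in $\Sigma_{E,E}$ (for instance the ray $E|T$), in which case the maximum is $-1$ and the value is $0$. For biflats the case $S=E$ does occur ($G\neq E$), and then $\Phi_1=1-1=0$, while for $S\neq E$ we get $[i\in S]$. The formula to match is $\sum_{i\notin F}x_{F|G}$. This differs from $\sum[i\in F]x_{F|G}$ with $F\ne E$ by the image of a global linear function. Concretely, the linear function $l(x,y)=-x_i$ gives the relation $\sum_{F|G} [i\in F]\,x_{F|G}=0$ in the Chow ring. Using this relation, and also the function $\sum$-type relation coming from $e_E=0$ to handle the $F=E$ terms, we rewrite the class into the form $\sum_{i\notin F}x_{F|G}$. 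The cleanest route is to use $u\mapsto \max_j u_j-u_i$ replaced by its equivalent $-\min$ form, so that one directly gets $\Phi_1(e_{S|T})=1-[i\in S]=[i\notin S]$ whenever $S\neq E$, and $0$ when $S=E$. That is exactly $[i\notin F]$. So the first claim reduces to choosing the representative whose values on the rays are $[i\notin S]$, and checking that the two representatives differ by a linear function.

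\textbf{Evaluating $\Phi_2$.} At $e_{S|T}$ the vector $x+y$ has $j$-th coordinate $[j\in S]+[j\in T]\in\{0,1,2\}$, with the value $1$ attained because $S\cup T=E$. The value $2$ is attained exactly when $S\cap T\neq\emptyset$. We use the complementary representative $(x+y)_i-\min_j(x+y)_j$. Since $S\cup T=E$ gives $\min_j(x+y)_j\ge 1$, and $S\cap T\neq E$ gives $\min_j(x+y)_j=1$, this evaluates to $[i\in S]+[i\in T]-1=[i\in S\cap T]$. Applying $\iota^*$ then kills all non-biflat rays, which yields $\delta$.

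\textbf{Main obstacle.} The only delicate point is the sign and orientation convention: which piecewise linear function on $\Gamma$ represents the hyperplane class (the max form or the min form), and how the inversion flips it. We resolve this by fixing the convention via the standard fact that on $\PP^n$ the divisor $\{x_i=0\}$ corresponds to the support function whose ray values are $[\rho=e_i]$. We then check that the resulting functions $\Phi_1,\Phi_2$ are genuinely linear on each cone of $\Sigma_{E,E}$. This linearity holds because on a cone the chains $S_1\subseteq\cdots$ and $T_1\supseteq\cdots$, together with $\bigcup S_k\cap T_k\ne E$, fix a common minimizing index. Finally, the restriction to the conormal fan is simply the quotient $\iota^*$.
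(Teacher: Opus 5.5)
Your overall strategy is the paper's: represent the hyperplane classes by piecewise linear functions on $\Gamma$, precompose with $(x,y)\mapsto(-x,x+y)$, evaluate at $e_{S|T}=e_S+f_T$, and apply $\iota^*$. Your computation of $\delta$ via $(x+y)_i-\min_j(x+y)_j$, with value $[i\in S\cap T]$, is correct and is exactly the paper's. The independence of $i$ is also handled correctly.

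The treatment of $\widetilde{\gamma}$, however, contains a genuine error. You take $\Gamma$ to have rays $e_j$, which is the paper's convention. This is in fact forced: $x+y$ must send cones of $\Sigma_{E,E}$ into cones of $\Gamma$, and for $E=\{0,1,2\}$ the cone spanned by $e_{\{0\}|E}$ and $e_{\{0,1\}|\{1,2\}}$ is sent onto $\pos(e_0,e_1)$. With this convention, the function $u\mapsto\max_j u_j-u_i$ you start with is not piecewise linear on $\Gamma$. Its ray values are $1-[j=i]$, contradicting the ``standard fact'' you invoke at the end. The support function of $\{x_i=0\}$ is uniquely $\max_j(u_i-u_j)=u_i-\min_j u_j$, so there is no sign convention to choose. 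Your $\Phi_1=x_i-\min_j x_j$ is the pullback of $[H]$ along the blow-down $\alpha_1\circ\pi$ \emph{without} inversion. That is the Ardila--Denham--Huh class $\gamma=\sum_{i\in F\neq E}x_{F|G}$, not $\widetilde\gamma$.

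Your reconciliation of the two classes fails. The function $l(x,y)=-x_i$ is not a linear functional on $N_E=\RR^E/\RR e_E$; only differences $z_a-z_b$ are. The genuine relations therefore only show that $\sum_{F|G}[i\in F]\,x_{F|G}$ is independent of $i$, not that it vanishes. Likewise, $\max_j u_j-u_i$ and $u_i-\min_j u_j$ differ by $\max_j u_j+\min_j u_j-2u_i$, which is not linear. So the step ``checking that the two representatives differ by a linear function'' cannot be carried out.

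The two classes genuinely differ. For $M=U_{2,3}$ the biflats are $\{j\}|E$, and the relations force all $x_{\{j\}|E}$ to be equal of degree $1$. Then $\sum_{i\in F}x_{F|G}$ has degree $1$, while $\sum_{i\notin F}x_{F|G}$ has degree $2=\deg L^{-1}$ (a conic). More generally, $\int\gamma^{d}\delta^{n-d-1}=1$ by Ardila--Denham--Huh, whereas $\int\widetilde\gamma^{d}\delta^{n-d-1}=\mu^+(M)$.

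The fix is to start from $\max_j(u_i-u_j)$. Precomposed with $-x$ it becomes $\max_j x_j-x_i$, whose value at every $e_{S|T}$ (including $S=E$) is $1-[i\in S]=[i\notin S]$. After $\iota^*$ this is the claimed formula, exactly as in the paper.
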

\begin{proof}
    Write $z_0,\ldots,z_n,w_0,\ldots,w_n$  for the standard basis of linear functions on $\RR^E\oplus \RR^E$, so that linear functionals on $N_{E,E}$ are given by linear combinations of these functions such that the coefficients of $z_0,\ldots,z_n$ sum to zero and the coefficients of $w_0,\ldots,w_n$ sum to zero.
    We first compose the piecewise linear maps $\max_{j\in E}(z_i-z_j)$ and $\max_{j\in E}(w_i-w_j)$, corresponding to $[H\times (\PP^n)^*]$ and $[\PP^n \times H]$ respectively, with the linear map $(-z,z+w)$ and then evaluate the result on the rays of $\Sigma_{E,E}$. After applying $\iota^*$ we obtain the claimed representations.
\end{proof}

The following proposition is the main result of this section and will be used in the computation of the polar degrees of reciprocal linear spaces.
\begin{proposition}\label{prop:howtocomputedegrees}
    Let $L\subseteq \PP^n$ be such that $M$ is loopless and coloopless. The multidegree of $\overline{\phi(L\times L^\perp)}$ can be computed in the Chow ring $A^\bullet(\Sigma_{M,M^\perp})$ using the classes $\widetilde{\gamma}$ and $\delta$. More precisely for any $k=0,\ldots, d$ we have \begin{align*}
        \int_{\PP^n \times (\PP^n)^*}[\overline{\phi(L\times L^\perp)}]\cdot[H\times (\PP^n)^*]^k\cdot[\PP^n\times H]^{n-k-1}=\int_{\Sigma_{M,M^\perp}}\widetilde{\gamma}^k\delta^{n-k-1}.
    \end{align*}
\end{proposition}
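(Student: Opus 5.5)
The plan is to push the computation onto $X(\Sigma_{E,E})$ via diagram \eqref{eq:cool_diagram} and then restrict to the conormal fan. Write $\Phi \coloneq \mathrm{inv}\times\mathrm{mult}\colon X(\Sigma_{E,E})\to \PP^n\times(\PP^n)^*$, which is a regular morphism by the lemma above. By construction $\Phi$ restricted to the open torus of $X(\Sigma_{E,E})$ agrees with $\phi\circ(\alpha_1\times\alpha_2)\circ(\pi\times\overline\pi)$, and the latter composition of blowups is an isomorphism over the dense torus. Since $L\times L^\perp$ meets the torus $\TT^n\times\TT^n$ (as $M$ is loopless and coloopless, neither $L$ nor $L^\perp$ lies in a coordinate hyperplane), $\Phi$ maps the strict transform $X_{L,L^\perp}$ onto $\overline{\phi(L\times L^\perp)}$.

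\textbf{Step 1: projection formula.} Apply the projection formula to $\Phi\circ\iota_L$:
\[
\int_{X(\Sigma_{E,E})}[X_{L,L^\perp}]\cdot \Phi^*\bigl(h_1^k h_2^{n-k-1}\bigr) = \int_{\PP^n\times(\PP^n)^*}(\Phi\circ\iota_L)_*[X_{L,L^\perp}]\cdot h_1^kh_2^{n-k-1}.
\]
The pushforward equals $\deg(\Phi|_{X_{L,L^\perp}})\cdot[\overline{\phi(L\times L^\perp)}]$ if the image has dimension $\dim X_{L,L^\perp}=n-1$, and is zero otherwise. Here we must be careful about what the statement means. In the non-generically-finite case the left-hand side should be read as the degree-zero pushforward, and we interpret the identity with $[\overline{\phi(\cdot)}]$ meaning the pushforward cycle $\phi_*[L\times L^\perp]$. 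This includes the degree of $\phi|_{L\times L^\perp}$. That degree is $1$: the first coordinate $x^{-1}$ already determines $x$, and then $xy$ determines $y$ on the torus. So $\phi|_{L\times L^\perp}$ is birational onto its image, the image has dimension $n-1$, and the pushforward is exactly the class of the closure.

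\textbf{Step 2: replace the strict transform by the conormal fan.} By \Cref{lem:classofstricttransform}, $[X_{L,L^\perp}]=[\Sigma_{M,M^\perp}]$ in $\ChowGrp^{n+1}(X(\Sigma_{E,E}))$. It remains to show
\[
\int_{X(\Sigma_{E,E})}[\Sigma_{M,M^\perp}]\cdot \beta = \int_{\Sigma_{M,M^\perp}}\iota^*\beta
\]
for every $\beta\in \ChowGrp^{n-1}(X(\Sigma_{E,E}))$. We then apply this with $\beta=\Phi^*(h_1)^k\Phi^*(h_2)^{n-k-1}$ and use that $\iota^*$ is a ring map, so that $\iota^*\beta=\widetilde\gamma^k\delta^{n-k-1}$ by \Cref{lem:gammadelta}.

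To verify the displayed identity, write $\beta$ as a linear combination of squarefree monomials $x_{\rho_1}\cdots x_{\rho_{n-1}}$ indexed by $(n-1)$-cones of $\Sigma_{E,E}$. This is possible in a smooth complete toric variety, since monomials of cones span each graded piece. By the Minkowski weight description, pairing $[\Sigma_{M,M^\perp}]$ with such a monomial gives the weight of that cone: $1$ if the cone lies in $\Sigma_{M,M^\perp}$ and $0$ otherwise. On the other side, $\iota^*$ sends the monomial to itself or to $0$ according to the same criterion. In $\ChowGrp^{n-1}(\Sigma_{M,M^\perp})$ every maximal-cone monomial has degree $1$, by the normalization in \cite[Proposition 3.7]{Ardila2022}, which matches the constant weight $1$.

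\textbf{Main obstacle.} The subtle point is the well-definedness of this comparison. The expression of $\beta$ in cone monomials is not unique, and $\iota^*$ is only defined on the presentation \eqref{eq:torusorbitchow}. It is therefore not obvious that $\int_{\Sigma_{M,M^\perp}}\iota^*$ factors through Minkowski weights. I would resolve this by observing that both sides are linear functionals on $\ChowGrp^{n-1}(X(\Sigma_{E,E}))$: the left side manifestly, the right side because $\iota^*$ is a well-defined ring homomorphism. Two well-defined functionals that agree on a spanning set of cone monomials must coincide.
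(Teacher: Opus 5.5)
Your proposal is correct and follows the paper's proof: push-pull along $\mathrm{inv}\times\mathrm{mult}$ using that $X_{L,L^\perp}\to\overline{\phi(L\times L^\perp)}$ is birational, then replacing $[X_{L,L^\perp}]$ by $[\Sigma_{M,M^\perp}]$ via \Cref{lem:classofstricttransform}, then identifying pairing with this Minkowski weight with $\int_{\Sigma_{M,M^\perp}}\circ\,\iota^*$, and concluding with \Cref{lem:gammadelta}. Compared with the paper you only add detail it leaves implicit: birationality because $\phi$ is an automorphism of the torus, and agreement of the two well-defined linear functionals on the spanning cone monomials. Your hedging about the non-generically-finite case in Step 1 is unnecessary once you have shown the degree is $1$.
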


\begin{proof}
Recall that $X_{L,L^\perp}$ is the strict transform of $L\times L^\perp$ in $X(\Sigma_{E,E})$.
    It follows directly from the push-pull formula and by noting that the map $X_{L,L^\perp}\to \overline{\phi(L\times L^\perp)}$ is birational. \begin{align*}
    &\mathrel{\phantom{=}} \int_{\PP^n \times (\PP^n)^*}[\overline{\phi(L\times L^\perp)}]\cdot[H \times (\PP^n)^*]^k\cdot[\PP^n\times H]^{n-k-1}\\
    &= \int_{X(\Sigma_{E,E})}[X_{L,L^\perp}]\cdot((\text{inv}\times\text{mult})^*[H \times (\PP^n)^*])^k\cdot ((\text{inv}\times\text{mult})^*[\PP^n\times H])^{n-k-1}.
    \end{align*}
     By \Cref{lem:classofstricttransform} we have $[X_{L,L^\perp}]=[\Sigma_{M,M^\perp}]$. As a Minkowski weight, $[\Sigma_{M,M^\perp}]$ assigns one to the maximal cones of $\Sigma_{M,M^\perp}$ in $\Sigma_{E,E}$ and 0 to all other cones in $\Sigma_{E,E}$. In particular intersecting a class $[V]\in A^{n-1}(X(\Sigma_{E,E}))$ with $[X_{L,L^\perp}]$ and taking the degree map in $A^{2n}(X(\Sigma_{E,E}))$ is the same as mapping $[V]$ under the combinatorial pullback map $\iota^*$ and applying the combinatorial degree map $\int_{\Sigma_{M,M^\perp}}$. By \Cref{lem:gammadelta}, the pullbacks of the two hyperplane classes map to $\widetilde{\gamma}$ and $\delta$ respectively, which finishes the proof.
\end{proof}

To finish off this section, we give a short preview of what will happen in the next section by applying this machinery to the case $k=0$ which computes the dual degree of $L^{-1}$.
\begin{theorem}
\label{thm: dualdegrees}
    Let $L\subseteq \PP^n$ be a linear space of dimension $0\leq d<n$ such that the matroid $M=\Matroid(L)$ is connected. Then $\deg((L^{-1})^\vee)=2^d \beta(M)$.
\end{theorem}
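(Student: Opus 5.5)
By \Cref{prop: redStep1}, $\deg((L^{-1})^\vee)=2^d\deg(\widetilde\phi)$, where $\widetilde\phi\colon L\times L^\perp\dashrightarrow H_+$ is $(x,y)\mapsto xy$. So it suffices to show $\deg(\widetilde\phi)=\beta(M)$. Connectedness, together with $0\le d<n$, makes $M$ loopless and coloopless, so the machinery of the previous subsection applies.

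The plan is to express $\deg(\widetilde\phi)$ through the multidegree of $\overline{\phi(L\times L^\perp)}$. Write $Y\coloneq\overline{\phi(L\times L^\perp)}\subseteq\PP^n\times(\PP^n)^*$. The map $\phi$ restricted to $L\times L^\perp$ is birational onto $Y$, since the first coordinate recovers $x$ and then $y=(xy)/x$. The second projection of $Y$ is $\overline{\widetilde\phi(L\times L^\perp)}=H_+$, which has dimension $n-1=\dim Y$ by \Cref{thm: defectiveness} and \eqref{eq: tropDual}. Hence $\operatorname{pr}_2\colon Y\to H_+$ is generically finite, of degree $\deg(\widetilde\phi)$. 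Intersecting $Y$ with $n-1$ general hyperplanes of the second factor therefore gives
\[
\int[Y]\cdot[\PP^n\times H]^{n-1}=\deg(\widetilde\phi)\cdot\deg H_+=\deg(\widetilde\phi).
\]
By \Cref{prop:howtocomputedegrees} with $k=0$, this number equals $\int_{\Sigma_{M,M^\perp}}\delta^{n-1}$. By \Cref{lem:gammadelta}, $\delta$ is the sum of the $x_{F|G}$ over biflats with $i\in F\cap G$.

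It then remains to identify $\int_{\Sigma_{M,M^\perp}}\delta^{n-1}$ with $\beta(M)$. I would compare $\delta$ with the class that Ardila--Denham--Huh call $\delta$ in \cite{Ardila2022}: it is the pullback of the hyperplane class along the map $(x,y)\mapsto x+y$, and on rays it evaluates $\max_j((z+w)_i-(z+w)_j)$. On a ray $e_S+f_T$ this gives $[i\in S]+[i\in T]-\min_j([j\in S]+[j\in T])$. Because $S\cup T=E$, the minimum equals $1$ (every $j$ lies in $S$ or $T$, and $S\cap T\ne E$). The value is therefore $1$ exactly when $i\in S\cap T$ and $0$ otherwise, which matches \Cref{lem:gammadelta}. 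The main theorem of \cite{Ardila2022} on the conormal fan states $\int_{\Sigma_{M,M^\perp}}\delta^{n-1}=\beta(M)$, which gives $\deg(\widetilde\phi)=\beta(M)$ and hence $\deg((L^{-1})^\vee)=2^d\beta(M)$.

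The main obstacle is this last identification: checking that our $\delta$, and the degree normalization of $\int_{\Sigma_{M,M^\perp}}$, agree exactly with the conventions of \cite{Ardila2022}. In particular, their degree map must be the one giving value $1$ on each maximal biflag monomial, consistent with the constant weight $1$ of \Cref{lem:classofstricttransform}. A secondary point is that the sign convention of $(-x,x+y)$ does not affect the second factor, since only $z+w$ enters. If the conventions differ by sign or reindexing, I would instead recompute $\beta(M)=\int\delta^{n-1}$ via their reduction in terms of $\gamma,\delta$ and the characteristic-polynomial identity of \cite{Ardila2022}.
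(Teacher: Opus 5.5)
Your proposal is correct and follows the paper's proof essentially step for step. You reduce via \Cref{prop: redStep1} to $\deg(\widetilde{\phi})$, use the birationality of $\phi|_{L\times L^\perp}$ and push-pull along $\operatorname{pr}_2$ onto $H_+$, and then apply \Cref{prop:howtocomputedegrees} with $k=0$ together with \cite[Proposition 4.9]{Ardila2022} to get $\int_{\Sigma_{M,M^\perp}}\delta^{n-1}=\beta(M)$; your explicit check that the class $\delta$ of \Cref{lem:gammadelta} coincides with the class $\delta$ of \cite{Ardila2022} is correct, and the paper leaves it implicit.
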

\begin{proof}
    Since $|E|=n+1\geq 2$ connectedness implies that $M$ is loopless and coloopless.
    By \Cref{prop: redStep1} we have $\deg((L^{-1})^\vee)=2^d\deg(\widetilde{\phi})$ where $\widetilde{\phi}=(\text{pr}_2\circ \phi)|_{L\times L^\perp}$. Since $\deg (\phi|_{L\times L^\perp})=1$, it suffices to show that $\deg(\text{pr}_2|_{\overline{\phi(L\times L^\perp)}})=\beta(M)$. Recall that the image $\text{pr}_2(\overline{\phi(L\times L^\perp)})=L\star L^\perp$ is the hyperplane $H_+$, therefore we have $(\text{pr}_2)_*[\overline{\phi(L\times L^\perp)}]=\deg(\text{pr}_2|_{\overline{\phi(L\times L^\perp)}})\cdot[H]$. By the push-pull formula we get \begin{align*}
\deg(\text{pr}_2|_{\overline{\phi(L\times L^\perp)}})
        &=\int_{\PP^n}(\text{pr}_2)_*[\overline{\phi(L\times L^\perp)}] \cdot [H]^{n-1} \\
        &=\int_{\PP^n \times (\PP^n)^*}[\overline{\phi(L\times L^\perp)}]\cdot [\PP^n\times H]^{n-1}\\
        &=\int_{\Sigma_{M,M^\perp}}\delta^{n-1} \\
        &= \beta(M).
    \end{align*}
    Here the second to last equality is precisely \Cref{prop:howtocomputedegrees} and the last equality is \cite[Proposition 4.9]{Ardila2022}.
\end{proof}

\section{Proof of the main theorem} \label{sec:triathlon}

This section contains all details for the proof of our main theorem and it is divided into three parts. 

The first part relates the polar degrees of $L^{-1}$ to the multidegree of the class $[\overline{\phi(L\times L^\perp)}]$. This is precisely the analog of \Cref{prop: redStep1} where we did the same already for the dual variety $(L^{-1})^\vee$. We will again use tropical methods to extract a power of 2 from the polar degrees.

The second part focuses on computing the multidegree of the class $[\overline{\phi(L\times L^\perp)}]$ using \Cref{prop:howtocomputedegrees}. We will work in the Chow ring of the conormal fan of a matroid $M$ to obtain a combinatorial formula summing over flags of flats in the matroid $M$ and involving  beta invariants of minors of $M$. In the case of the dual $(L^{-1})^\vee$ this was already done by \cite[Proposition 4.9]{Ardila2022}. We note that the arguments in this part work for all matroids $M$, irregardless of their representability over $\KK$.

In the final part we rewrite the combinatorial formula obtained in part two in terms of the reduced characteristic polynomial of the matroid $M$ which allows us to arrive at the clean formula presented in \Cref{thm:intro:polardegrees}. In this way we conclude the proof of that theorem.

We will always assume in this section that $M$ is loopless and coloopless, unless stated otherwise. The main theorem does not have this assumption, we will do this reduction in the proof of the theorem.

\subsection{A simpler map} \label{sec:swimming}

Recall that $\Con(L^{-1})\subseteq \PP^n\times (\PP^n)^*$ denotes the conormal variety of $L^{-1}$. We write $[\Con(L^{-1})]\in A^\bullet(\PP^n\times (\PP^n)^*)$ for its class in the Chow ring of $\PP^n\times (\PP^n)^*$. We identify this Chow ring with $\QQ[h_1,h_2]/\langle h_1^{n+1},h_2^{n+1}\rangle$ where $h_1=[H\times (\PP^n)^*]$ and $h_2=[\PP^n\times H]$ are the two pullbacks of hyperplane classes from the two factors. Our next goal is to establish the following proposition.
\begin{proposition}
\label{prop:gettingridof2}
    For any $k=0,\ldots,d$ we have \begin{align*}
            \int_{\PP^n\times (\PP^n)^*}[\Con(L^{-1})]h_1^kh_2^{n-k-1}=2^{d-k}\int_{\PP^n\times (\PP^n)^*}[\overline{\phi(L\times L^\perp)}] h_1^kh_2^{n-k-1}
    \end{align*}
\end{proposition}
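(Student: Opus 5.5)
The plan is to compare both sides tropically. The two varieties are images of the same source $L\times L^\perp$ under monomial maps that differ only in the second factor. Indeed $\psi=\theta\circ\phi$ with $\theta(a,b)=(a,a^{-1}b)$, since $\theta(x^{-1},xy)=(x^{-1},x^2y)$. By \Cref{prop: parametrization}, $\overline{\Im\psi}=\Con(L^{-1})$.

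On tori, $\phi$ and $\psi$ restrict to homomorphisms with tropicalizations the linear maps
\[
A_\phi(u,v)=(-u,u+v),\qquad A_\psi(u,v)=(-u,2u+v),
\]
defined on $N_E\oplus N_E$.

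\textbf{Step 1: both maps are generically injective on $L\times L^\perp$.} From $(x^{-1},x^2y)$ one recovers $x$, then $y$, and similarly for $\phi$. This is also implicit in \Cref{prop:howtocomputedegrees}. By \cite[Theorem 3.12]{sturmfels2007tropical}, the weighted fans are therefore the pushforwards
\[
\trop(\Con(L^{-1}))=(A_\psi)_*\bigl(\Sigma_M\times\Sigma_{M^\perp}\bigr),\qquad \trop(\overline{\phi(L\times L^\perp)})=(A_\phi)_*\bigl(\Sigma_M\times\Sigma_{M^\perp}\bigr),
\]
with all maximal cones of $\Sigma_M\times\Sigma_{M^\perp}$ carrying weight $1$.

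\textbf{Step 2: express the multidegrees tropically.} Each number $\int[V]h_1^kh_2^{n-k-1}$ equals the degree of the stable intersection of $\trop(V)$ with $k$ copies of the tropical hyperplane $\trop(H)\times N_E$ and $n-k-1$ copies of $N_E\times\trop(H)$. This is the tropical degree formula \cite[Corollary 3.6.16]{Maclagan2015} applied after intersecting with generic translates of coordinate-generic hyperplanes, using that generic translates meet $V$ only in the torus.

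By the projection formula for tropical pushforward, both numbers become intersection numbers on $\Sigma_M\times\Sigma_{M^\perp}$ with pulled-back tropical hyperplanes:
\begin{itemize}
\item $k$ copies of $\trop(H)$ pulled back along $(u,v)\mapsto -u$, identical for both sides;
\item $n-k-1$ copies of $\trop(H)$ pulled back along $(u,v)\mapsto 2u+v$, respectively $(u,v)\mapsto u+v$.
\end{itemize}

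\textbf{Step 3: perform the first-factor intersections first.} These depend only on $u$. They cut $\Sigma_M$ down to a balanced $(d-k)$-dimensional fan cycle $Z_k\subseteq N_E$, namely the stable intersection of $\Sigma_M$ with $k$ tropical hyperplanes in the variable $-u$. Both numbers then become
\[
\deg\Bigl((Z_k\times\Sigma_{M^\perp})\cdot \textstyle\prod^{\,n-k-1} (c\,u+v)^*\trop(H)\Bigr),\qquad c\in\{1,2\}.
\]

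\textbf{Step 4: extract the factor $2^{d-k}$.} Let $s(u,v)=(2u,v)$. The map $(u,v)\mapsto 2u+v$ equals $(u,v)\mapsto u+v$ precomposed with $s$, so the $c=2$ number equals the $c=1$ number computed on $s_*(Z_k\times\Sigma_{M^\perp})$. Since $Z_k$ is a fan, scaling by $2$ preserves its support. On each $(d-k)$-dimensional cone it multiplies the lattice index by $2^{d-k}$. Hence $s_*(Z_k\times\Sigma_{M^\perp})=2^{d-k}\,(Z_k\times\Sigma_{M^\perp})$, and the proposition follows.

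\textbf{Main obstacle.} The delicate point is Step 2, together with the projection formula across the non-proper linear maps $A_\phi$ and $A_\psi$. One must justify that the stable-intersection computation of multidegrees commutes with tropical pushforward, including the weights and lattice indices coming from \cite[Theorem 3.12]{sturmfels2007tropical}. One must also check that the first-factor cuts may be done first to produce the intermediate cycle $Z_k$.

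My first attempt would be to realize $Z_k$ algebraically as $\trop(L\cap\Lambda^{-1})$ for a generic codimension-$k$ linear space $\Lambda$. This works because intersecting $\Con(L^{-1})$ or $\overline{\phi(L\times L^\perp)}$ with $h_1^k$ just restricts the parameter $x$ to $L\cap\Lambda^{-1}$. The $2^{d-k}$ factor is then the same weight-scaling phenomenon as $\weight_{\trop(X^2)}=2^{\dim X}\weight_{\trop(X)}$ used in \Cref{prop: redStep1}, applied to the $(d-k)$-dimensional variety $X=L\cap\Lambda^{-1}$.
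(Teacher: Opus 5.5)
Your argument is correct, and it follows a genuinely different route from the paper's.

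\emph{How the paper does it.} The paper stays algebraic as long as possible. It puts the squaring maps $f_1(x,y)=(x^2,y)$ and $f_2$ into a commutative square with $\phi$ and $(x^{-1},x^2y)$. Push--pull along $f_2$ then gives two factors:
\begin{itemize}
\item $2^{-k}$, from $f_2^*\widetilde{h_1}=2h_1$;
\item $\deg(f_2|_{\Con(L^{-1})})=2^{c(M)-1}$, which is derived from $\deg L^2=2^{d-c(M)+1}$ in \cite{Dey2020}.
\end{itemize}
Tropical geometry is used only to show that $\trop(\phi(f_1(L\times L^\perp)))$ and $\trop(\phi(L\times L^\perp))$ have equal support, with every weight rescaled by $2^{d-c(M)+1}$. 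The $c(M)$'s then cancel.

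\emph{How yours differs.} You notice that $\phi$ and $\psi$ are both torus automorphisms. So both tropicalizations are unimodular images of $\Sigma_M\times\Sigma_{M^\perp}$ with weight one, and you extract $2^{d-k}$ purely tropically via the projection formula for $s(u,v)=(2u,v)$. If you skipped Step 3 and applied $s$ to all of $\Sigma_M\times\Sigma_{M^\perp}$, you would get $2^{-k}\cdot 2^{d}$. This is the tropical shadow of the paper's $2^{-k}\cdot 2^{c(M)-1}\cdot 2^{d-c(M)+1}$: tropical pushforward along $s$ produces $2^d$ in one go, whereas algebraic pushforward splits it into a map degree times a weight factor.

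\emph{What each buys.} Your route needs no connected-component bookkeeping and no input on $\deg L^2$. It also adapts directly to $L^{-m}$: the first-factor functions pick up $m^k$, scaling by $m+1$ gives $(m+1)^{d-k}$, and $\deg\psi_m=m^{c(M)-1}$ enters once. This recovers \Cref{prop: gettingridofthek}. The paper's route, in exchange, only needs a \emph{uniform} rescaling of weights. So it never uses the tropical projection formula or the compatibility of intersection products with products of cycles.

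\emph{Your flagged obstacle.} The step you call the main obstacle is not delicate. $A_\phi$ and $A_\psi$ are invertible over $\ZZ$ on $N_E\oplus N_E$, with inverses $(a,b)\mapsto(-a,a+b)$ and $(a,b)\mapsto(-a,2a+b)$; they are not ``non-proper''. So Step 2 is pure transport of structure. The only real inputs are standard results in Allermann--Rau's tropical intersection theory:
\begin{itemize}
\item the projection formula $\deg(C\cdot s^*\varphi_1\cdots s^*\varphi_r)=\deg(s_*C\cdot\varphi_1\cdots\varphi_r)$, for the injective finite-index map $s$ and $\varphi_i=\max_j(u_j+v_j)$;
\item the identity $(A\times B)\cdot\operatorname{pr}_1^*\varphi=(\varphi\cdot A)\times B$, used in Step 3.
\end{itemize}
Your fallback via $L\cap\Lambda^{-1}$ is therefore unnecessary. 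It would also bring its own complications, such as irreducibility of that section and the degree of squaring on it.
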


\begin{proof}
  Consider the following commutative diagram: \begin{equation}
  \label{eq:diagramforsquaring}
      \begin{tikzcd}
          \PP^n\times (\PP^n)^* \arrow[r,dashed,"\phi"] & \PP^n\times (\PP^n)^* \\
        \PP^n\times (\PP^n)^* \arrow[u,"f_1"]\arrow[r,dashed, "{(x^{-1},x^2y)}"] & \PP^n\times (\PP^n)^* \arrow[u,"f_2"]\\
        L\times L^\perp \arrow[r,dashed,"\psi"] \arrow[u,"\subseteq",phantom,sloped] & \Con(L^{-1}) \arrow[u,"\subseteq",phantom,sloped]
      \end{tikzcd}
  \end{equation}
  Here $f_1,f_2$ are the squaring maps in the first factor, i.e. they both send a point $(x,y)$ to $(x^2,y)$. The map $\psi$ is precisely the parametrization of $\Con(L^{-1})$ that we deduced in \Cref{prop: parametrization}.

  We will mostly work in the two rightmost spaces of this diagram. For better readability we want to distinguish the hyperplane classes on the upper right $\PP^n\times (\PP^n)^*$ from those in the lower right $\PP^n\times (\PP^n)^*$. For that reason we will now use the names $\widetilde{h_1},\widetilde{h_2}$ for the two hyperplane classes in the upper right $\PP^n\times (\PP^n)^*$ and as introduced above stick with $h_1,h_2$ on the lower right $\PP^n\times (\PP^n)^*$. Using this notation we clearly have $f_2^*\widetilde{h_1}=2h_1, f_2^*\widetilde{h_2}=h_2$. Hence by the push-pull formula we get \begin{align*}
      \int_{\PP^n\times (\PP^n)^*}[\Con(L^{-1})]h_1^kh_2^{n-k-1}&=\int_{\PP^n\times (\PP^n)^*}[\Con(L^{-1})]\left(\frac{1}{2}f_2^*\widetilde{h_1}\right)^k(f_2^*\widetilde{h_2})^{n-k-1}\\&=2^{-k}\int_{\PP^n\times (\PP^n)^*}(f_2)_*[\Con(L^{-1})]\widetilde{h_1}^k\widetilde{h_2}^{n-k-1} \\
      &=2^{-k}\deg(f_2|_{\Con(L^{-1})})\int_{\PP^n\times (\PP^n)^*}[\overline{\phi(f_1(L\times L^\perp))}]\widetilde{h_1}^k\widetilde{h_2}^{n-k-1}
  \end{align*}
  For the last equality we used that the closure of the image of $f_2|_{\Con(L^{-1})}$ is $\overline{\phi(f_1(L\times L^\perp))}$ by commutativity of the above diagram.

  There are two things left to prove: \begin{itemize}
      \item We have $\deg(f_2|_{\Con(L^{-1})})=2^{c(M)-1}$. We defer this computation to \Cref{lem:degreeofsquaringCon} below.
      \item We can rewrite the integral avoiding the map $f_1$, namely \[\int_{\PP^n\times (\PP^n)^*}[\overline{\phi(f_1(L\times L^\perp))}]\widetilde{h_1}^k\widetilde{h_2}^{n-k-1}=2^{d-c(M)+1}\int_{\PP^n\times (\PP^n)^*}[\overline{\phi(L\times L^\perp)}]\widetilde{h_1}^k\widetilde{h_2}^{n-k-1}
      \]
  \end{itemize} 
  We will now deal with this last point using tropical intersection theory. By \cite[Theorem 6.7.9]{Maclagan2015} we have \begin{align*}
      &\int_{\PP^n\times (\PP^n)^*}[\overline{\phi(f_1(L\times L^\perp))}]\widetilde{h_1}^k\widetilde{h_2}^{n-k-1}\\ &=\text{mult}_0(\trop(\phi(f_1(L\times L^\perp)))\cap_{\text{st}}\trop(\widetilde{h_1}^k)\cap_{\text{st}}\trop(\widetilde{h_2}^{n-k-1})).
  \end{align*}
  Here $\cap_{\text{st}}$ denotes stable intersection and by $\trop(\widetilde{h_1}^k)$ we mean the tropicalization of the points with nonzero coordinates in $V\times \PP^n$ for a generic linear subspace $V$ of codimension $k$ and analogously for $\trop(\widetilde{h_2}^{n-k-1})$. The result of this stable intersection is a weighted fan and $\text{mult}_0$ denotes its multiplicity at the origin. 
  We can tropicalize the monomial map $\phi$ and use \Cref{lem: tropLinSpaces} to see that set-theoretically we have \begin{align*}
      \trop(\phi(f_1(L\times L^\perp)))&=\trop(\phi)(\trop(f_1(L\times L^\perp)))\\&=\trop(\phi)(\trop(L^2\times L^\perp))\\&=\trop(\phi)(\trop(L\times L^\perp))\\ &=\trop(\phi(L\times L^\perp)) 
  \end{align*}
  These equalities only hold set-theoretically, not on the level of weighted fans. It remains to track how the weights change along these equalities. To track weights under the finite map $\trop(\phi)$ we use \cite[Theorem 3.12]{sturmfels2007tropical}. By the same reasoning as in \Cref{prop: redStep1} we obtain for every maximal cone $\sigma$ of $\trop(\phi(f_1(L\times L^\perp)))$
  \begin{align*}
      \weight_{\trop(\phi(f_1(L\times L^\perp)))}(\sigma)=2^{d-c(M)+1}\weight_{\trop(\phi(L\times L^\perp))}(\sigma)
  \end{align*}

  Since the weights of all cones are rescaled by the same factor, we can extract this factor from the stable intersection and obtain again by \cite[Theorem 6.7.9]{Maclagan2015} \begin{align*}
      &\text{mult}_0(\trop(\phi(f_1(L\times L^\perp)))\cap_{\text{st}}\trop(\widetilde{h_1}^k)\cap_{\text{st}}\trop(\widetilde{h_2}^{n-k-1}))\\
      &=2^{d-c(M)+1}\text{mult}_0(\trop(\phi(L\times L^\perp))\cap_{\text{st}}\trop(\widetilde{h_1}^k)\cap_{\text{st}}\trop(\widetilde{h_2}^{n-k-1})) \\
      &=2^{d-c(M)+1}\int_{\PP^n\times (\PP^n)^*}[\overline{\phi(L\times L^\perp)}]\widetilde{h_1}^k\widetilde{h_2}^{n-k-1}
  \end{align*}
  Together with \Cref{lem:degreeofsquaringCon} below this finishes the proof.
\end{proof}
\begin{lemma}
    \label{lem:degreeofsquaringCon}
    In diagram \eqref{eq:diagramforsquaring} the restriction of the squaring map $f_2$ to $\Con(L^{-1})$ is finite of degree $\deg(f_2|_{\Con(L^{-1})})=2^{c(M)-1}$.
\end{lemma}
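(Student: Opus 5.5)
Our approach is to compute the degree of $f_2|_{\Con(L^{-1})}$ as the size of a general fibre. Finiteness is immediate: $f_2$ is itself finite on $\PP^n\times(\PP^n)^*$, since coordinatewise squaring $\PP^n\to\PP^n$ is a finite morphism, so its restriction to the closed subvariety $\Con(L^{-1})$ is finite. For the degree, we take a general point $(p,H)$ of $\Con(L^{-1})$ with $p\in L^{-1}\cap\TT^n$, and by \Cref{prop: parametrization} write it as $(x^{-1},x^2y)$ with $x\in L\cap \TT^n$ and $y\in L^\perp$ general. A point $(p',H')$ lies in the same fibre exactly when $H'=H$ and $p'^2=p^2$, i.e.\ $p'=\varepsilon p$ for some sign vector $\varepsilon\in\{\pm1\}^{E}$ modulo $\pm1$. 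The task is therefore to count the sign vectors $\varepsilon$ with $(\varepsilon p,H)\in\Con(L^{-1})$, up to the global sign.

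First we check that the condition holds for $\varepsilon$ that is constant on each connected component of $M$. In that case $\varepsilon x\in L$, because $\widehat L=\bigoplus_i \widehat L_i$ splits along the components. Likewise $\varepsilon y\in L^\perp$, since $L^\perp$ splits along the same partition. We also have $(\varepsilon x)^2(\varepsilon y)=\varepsilon\, x^2y$. This does not look like $H$ at first, so we argue directly with tangent spaces instead. The map $p\mapsto\varepsilon p$ is a linear automorphism of $\PP^n$ preserving $L^{-1}$, because $(\varepsilon x)^{-1}=\varepsilon x^{-1}$. It therefore sends $T_pL^{-1}$ to $T_{\varepsilon p}L^{-1}$, and hence sends $H$ to $\varepsilon H$. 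Via \Cref{prop: parametrization} the tangent hyperplanes at $\varepsilon p$ are $x^2 L^\perp$. Since $\varepsilon L^\perp = L^\perp$ and $x^2L^\perp=\varepsilon\, x^2L^\perp$, the set of tangent hyperplanes at $\varepsilon p$ coincides with that at $p$, so $H$ is tangent at $\varepsilon p$. This produces $2^{c(M)}/2=2^{c(M)-1}$ distinct points in the fibre.

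Conversely, suppose $\varepsilon p\in L^{-1}$ and $H$ is tangent there. Then $\varepsilon x\in L$ together with $x\in L$ forces $\varepsilon$ to be constant on components for general $x$. To see this, let $S=\{i:\varepsilon_i=-1\}$. Then $x\pm\varepsilon x$ gives $x|_S, x|_{E\setminus S}\in\widehat L$ for general $x\in \widehat L$. Consequently $\widehat L=(\widehat L\cap\KK^S)\oplus(\widehat L\cap \KK^{E\setminus S})$, so $S$ is a union of connected components. In fact only the condition $\varepsilon p\in L^{-1}$ is needed here; tangency is automatic from the previous paragraph. We must, however, also exclude fibre points outside the torus or at non-general points. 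This is handled by choosing $(p,H)$ general so that its fibre lies in the dense open subset parametrized as above, which is possible because $f_2$ is finite and preserves the torus.

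The step we expect to be the main obstacle is the genericity bookkeeping: ensuring that, for a general $p$, every preimage point of $\Con(L^{-1})$ lies over $L^{-1}\cap \TT^n$ with $p'$ smooth, so that the counting above is legitimate. We plan to argue via dimension: the complement of the open set has lower dimension, and its image under the finite map $f_2$ cannot be dense.
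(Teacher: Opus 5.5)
Your argument is correct, but it takes a different route from the paper. The paper never inspects fibres of $f_2$. It uses that $\psi$ and $\phi|_{L^2\times L^\perp}$ are birational to rewrite $\deg(f_2|_{\Con(L^{-1})})=\deg(\phi\circ f_1|_{L\times L^\perp})=\deg(h|_L)$, where $h$ is coordinatewise squaring on $\PP^n$. It then obtains $\deg(h|_L)=2^d/\deg(L^2)$ by push--pull, and imports $\deg(L^2)=2^{d-c(M)+1}$ from Dey et al.

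You instead count a general fibre directly with sign vectors. Your converse step, from $\varepsilon x\in L$ to the splitting $\widehat L=(\widehat L\cap\KK^S)\oplus(\widehat L\cap\KK^{E\setminus S})$, is in effect a direct proof that $\deg(h|_L)=2^{c(M)-1}$. So your route is self-contained and elementary, and it reproves the cited degree of $L^2$ rather than using it. It also carries over verbatim to the maps needed later for $L^{-m}$, with $m$-th roots of unity replacing signs. The paper's route is shorter, given the citation.

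Two remarks on your write-up. First, the step you flag as the main obstacle is automatic:
\begin{enumerate}
\item $p'^2=p^2$ with $p\in\TT^n$ forces $p'\in\TT^n$;
\item $L^{-1}\cap\TT^n$ is smooth, being isomorphic to $L\cap\TT^n$ under inversion;
\item over the smooth locus, $\Con(L^{-1})$ is just the conormal bundle.
\end{enumerate}
Hence every fibre point is $(\varepsilon p,H)$ with $\varepsilon p\in L^{-1}\cap\TT^n$, and no dimension argument is needed. Moreover, $(\varepsilon p,H)=\psi(\varepsilon x,y)$ directly, since $(\varepsilon x)^{-1}=\varepsilon p$ and $(\varepsilon x)^2=x^2$. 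So your detour through $\varepsilon H$ and the tangent-space automorphism is unnecessary.

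Second, in the converse, make the quantifiers explicit. For each of the finitely many $\varepsilon$, the set $\widehat L\cap\varepsilon\widehat L$ is a linear subspace of $\widehat L$. Either it equals $\widehat L$, in which case $S$ is a separator, or a general $x$ avoids it. This justifies choosing $x$ general before quantifying over $\varepsilon$.
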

\begin{proof}
    Since the maps $\phi|_{L^2\times L^\perp}$ and $\psi$ are both birational, we have \begin{align*}
        \deg(f_2|_{\Con(L^{-1})})=\deg(f_2\circ \psi)=\deg(\phi \circ f_1|_{L\times L^\perp})=\deg(f_1|_{L\times L^\perp})
    \end{align*}
    Since the map $f_1$ is the cartesian product of the squaring map in the first factor and identity in the second factor, we get $\deg(f_1|_{L\times L^\perp})=\deg(h|_L)$ where $h:\PP^n\to \PP^n$ is the coordinatewise squaring map. Let $[H]$ and $[\widetilde{H}]$ denote the hyperplane classes on the domain and target of $h$ respectively, using once more the push-pull formula we find
    
    \begin{align*}
        \deg(L^2)&=\int_{\PP^n}[L^2][\widetilde{H}]^d\\
        &=\int_{\PP^n}[h(L)][\widetilde{H}]^d \\
        &=\int_{\PP^n}\frac{h_*[L]}{\deg(h|_L)}[\widetilde{H}]^d \\
        &=\frac{1}{\deg(h|_L)}\int_{\PP^n}[L](2[H])^d \\
        &=\frac{2^d}{\deg(h|_L)}  
    \end{align*}
    Since by \cite[Theorem 2.6]{Dey2020}, $\deg(L^2) = 2^{d-c(M)+1}$,
    we conclude that $\deg(h|_L)=2^{c(M)-1}$ as claimed. 
\end{proof}

\subsection{Multidegree} \label{sec:cycling}

Combining \Cref{prop:gettingridof2} and \Cref{prop:howtocomputedegrees} we reduced the problem of finding the polar degrees of $L^{-1}$ to computing the intersection numbers \[
\int_{\Sigma_{M,M^*}}\widetilde{\gamma}^k\delta^{n-k-1}
\]
The purpose of this section is to carry out this computation and give a first combinatorial formula. To state the main result of this section more precisely we recall the following definition. 
\begin{definition}\label{def:decreasing flag}
Fix the linear order $ 0 < \dots <n$ on the ground set $E=\{0,\ldots,n\}$. A flag $\mathcal{F} = (F_{0} = \emptyset \subsetneq F_1 \subseteq \dots \subseteq F_{k+1} = E)$ of length $k$ is \emph{decreasing} if $\min F_{i+1} \notin F_{i}$ for all $i=0,\dots,k$. In particular, this implies that every inclusion in $\mathcal{F}$ is strict.
\end{definition}
See \Cref{fig:LatticeOfMex} for an example of a decreasing $4$-flag in $M_{\textup{ex}}$.

\begin{proposition}
    \label{intersectionNumberToFlagBeta}
    For any $k\in \{0,\ldots, d\}$ we have
    \[
\int_{\Sigma_{M,M^*}} \widetilde{\gamma}^k \delta^{n-k-1} = \sum_{\substack{\mathcal{F}\text{ decr.}\\ \text{of length } k}} \beta(M[\mathcal{F}])
\]
where the sum goes over all decreasing chains of flats of length $k$ of the form $\mathcal{F}=\{F_0\coloneq \emptyset \subsetneq F_1\subsetneq \cdots \subsetneq F_{k}\subsetneq E\eqqcolon F_{k+1}\}$ and $\beta(M[\mathcal{F}])\coloneq\prod_{i=1}^{k+1}\beta(M|_{F_{i}}/F_{i-1})$.
\end{proposition}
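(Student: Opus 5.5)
We argue by induction on $k$, peeling off one factor of $\widetilde{\gamma}$ at a time. For $k=0$ the statement is $\int_{\Sigma_{M,M^\perp}}\delta^{n-1}=\beta(M)$, which is \cite[Proposition 4.9]{Ardila2022}; the only decreasing flag of length $0$ is $\emptyset\subsetneq E$, and $\beta(M[\mathcal{F}])=\beta(M)$.

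For the inductive step we use the freedom in \Cref{lem:gammadelta} to choose the index $i$. We take $i=\min E=0$, so that
\[
\widetilde{\gamma}=\sum_{\substack{F|G\text{ biflat}\\ 0\notin F}}x_{F|G}.
\]
We then multiply $\widetilde{\gamma}$ into $\widetilde{\gamma}^{k-1}\delta^{n-k-1}$ and expand it as a sum of divisor classes $x_{F|G}$. Each product $x_{F|G}\cdot(\cdots)$ should be computed by restricting to the star of the ray $e_{F|G}$ in $\Sigma_{M,M^\perp}$. Following the structure used in \cite{Ardila2022} for the proof of Proposition 4.9 there, the star of a biflat $F|G$ should decompose as a product of conormal-type fans for the minors $M|_F$ and $M/F$. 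Concretely, it should be $\Sigma_{M|_F,(M|_F)^\perp}\times\Sigma_{M/F,(M/F)^\perp}$ together with a factor recording $G$; we still have to confirm the exact splitting from the cone description via $S_i\subseteq S_{i+1}$, $T_i\supseteq T_{i+1}$.

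Under this decomposition, the restriction of $\delta$ should become $\delta_{M|_F}\otimes1+1\otimes\delta_{M/F}$ after an appropriate choice of index on each side. Likewise $\widetilde{\gamma}$ should restrict to a class of the same shape on $M/F$, since it is supported away from $F$. The restriction of $x_{F|G}$ itself would be rewritten using linear relations. Summing over $G$, the fibre factor should contribute $1$ only for a suitable $G$, or else collapse.

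Integrating, the degree of $\delta^{a}$ on the $M|_F$ side is nonzero only for $a=\dim\Sigma_{M|_F,(M|_F)^\perp}=|F|-2$, where it equals $\beta(M|_F)$ by \cite[Proposition 4.9]{Ardila2022} again. The remaining powers $\widetilde{\gamma}^{k-1}\delta^{\ast}$ go to $M/F$, and the induction hypothesis applies there. The result would be a recursion of the form
\[
I_k(M)=\sum_{F}\beta(M|_F)\,I_{k-1}(M/F),
\]
where $F$ ranges over flats with $0\notin F$ and
\[
I_k(M)\coloneq\int_{\Sigma_{M,M^\perp}}\widetilde{\gamma}^k\delta^{n-k-1}.
\]
Unwinding this recursion, and reindexing so that $F$ becomes the first flat $F_1$ of the flag, produces exactly the sum of $\prod_i\beta(M|_{F_i}/F_{i-1})$.

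The choice $i=\min$ at each stage is what imposes the decreasing condition. In the contraction $M/F_1$ the relevant index is $\min(E\setminus F_1)$, and requiring that the next flat not contain it is the condition $\min F_{i+1}\notin F_i$, after reversing the order of the chain appropriately. If the orientation comes out reversed, we would instead expand at the top of the flag, using $M|_F$ in place of $M/F$.

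The main obstacle is the star computation: identifying precisely the star of $e_{F|G}$ in $\Sigma_{M,M^\perp}$, and showing that the pullbacks of $\widetilde{\gamma}$ and $\delta$ split as claimed. This includes making the sum over the compatible $G$ telescope to a single term, since $\beta(M|_F)=0$ kills disconnected pieces. If that splitting proves awkward, we would fall back on a direct monomial expansion. That approach writes $\widetilde{\gamma}^k\delta^{n-k-1}$ as a sum over biflags with chosen indices and shows, by a sign-reversing involution, that only biflags whose $S$-chain is decreasing survive. The surviving terms would then be grouped by that chain, with each group counted by the $\beta$-formula.
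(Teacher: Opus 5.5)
\textbf{There is a genuine gap.} Your central recursion $I_k(M)=\sum_{0\notin F}\beta(M|_F)\,I_{k-1}(M/F)$ is false, so the plan fails at its key step.

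Since $0\notin F_1$, the minimum of the ground set of $M/F_1$ is still $0$. The condition you impose on the next flat is therefore ``$0\notin F_2$'', not $\min F_2\notin F_1$. Unwinding the recursion gives the sum of $\beta(M[\mathcal{F}])$ over \emph{all} flags with $0\notin F_k$, not over decreasing flags. Take $M=U_{3,4}$ and $k=2$. Your recursion gives $3\cdot\beta(U_{1,1})\cdot I_1(U_{2,3})=3\cdot 2=6$; the rank-two flats contribute nothing since $\beta(U_{2,2})=0$. The true value is $3$: only $\{2\}\subsetneq\{1,2\}$, $\{3\}\subsetneq\{1,3\}$ and $\{3\}\subsetneq\{2,3\}$ are decreasing. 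This also equals $\deg L^{-1}=\mu^+(U_{3,4})$ for a generic plane $L\subseteq\PP^3$.

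The error lies in the claim that $\widetilde{\gamma}$ restricts to ``a class of the same shape on $M/F$''. With the fixed index $i=0\notin F$, the product $x_{F|G}\,\widetilde{\gamma}$ contains the self-intersection $x_{F|G}^2$. It also contains every compatible $x_{F'|G'}$ with $F'\subsetneq F$, so it is not supported on the contraction side at all. Your fallback orientation, recursing into $M|_F$ with the factor $\beta(M/F)$, has the right combinatorics. It is the recursion the paper uses later for $P_M(t)$. As an intersection-theoretic induction it breaks down, however: $M|_F$ typically has coloops (here $U_{2,2}$), so $\Sigma_{M|_F,(M|_F)^\perp}$ and $I_{k-1}(M|_F)$ are not defined.

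\textbf{The missing idea} is to let the index in \Cref{lem:gammadelta} depend on the monomial you are multiplying. For a biflag $\mathcal{F}|\mathcal{G}$ with smallest flat $F_1$, expand $\widetilde{\gamma}$ using $i=\min F_1$. Every $x_{F|G}$ with $\min F_1\notin F$ that is compatible with $\mathcal{F}|\mathcal{G}$ must then satisfy $F\subsetneq F_1$ and $G\supseteq G_1$. So the new biflat is inserted at the bottom, and no squares ever occur. Start with $i=0$, which forces $0\notin F_k$, and iterate. This writes $\widetilde{\gamma}^k$ as the square-free sum of $x_{\mathcal{F}|\mathcal{G}}$ over $k$-biflags whose flag part $\mathcal{F}$ is decreasing. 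No star decomposition and no sign-reversing involution are needed. The paper then concludes with \cite[Lemma 4.15 and Proposition 4.18]{Ardila2022}: $\int x_{\mathcal{F}|\mathcal{G}}\,\delta^{n-k-1}$ equals $\beta(M[\mathcal{F}])$ if $\mathcal{G}=\mathcal{F}^\perp$ and $0$ otherwise.
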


Recall that by \Cref{lem:gammadelta} for any $i \in E$ we have
\[
\tilde{\gamma} = \sum_{\substack{F|G \textup{ biflat}\\ i \notin F}} x_{F|G}, \qquad \delta = \sum_{\substack{F|G \textup{ biflat}\\ i \in F \cap G}} x_{F|G}
\]
in the Chow ring of the matroid conormal fan.
To prove \Cref{intersectionNumberToFlagBeta} we will first deduce a square-free representation of any power of $\widetilde{\gamma}$. The result will then follow by applying a formula of \cite{Ardila2022}.

\begin{lemma}
\label{lem:expandinggamma}
Let $0\leq k \leq d$.
\begin{enumerate}
\item For all $k$-biflags $\mathcal{F}|\mathcal{G}$ we have
\[
x_{\mathcal{F}|\mathcal{G}} \cdot \tilde{\gamma} = \sum_{\substack{F|G \textup{ biflat} \\ \mathcal{F}\cup F|\mathcal{G}\cup G \textup{ biflag} \\F \subseteq F_1,\,G\supseteq G_1\\ \min F_1 \notin F}} x_{\mathcal{F}\cup F|\mathcal{G}\cup G}.
\]
Here $\mathcal{F}\cup F|\mathcal{G}\cup G$ means the $(k+1)$-biflag obtained by inserting $F|G$ into $\mathcal{F}|\mathcal{G}$ in the first position.
\item $\tilde{\gamma}^k = \displaystyle \sum_{\substack{
    \mathcal{F}|\mathcal{G} \textup{ $k$-biflag}\\
    \mathcal{F} \textup{ decr.}
}} x_{\mathcal{F}|\mathcal{G}}$.
\end{enumerate}
\end{lemma}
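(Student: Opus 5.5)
The plan is to prove (i) by exploiting the freedom in \Cref{lem:gammadelta} to choose the index $i$ differently for each monomial, and then to obtain (ii) from (i) by induction on $k$.

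\textbf{Part (i).} Fix a $k$-biflag $\mathcal{F}|\mathcal{G}=(F_1|G_1,\dots,F_k|G_k)$, ordered so that $F_1\subseteq\dots\subseteq F_k$ and $G_1\supseteq\dots\supseteq G_k$. We use the convention $F_{k+1}=E$, so that for $k=0$ we have $F_1=E$ and $x_{\mathcal F|\mathcal G}=1$. We represent $\widetilde\gamma$ by the formula of \Cref{lem:gammadelta} with the choice $i\coloneq\min F_1$:
\[
x_{\mathcal F|\mathcal G}\cdot\widetilde\gamma=\sum_{\substack{F|G\text{ biflat}\\ \min F_1\notin F}} x_{F|G}\,x_{\mathcal F|\mathcal G}.
\]
Next we examine which terms survive in $A^\bullet(\Sigma_{M,M^\perp})$.
\begin{enumerate}
\item No square terms occur. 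If $F|G=F_j|G_j$ for some $j$, then $\min F_1\in F_1\subseteq F_j=F$, which contradicts $\min F_1\notin F$. Hence every product $x_{F|G}\,x_{\mathcal F|\mathcal G}$ in the sum is square-free.
\item A square-free monomial is nonzero exactly when its bisubsets form a biflag, i.e.\ a cone of $\Sigma_{M,M^\perp}$. So a term survives iff $F|G$ can be inserted into $\mathcal F|\mathcal G$ so that the first coordinates increase, the second coordinates decrease, and the condition $\bigcup S_j\cap T_j\neq E$ still holds.
\item Where $F|G$ can sit. Because $\min F_1\in F_j$ for every $j$, while $\min F_1\notin F$, we cannot have $F\supseteq F_j$ for any $j$. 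Hence $F|G$ must be placed in first position, which forces $F\subseteq F_1$ and $G\supseteq G_1$.
\end{enumerate}
Conversely, every such biflag $\mathcal F\cup F|\mathcal G\cup G$ with $\min F_1\notin F$ arises as a term. This is exactly the claimed formula. The case $k=0$ is included: there $\min F_1=0$ and every biflat avoiding $0$ contributes.

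\textbf{Part (ii).} We induct on $k$; the base case $k=0$ reads $1=x_\emptyset$. Write $\widetilde\gamma^{k+1}=\widetilde\gamma^{k}\cdot\widetilde\gamma$, expand $\widetilde\gamma^k$ as a sum over $k$-biflags $\mathcal F|\mathcal G$ with $\mathcal F$ decreasing, and apply (i) to each term. The resulting terms are the $(k+1)$-biflags $F|G,F_1|G_1,\dots$ whose tail $(F_1|G_1,\dots)$ is decreasing and for which $\min F_1\notin F$. This is precisely the condition that the new first-coordinate flag $\emptyset\subsetneq F\subsetneq F_1\subsetneq\dots\subsetneq E$ is decreasing in the sense of \Cref{def:decreasing flag}. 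The index shift matches: the old requirement $\min F_1\notin F_0=\emptyset$ is vacuous and is replaced by $\min F_1\notin F$. Each decreasing $(k+1)$-biflag arises exactly once, namely from deleting its first biflat, which yields a decreasing $k$-biflag. So no multiplicities appear and the induction closes.

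\textbf{Main obstacle.} The delicate step is item 3 in part (i): showing that the new biflat must go in first position. In the bipermutohedral fan, consecutive biflats in a biflag may share the same first coordinate, and the extra condition $\bigcup S_j\cap T_j\neq E$ must be tracked. The argument relies only on $\min F_1\in F_j$ for all $j$ together with the standing assumption that $M$ is loopless and coloopless. That assumption is what makes \Cref{lem:gammadelta} valid for every choice of $i$, and hence lets us choose $i$ depending on the monomial.
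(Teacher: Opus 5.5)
Your proposal is correct and follows essentially the paper's proof. You choose $i=\min F_1$ in \Cref{lem:gammadelta}, use that $\min F_1\in F_j$ for all $j$ to force the new biflat into first position, and induct for (ii); you also make explicit two points the paper leaves implicit, namely that no square terms occur and that each decreasing biflag arises exactly once.

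One minor correction: the freedom to choose $i$ does not come from looplessness and colooplessness. It comes from the fact that the piecewise linear functions $\max_j(z_i-z_j)$ for different $i$ differ by the global linear function $z_i-z_{i'}$. That assumption is only needed for $\Sigma_{M,M^\perp}$ to be defined.
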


\begin{proof}
\begin{enumerate}[wide]
\item Write $\mathcal{F}=\{\emptyset\subsetneq F_1\subseteq \cdots \subseteq F_k\subseteq E\}$ and $\mathcal{G}=\{E\supseteq G_1\supseteq \cdots \supseteq G_k\supsetneq \emptyset\}$. In order for a monomial $x_{F|G}x_{\mathcal{F}|\mathcal{G}}$, where $F|G\notin \mathcal{F}|\mathcal{G}$, to be non-zero, one must be able to insert the biflat $F|G$ into the $k$-biflag $\mathcal{F}|\mathcal{G}$ to form a $(k+1)$-biflag. Use $i \coloneq \min F_1$ in the definition of $\tilde{\gamma}$, i.e. use the representation \[
\tilde{\gamma} = \sum_{\substack{F|G \textup{ biflat}\\ \min(F_1) \notin F}} x_{F|G}.
\] Now for any term $x_{F|G}$ in this representation, suppose that the monomial $x_{F|G}x_{\mathcal{F}|\mathcal{G}}$ is non-zero. Then since $\min(F_1) \notin F$, but $\min(F_1)\in F_j$ for all $j=1,\ldots,k$, $F$ must be the first flat of $F \cup \mathcal{F}$. This gives the necessary conditions $F \subseteq F_1$, $G \supseteq G_1$ and ensures that $F|G$ is inserted in the first position of $\mathcal{F}|\mathcal{G}$ to form $\mathcal{F}\cup F|\mathcal{G}\cup G$.  

\item This follows inductively from (i) by observing that the Staney-Reisner relations force $F \cup \mathcal{F}|G\cup \mathcal{G}$ to be a biflag and the minimum condition ensures that $\mathcal{F}$ is decreasing.
\end{enumerate}
\end{proof}

We conclude this section with the proof of \Cref{intersectionNumberToFlagBeta}.

\begin{proof}[Proof of \Cref{intersectionNumberToFlagBeta}]
By \cite[Lemma 4.15 and Proposition 4.18]{Ardila2022}
\[
\int x_{\mathcal{F}|\mathcal{G}} \cdot \delta^{n-k-1} = \begin{cases}
\beta(M[\mathcal{F}]) & \text{if }\mathcal{G} = \mathcal{F}^\perp, \\
0 & \text{otherwise}.
\end{cases}
\]
where $\mathcal{F}^\perp=\{\emptyset \subsetneq \text{cl}_{M^\perp}(E\setminus F_1)\subseteq \dots \subseteq \text{cl}_{M^\perp}(E\setminus F_k)\subseteq E\}$. 
We conclude by applying this to the expansion from \Cref{lem:expandinggamma}:
\[
\int_{\Sigma_{M,M^*}} \tilde{\gamma}^k \delta^{n-k-1} = \sum_{\mathcal{F}\text{ decr.}} \int_{\Sigma_{M,M^*}} x_{\mathcal{F}|\mathcal{F}^\perp}\delta^{n-k-1} = \sum_{\mathcal{F}\text{ decr.}} \beta(M[\mathcal{F}]).\qedhere
\]
\end{proof}

\subsection{A closed formula} \label{sec:running}

To recap, we have shown that
\[
\delta_k(\Con(L^{-1})) = 2^{d-k} \int_{X(\Sigma_{E,E})} \tilde{\gamma}^k \delta^{n-k-1} = 2^{d-k}\sum_{\mathcal{F}\text{ decr.\ $k$-flag}} \beta(M[\mathcal{F}]).
\]
Let $P_M(t) \coloneq \sum_{\mathcal{F}\text{ decr. flag}} \beta(M[\mathcal{F}]) t^{|\mathcal{F}|} \in \ZZ[t]$, where $|\mathcal{F}|$ denotes the length of a flag. It remains to establish the following result.
\begin{lemma}\label{lem: polynomialFormula}
We have the equality
\[
P_M(t) = (-t-1)^{d}\overline{\chi}\left(\frac{1}{t+1}\right)
\]
\end{lemma}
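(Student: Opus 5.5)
The plan is to prove the identity by induction on the size of the ground set. We decompose decreasing flags according to their largest proper flat and check that the right-hand side satisfies the same recursion. Throughout, $M$ is loopless and coloopless of rank $d+1$ on $E=\{0,\dots,n\}$, and decreasing flags are taken with respect to the fixed order.

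\emph{Step 1: recursion for $P_M$.} For a decreasing flag $\emptyset\subsetneq F_1\subsetneq\dots\subsetneq F_k\subsetneq E$, the last condition in \Cref{def:decreasing flag} reads $0=\min E\notin F_k$. The remaining conditions say exactly that $F_1\subsetneq\dots\subsetneq F_k$ is a decreasing flag of the restriction $M|_{F_k}$, for the induced order on $F_k$. Moreover $\beta(M[\mathcal F])=\beta(M|_{F_k}[\mathcal F'])\cdot\beta(M/F_k)$, where $\mathcal F'$ is the truncated flag. This gives
\[
P_M(t)=\beta(M)+t\sum_{\substack{F\text{ flat},\ F\neq\emptyset\\ 0\notin F}}\beta(M/F)\,P_{M|_F}(t).
\]
Restrictions may acquire coloops, so the recursion should be read for arbitrary loopless matroids. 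This is harmless, since $\beta$ of a minor with a coloop vanishes unless the minor is a single coloop, and $\overline{\chi}$ behaves multiplicatively. Equivalently, I would state the lemma for all loopless $M$ from the start.

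\emph{Step 2: translate the target.} Set $Q_M(t):=(-t-1)^{d}\overline{\chi}_M(1/(t+1))$ and substitute $s=1/(t+1)$, so that $t=(1-s)/s$. Write $r(F)$ for $\rank_M(F)$. Using $\beta(N)=(-1)^{\rank N-1}\overline{\chi}_N(1)$ and $\rank(M/F)=d+1-r(F)$, and multiplying by $(-s)^d$, the recursion for $Q_M$ becomes
\[
\overline{\chi}_M(s)=s^{d}\,\overline{\chi}_M(1)+(1-s)\sum_{\substack{\emptyset\neq F\text{ flat}\\ 0\notin F}}\overline{\chi}_{M/F}(1)\,s^{d+1-r(F)}\,\overline{\chi}_{M|_F}(s).
\]
Multiplying by $s-1$ and using $\chi=(t-1)\overline{\chi}$, this turns into an identity of the form
\[
\chi_M(s)=\sum_{\substack{F\text{ flat}\\ 0\notin F}} c(M/F)\,s^{\,d+1-r(F)}\,\chi_{M|_F}(s),
\]
where $c(M/F)$ is essentially $-\overline{\chi}_{M/F}(1)$, the signed beta invariant. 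The $F=\emptyset$ term must be absorbed with the convention $\chi_{M|_\emptyset}=1$.

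\emph{Step 3: prove the identity.} I would expand both sides via $\chi_N(s)=\sum_{S}(-1)^{|S|}s^{\rank N-\rank S}$, or via Möbius functions of the lattice of flats. The key input is the classical Crapo-type formula $\beta(N)=(-1)^{\rank N-1}\sum_{S}(-1)^{|S|}\rank_N(S)$, applied to $N=M/F$. Comparing coefficients of $s^{d+1-r(G)}$ for each flat $G$, the claim reduces to a Möbius-inversion statement on intervals $[F,G]$ with $0\notin F$. This is a Weisner-type identity that singles out the element $0$ (an atom of the lattice), and I expect it to follow from Weisner's theorem: $\sum_{X\vee a=G}\mu(X,G)=0$ for an atom $a\le G$ with $a\ne\hat 0$.

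Once Step 3 holds, the base cases ($M$ a single coloop or empty, where both sides equal $1$) together with induction finish the proof, since $P$ and $Q$ then satisfy the same recursion.

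The main obstacle is Step 3, namely pinning down the exact coefficient identity. If the direct Weisner route is messy, I would instead try deletion–contraction on the element $0$. Both $\overline{\chi}$ and $\beta$ satisfy deletion–contraction, and flats avoiding $0$ correspond to flats of $M\setminus 0$. Alternatively, I would verify the generating-function identity on $U_{1,2}$, $U_{2,3}$ and $M_{\text{ex}}$ (where $P_M$ must have constant term $\beta=2$ and $Q_M(0)=(-1)^3\overline{\chi}(1)=2$) to calibrate the signs before writing the general argument.
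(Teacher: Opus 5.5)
You have the paper's route: your Step 1 is exactly its recursion over the largest proper flat. You are also right that the induction must run over all loopless matroids, since restrictions $M|_F$ can acquire coloops. But the argument stops where the content of the lemma lies, and the target you derive in Step 2 is miscomputed. Substituting $t=(1-s)/s$ and multiplying by $(-s)^d$, the recursion for $Q_M$ is equivalent to
\[
\overline{\chi}_M(s)=\sum_{\substack{F\text{ flat}\\ 0\notin F}}\beta(M/F)\,(-s)^{d-\rank(F)}\,\chi_{M|_F}(s)=\sum_{\substack{F\text{ flat}\\ 0\notin F}}\overline{\chi}_{M/F}(1)\,s^{d-\rank(F)}\,\chi_{M|_F}(s),
\]
where the $F=\emptyset$ term is $s^d\,\overline{\chi}_M(1)$. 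This is precisely the paper's identity $q_M=\overline{\chi}_M$ (\Cref{lem: formulaforqM}), and no multiplication by $s-1$ is needed. Your display has exponent $d+1-\rank(F)$ and factor $1-s$ instead of $s-1$, so the sum is off by a factor $-s$. The resulting identity $\chi_M(s)=\sum_{0\notin F}c(M/F)\,s^{d+1-\rank(F)}\chi_{M|_F}(s)$ is false for every choice of constants $c$. Indeed, $0\notin F$ forces $\rank(F)\le d$, so every term on the right vanishes at $s=0$, whereas $\chi_M(0)=\pm\mu^+(M)\neq 0$.

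Step 3, which carries all the content, is left as an expectation, and two of the tools you quote are misstated:
\begin{itemize}
\item Crapo's formula is $\beta(N)=(-1)^{\rank N}\sum_S(-1)^{|S|}\rank_N(S)$. For $U_{1,2}$ the alternating sum is $-1$, so your sign gives $\beta=-1$.
\item $\sum_{X\vee a=G}\mu(X,G)=0$ is false: in the lattice of flats of $U_{2,3}$ with $G=E$ and $a=\{0\}$ it equals $-1$. Weisner's theorem concerns $\sum_{X\vee a=G}\mu(\hat 0,X)$.
\end{itemize}
The missing argument, which is what the paper does, runs as follows.
\begin{itemize}
\item Expand $\chi_{M|_F}(s)=\sum_{G\subseteq F}\mu(G)\,s^{\rank F-\rank G}$ and exchange the sums. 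The right-hand side above becomes $\sum_{G:\,0\notin G}\mu(G)\,s^{d-\rank G}\,S(G)$, where $S(G)=\sum_{F\supseteq G,\,0\notin F}(-1)^{\rank M-\rank F-1}\beta(M/F)$.
\item Crapo's formula plus M\"obius inversion on $[H,E]$ gives $\sum_{F\supseteq H,\,F\neq E}(-1)^{\rank M-\rank F}\beta(M/F)=\rank(H)-\rank(M)$. Subtracting the value at $H=\mathrm{cl}(G\cup\{0\})$ from the value at $H=G$ (their ranks differ by one) yields $S(G)=1$. The paper argues this way for $G=\emptyset$ and obtains $G\neq\emptyset$ by induction applied to $M/G$, comparing leading coefficients.
\item Weisner's theorem in the form $\overline{\chi}_M(s)=\sum_{G:\,0\notin G}\mu(G)\,s^{d-\rank G}$ then finishes.
\end{itemize}
Without this computation, the assertion that $P_M$ and $Q_M$ satisfy the same recursion is exactly what remains unproved. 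Checking $U_{1,2}$, $U_{2,3}$ and $M_{\text{ex}}$ cannot replace it.
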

\begin{proof}
We proceed by induction on the rank of $M$. The base case is
$\rank(M) = 1$. In this case,
the only decreasing flag has length 0, so $P_M(t) = 1$, which agrees with the right hand side of the statement since the reduced characteristic polynomial is
$
\overline{\chi}_M(t) = 1
$.

Now let $\rank(M)>1$.
For each summand in $P_M(t)$ corresponding to a flag $\mathcal{F}$ of length $k>0$ we single out the largest proper non-empty flat $F=F_k$ of $\mathcal{F}$. We rewrite the summation defining $P_M(t)$ as a sum over flats $F$ appearing in this way. Notice that by the condition of being decreasing, we always have $0\notin F$. Including the summand $\beta(M)$ for the unique length 0 flag gives us the recursion 
\begin{align*}
    P_M(t)&=\beta(M)+\sum_{\substack{F \text{ flat} \\ 0\notin F\\F\neq \emptyset}}\sum_{\substack{\mathcal{F}' \text{ decr.} \\ \text{ flag in }M|_F}}\beta(M/F)\beta(M|_F[\mathcal{F}'])t^{|\mathcal{F}'|+1} \\
    &=\beta(M)+t\sum_{\substack{F \text{ flat} \\ 0\notin F\\F\neq \emptyset}}\beta(M/F)P_{M|_F}(t)
\end{align*}
Notice that every occurrence $P_{M|_F}(t)$ on the right hand side uses a strictly smaller matroid since we know $0\notin F$. Hence by induction we know \begin{align*}
        P_M(t)=\beta(M)+t\sum_{\substack{F \text{ flat} \\ 0\notin F\\F\neq \emptyset}}\beta(M/F)(-t-1)^{\rank_M(F)-1}\overline{\chi}_{M|_F}\big(\frac{1}{1+t}\big)
\end{align*}
We substitute $t$ by $\frac{1}{t}-1$ and multiply by $(-t)^{\rank(M)-1}$ to get \begin{align*}
    &(-t)^{\rank(M)-1}P_M\big(\frac{1}{t}-1\big)\\&=\beta(M)(-t)^{\rank(M)-1}+(t-1)\sum_{\substack{F \text{ flat} \\ 0\notin F\\F\neq \emptyset}}\beta(M/F)(-t)^{\rank(M)-\rank_M(F)-1}\overline{\chi}_{M|_F}(t) \\
    &=\beta(M)(-t)^{\rank(M)-1}+\sum_{\substack{F \text{ flat} \\ 0\notin F\\F\neq \emptyset}}\beta(M/F)(-t)^{\rank(M)-\rank_M(F)-1}\chi_{M|_F}(t) \\
        &=\sum_{\substack{F \text{ flat} \\ 0\notin F}}\beta(M/F)(-t)^{\rank(M)-\rank_M(F)-1}\chi_{M|_F}(t) \\
\end{align*}
Here in the last step we include again the summand for the empty flat using that $\chi_{M|_\emptyset}(t)=1$. We could not include this summand earlier since it is the only summand where the non-reduced characteristic polynomial is not divisible by $t-1$, hence this summand has no reduced characteristic polynomial. 

By applying the same substitutions to the equality that we want to prove, we see that it remains to show \begin{align*}
    \overline{\chi}_M(t)=\sum_{\substack{F \text{ flat} \\ 0\notin F}}\beta(M/F)(-t)^{\rank(M)-\rank_M(F)-1}\chi_{M|_F}(t) \eqcolon q_M(t)
\end{align*}
which is done in the following \Cref{lem: formulaforqM}.
\end{proof}

\begin{lemma}\label{lem: formulaforqM}
In the notation above we have $q_M(t) = \overline{\chi}_M(t)$.
\end{lemma}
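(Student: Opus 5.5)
The plan is to reduce the identity to Möbius inversion on the lattice of flats of $M$, after rewriting each beta invariant as a sum of Möbius values.

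\textbf{Step 1: a Crapo-type expansion of the reduced characteristic polynomial.} For a loopless matroid $N$ of rank $r\geq 1$ and any non-loop $e$ of $N$, I will first establish
\[
\overline{\chi}_N(t)=\sum_{\substack{G \text{ flat of } N\\ e\notin G}}\mu_N(\emptyset,G)\,t^{r-1-\rank_N(G)} .
\]
To prove it, multiply the right-hand side by $(t-1)$ and compare with $\chi_N(t)=\sum_{G}\mu_N(\emptyset,G)\,t^{r-\rank_N(G)}$. For a flat $G$ with $e\notin G$, the flat $G\vee e=\text{cl}(G\cup e)$ has rank $\rank(G)+1$. The comparison then reduces to
\[
\mu_N(\emptyset,K)=-\sum_{\substack{G\vee e=K\\ e\notin G,\ \rank G=\rank K-1}}\mu_N(\emptyset,G)
\]
for every flat $K\ni e$. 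This is exactly Weisner's theorem applied to the atom $\text{cl}(e)$ in the interval $[\emptyset,K]$: the flats $G$ with $G\vee e=K$ are either $K$ itself or the hyperplanes of $[\emptyset,K]$ not containing $e$.

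Evaluating Step 1 at $t=1$, and using $\beta(N)=(-1)^{r-1}\overline{\chi}_N(1)$, gives
\[
(-1)^{r-1}\beta(N)=\sum_{e\notin G}\mu_N(\emptyset,G).
\]

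\textbf{Step 2: rewrite $q_M$.} Let $r=\rank(M)$ and fix a flat $F$ with $0\notin F$. Then $M/F$ is loopless of rank $r-\rank_M(F)\geq 1$, and $0$ is a non-loop of $M/F$. Its flats correspond to flats $G\supseteq F$ of $M$, with $\mu_{M/F}(\emptyset,G\setminus F)=\mu_M(F,G)$. Applying Step 1 at $t=1$ to $N=M/F$ and $e=0$ gives
\[
(-t)^{r-\rank_M(F)-1}\beta(M/F)=t^{r-1-\rank_M(F)}\sum_{\substack{G\supseteq F\\ 0\notin G}}\mu_M(F,G).
\]
Expanding also $\chi_{M|_F}(t)=\sum_{H\subseteq F}\mu_M(\emptyset,H)\,t^{\rank(F)-\rank(H)}$, we obtain
\[
q_M(t)=\sum_{\substack{H\subseteq F\subseteq G\\ 0\notin G}}\mu_M(\emptyset,H)\,\mu_M(F,G)\,t^{r-1-\rank(H)} .
\]
Here the condition $0\notin F$ is automatic from $F\subseteq G$.

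\textbf{Step 3: Möbius inversion.} For fixed $H\subseteq G$, the inner sum $\sum_{F\in[H,G]}\mu(F,G)$ equals $\delta_{H,G}$. Hence
\[
q_M(t)=\sum_{0\notin H}\mu_M(\emptyset,H)\,t^{r-1-\rank(H)}.
\]
By Step 1 applied to $N=M$ and $e=0$ (valid since $M$ is loopless), this is $\overline{\chi}_M(t)$.

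The main obstacle I expect is Step 1, specifically the careful application of Weisner's theorem and the bookkeeping of ranks. The rest is formal. As a fallback, Step 1 can instead be proved by deletion–contraction of the element $e$.
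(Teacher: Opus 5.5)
Your argument is correct, but it is organized differently from the paper's.

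The paper inducts on the rank. After expanding $\chi_{M|_F}$ and exchanging the order of summation, it must show that for every flat $G$ with $0\notin G$ the inner sum $\sum_{F\supseteq G,\,0\notin F}(-1)^{\rank(M)-\rank_M(F)-1}\beta(M/F)$ equals $1$. For $G\neq\emptyset$ it gets this from the induction hypothesis applied to $M/G$, by comparing leading coefficients. For $G=\emptyset$ it uses a separate M\"obius-inversion identity for beta invariants. Only at the very end does it quote the expansion $\overline{\chi}_M(t)=\sum_{0\notin G}\mu(G)\,t^{\rank(M)-\rank_M(G)-1}$ from White's book.

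You instead establish that expansion first (your Step 1, via Weisner's theorem) and use it twice. Evaluated at $t=1$ for each $M/F$, it rewrites $\beta(M/F)$ as a M\"obius sum over the interval above $F$. The resulting triple sum then collapses by $\sum_{F\in[H,G]}\mu(F,G)=\delta_{H,G}$, and the expansion applied to $M$ itself finishes the proof.

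In effect you prove the paper's inner-sum identity directly and uniformly in $G$, with no induction and no case split. You also sidestep the auxiliary beta-inversion identity, which read literally involves the undefined $\beta(M/E)$. That is harmless in the paper, since the term cancels in the difference it actually uses. The paper's route reuses the statement being proved to dispatch most of the inner sums, but still needs that extra identity for $G=\emptyset$. Yours is shorter and self-contained once the standard expansion of $\overline{\chi}$ is available.
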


\begin{proof}
We prove the statement by induction on the rank of $M$. 

Suppose $\rank(M) = 1$. Since $M$ has no loops, its only flats are $\emptyset$ and $M$, which contains $0$, so $q_M(t) = \beta(M/\emptyset)\chi_{M|_\emptyset}(t) = 1 =  \overline{\chi}_M(t)$.

By definition of the characteristic polynomial of a matroid,
\begin{equation}\label{eq: intermediateqM}
    \begin{split}
    q_M(t) & = \sum_{\substack{F \text{ flat} \\ 0\notin F}}\beta(M/F)(-t)^{\rank(M)-\rank_M(F)-1}\sum_{\substack{G \text{ flat} \\ \text{of } M|_F}}   \mu(G) t^{\rank(M|_F) - \rank_{M|_F}(G)}
     \\ & = \sum_{\substack{F \text{ flat} \\ 0\notin F}}\beta(M/F)(-1)^{\rank(M)-\rank_M(F)-1} t^{\rank(M)-\rank_M(F)-1}\sum_{\substack{G \text{ flat} \\ \text{of } M|_F}}   \mu(G) t^{\rank_M(F) - \rank_M(G)}
     \\ & = \sum_{\substack{F \text{ flat} \\ 0\notin F}}\beta(M/F) (-1)^{\rank(M)-\rank_M(F)-1} t^{\rank(M)-1}\sum_{\substack{G \text{ flat} \\ \text{of } M|_F}}  \mu(G) t^{- \rank_M(G)}
     \\ & = \sum_{\substack{G \text{ flat} \\ \text{of } M \\ 0\notin G}} \mu(G) t^{\rank(M) - \rank_M(G) - 1} \sum_{\substack{F \text{ flat of } M \\ 0\notin F \\ G \subset F}}(-1)^{\rank(M)-\rank_M(F)-1} \beta(M/F)
     \\ & = \sum_{\substack{G \text{ flat} \\ \text{of } M \\ 0\notin G}} \mu(G) t^{\rank(M) - \rank_M(G) - 1} \sum_{\substack{F \text{ flat}  \\\text{of } M/G \\ 0\notin F}}(-1)^{\rank(M/G)-\rank_{M/G}(F)-1} \beta((M/G)/F)
     \end{split}
\end{equation}
Let $G$ be a non-empty flat of $M$. Since $\rank(M/G)< \rank(M)$, we can use our induction hypothesis. Notice that contracting by a flat preserves looplessness.
\begin{align*}
     \overline{\chi}_{M/G}(t) &= \sum_{\substack{F \text{ flat} \\ \text{of } M/G  \\ 0\notin F}}\beta((M/G)/F)(-t)^{\rank(M/G)-\rank_{M/G}(F)-1}\chi_{(M/G)|_F}(t) 
\end{align*}
The degree of $\chi_{(M/G)|_F}(t)$ is $\rank((M/G)|_F) = \rank_{M/G}(F)$, and its leading term is 1. So the leading term of the polynomial on the right hand side of the equality is 
\[
\sum_{\substack{F \text{ flat} \\ \text{of } M/G  \\ 0\notin F}}\beta((M/G)/F)(-1)^{\rank(M/G)-\rank_{M/G}(F)-1}
\]
Since the leading coefficient of $ \overline{\chi}_{M/G}(t)$ is 1, we get
\begin{equation}\label{eq: inductiveStep}
\sum_{\substack{F \text{ flat} \\ \text{of } M/G \\ 0\notin F}}\beta((M/G)/F)(-1)^{\rank(M/G)-\rank_{M/G}(F)-1} = 1.
\end{equation}

Consider now $G = \emptyset$. The corresponding summand in the right hand side of equation \eqref{eq: intermediateqM} is
\[
t^{\rank(M)-1} \sum_{\substack{F \text{ flat} \\ \text{of } M  \\ 0\notin F}}\beta(M/F)(-1)^{\rank(M)-\rank_{M}(F)-1}.
\]
Using Möbius inversion (\cite[Section 7.3]{WhiteCombGeo1987}), for any flat $H$ of $M$,
\[
\sum_{\substack{F \text{ flat} \\ \text{of } M  \\ H\subseteq F}} (-1)^{\rank(M)-\rank_M(F)} \beta(M/F) = \rank(H).
\]
Applying this to $H=\text{cl}_M(\{0\})$ and $H = \emptyset$ respectively, we obtain that 
\begin{equation}\label{eq: emptyset}
    \begin{split}
    &\sum_{\substack{F \text{ flat} \\ \text{of } M  \\ 0\notin F}}\beta(M/F)(-1)^{\rank(M)-\rank_{M}(F)-1}
    \\
    =&\sum_{\substack{F \text{ flat} \\ \text{of } M }}\beta(M/F)(-1)^{\rank(M)-\rank_{M}(F)-1}
    -
    \sum_{\substack{F \text{ flat} \\ \text{of } M  \\ 0\in F}}\beta(M/F)(-1)^{\rank(M)-\rank_{M}(F)-1}
    \\= 
    &-\rank(\emptyset)-(-\rank(\text{cl}_M(\{0\}))) 
    = 1
    \end{split}
\end{equation}
By substituting equations \eqref{eq: inductiveStep} and \eqref{eq: emptyset} into expression \eqref{eq: intermediateqM} we obtain that
\begin{align}\label{eq: intermediateqM2}
q_M(t) = \sum_{\substack{G \text{ flat} \\ \text{of } M \\ 0 \notin G,G\neq \emptyset}} \mu(G) t^{\rank(M) - \rank(G) - 1} \cdot 1 + t^{\rank(M)-1} \cdot 1
\end{align}

By \cite[Corollary 7.2.7]{WhiteCombGeo1987}, 
\[
\overline{\chi}_M(t) = \sum_{\substack{G \text{ flat} \\ \text{of } M \\ 0 \notin G}} \mu(G) t^{\rank(M) - \rank(G) - 1} = \sum_{\substack{G \text{ flat} \\ \text{of } M \\ 0 \notin G, G \neq \emptyset}} \mu(G) t^{\rank(M) - \rank(G) - 1} + t^{\rank(M)-1}=q_M(t)
\]

\end{proof}



We can now prove our main result, \Cref{thm:intro:polardegrees}. Notice that contrary to the standing assumption in \Cref{sec:cycling}, $M=M(L)$ may have coloops.

\begin{proof}[Proof of \Cref{thm:intro:polardegrees}]
    
    Since $L$ is not contained in a coordinate hyperplane, the matroid $M$ is loopless. We start by reducing to the coloopless case.

    Suppose that $M$ has a coloop, say $\{0\}$, and suppose that \Cref{thm:intro:polardegrees} holds for $L' \coloneq L \cap \PP(\KK^{E\setminus\{0\}}) \subseteq \PP^{n-1}$ which satisfies $M' \coloneq \Matroid(L') = \Matroid(L)/\{0\}$. $L^{-1}$ is then a projective cone over $L'^{-1}$ with vertex $[1:0:\dots:0]$, or, in the notation of \Cref{lem:convolution_of_polardegs},
    \[
    L^{-1} = J(\PP^0,L'^{-1}) \subseteq \PP(\KK^{1+n}).
    \]
    Applying said lemma, $\mu_i(L^{-1}) = \mu_i(L'^{-1})\cdot \mu_0(\PP^0) = \mu_i(L'^{-1})$ for $i=0,\dots,d-1$ and $\mu_d(L^{-1}) = 0$.
    Since $0$ is a coloop, $\rank(M') = \rank(M)-1$, and $\overline{\chi}_M(t) = (t-1)\overline{\chi}_{M'}(t)$.
    Therefore,
    \begin{align*}
    (-2t-1)^d\overline{\chi}_{M}\big(\frac{2t}{2t+1}\big) =& (-2t-1)^{\rank(M')}\big(\frac{2t}{2t+1}-1\big)\overline{\chi}_{M'}\big(\frac{2t}{2t+1}\big) \\
    =&(-2t-1)^{\rank(M')-1}\overline{\chi}_{M'}\big(\frac{2t}{2t+1}\big) \\
    =& \sum_{j = 0}^{d-1} \mu_j(L'^{-1})t^j\\
    =& \sum_{j = 0}^d \mu_j(L^{-1})t^j.
    \end{align*}
    
    All that is left to do is to prove the equality of $\Cref{thm:intro:polardegrees}$ in the coloopless case.
    By \Cref{prop:gettingridof2}, \Cref{intersectionNumberToFlagBeta} and \Cref{lem: polynomialFormula},
    \[
    \sum_{i \geq 0} \frac{\delta_i}{2^{d-i}}(L^{-1})t^i =
    (-t-1)^{\rank M - 1}\overline{\chi}\left(\frac{1}{t+1}\right).
    \]
    We obtain the desired expression by substituting $t$ by $t/2$. The polar degrees follow from the equality $\mu_j = \delta_{d-j}$, $j = 0,\dots, d$.
\end{proof}

\section{Consequences and Generalizations}
\label{sec:applications}

\subsection{Polar, distance, and Chern degrees}\label{sec:degreeexamples}

We begin this section with an example to illustrate \Cref{thm:intro:polardegrees}.

\begin{example}
    Let $L_{\text{ex}}\subseteq \PP^6$ be the 3-dimensional linear subspace defined in \Cref{ex: matroidex} and let $M_{\text{ex}}=M(L_{\text{ex}})$ be its matroid. The reciprocal linear space $L_{\text{ex}}^{-1}\subseteq \PP^6$ is a projective variety of dimension 3 and degree $10=\mu^+(M_{\text{ex}})$. Its conormal variety is a 5-dimensional subvariety of $\PP^6\times (\PP^6)^*$. The multidegree of this variety is given by \begin{align*}
[\Con(L_{\text{ex}}^{-1})]=\mu_0t_1^3t_2^4+\mu_1t_1^4t_2^3+\mu_2t_1^5t_2^2+\mu_3t_1^6t_2
    \end{align*}
where $t_1,t_2$ are the two hyperplane classes in $A^\bullet(\PP^6 \times (\PP^6)^*)$. We determine the coefficients $\mu_0,\ldots,\mu_3$ via \Cref{thm:intro:polardegrees}. Indeed, the theorem gives us the following identity of polynomials in one unknown $t$:
\begin{align*}
\mu_3t^3+\mu_2t^2+\mu_1t+\mu_0&=(-2t-1)^3\overline{\chi}_{M_{\text{ex}}}\left(\frac{2t}{2t+1}\right) \\
&=16t^3+40t^2+34t+10
\end{align*}
Here, the last equality comes from using the reduced characteristic polynomial of $M_{\text{ex}}$ that we computed in \Cref{ex: matroidexcontinued}. In particular the degree of $(L^{-1})^\vee$ is 
\[
\deg(L^{-1})^\vee=16=2^3\cdot 2=2^d\beta(M_{\text{ex}}).\qedhere
\]
\end{example}

\begin{example}[Uniform matroids]
\label{ex:uniform}
A generic linear subspace $L\subseteq \PP^n$ has matroid $M = U_{d+1,n+1}$. Then its reduced characteristic polynomial has the particularly simple form
\[
\overline{\chi}_{U_{d+1,n+1}}(t) = \sum_{i=0}^d (-1)^{d-i}\binom{n}{d-i} t^i.
\]
Applying \Cref{thm:intro:polardegrees}, substituting $t\mapsto 2t$, and clearing denominators, we obtain
\begin{align*}
\sum_{i=0}^d \delta_i(L^{-1})\frac{t^i}{2^{d-i}} &= (-t-1)^d \overline{\chi}_M\left(\frac{1}{t+1}\right) \\
&= (-1)^d\sum_{i=0}^d (-1)^{d-i} \binom{n}{d-i} (t+1)^{d-i} \\
&= \sum_{j=0}^d (-1)^{d-j} \binom{n}{j}(t+1)^j.
\end{align*}
After binomial expansion, the coefficient of $t^k$ in the previous expression is given by
\[
\sum_{j=k}^d (-1)^{d-j} \binom{n}{j} \binom{j}{k} = \sum_{j=k}^d (-1)^{d-j} \binom{n}{k} \binom{n-k}{j-k} = \binom{n}{k} \binom{n-k-1}{d-k}.
\]
Thus,
\[
\delta_k(L^{-1}) = 2^{d-k} \binom{n}{k} \binom{n-k-1}{d-k}, \qquad \mu_i(L^{-1}) = 2^i \binom{n}{d-i} \binom{n-d-1+i}{i}.
\]
In particular, since the uniform matroid is connected, $\deg (L^{-1})^\vee = \delta_0(L^{-1}) = 2^d \binom{n-1}{d}$, in coherence with \cite[Theorem 4.11]{MatsubaraHeoTelen2026}.
\end{example}

\Cref{thm:intro:polardegrees} gives a formula for all polar degrees in the form of the generating function of $(\mu_i)_i$ resp.\ $(\delta_i)_i$. This yields the promisedalternative proof of \Cref{con:dualdef,con:dualdeg} which we proved in \Cref{thm: defectiveness,thm: dualdegrees}, about defectivity and dual degree.

\begin{proof}[Proof of \Cref{cor:introconjs}]
We use the second formula of \Cref{thm:intro:polardegrees} to get
\begin{align*}
\delta_0(L^{-1})=\bigg(\sum_{i=0}^d\delta_i(L^{-1})t^i\bigg)\bigg|_{t=0}=(-2)^d\overline{\chi}_M\left(\frac{2}{2}\right)=2^d\beta(M)
\end{align*}
A variety is dual defective if and only if $\delta_0(X)=0$, hence $L^{-1}$ is dual defective if and only if $\beta(M) = 0$. Since the groundset of $M$ has size $n+1>1$, this holds if and only if $M$ is disconnected. In the non-defective case, one has $\delta_0(X) = \deg(X^\vee)$, hence $\deg(L^{-1})^\vee = 2^d \beta(M)$.
\end{proof}
With slightly more care, we can also give the dual defect and dual degeree in the disconnected case.
\begin{corollary}
The codimension of $(L^{-1})^\vee$ in $(\PP^n)^*$ equals the number of connected components $c(M)$. The degree is
\[
\deg (L^{-1})^\vee = 2^{d+1-c(M)}\left.\frac{\chi_M(t)}{(t-1)^{c(M)}}\right|_{t=1} = \prod_{C \textup{ conn.\ comp.\ of }M} 2^{\rank(C)-1} \beta(C).
\]
Furthermore, the polar degrees of $M$ are the convolution of the polar degrees of its connected components.
\end{corollary}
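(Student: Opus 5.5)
We would decompose $M$ into its connected components and reduce everything to the connected case, which is already settled. Write $E = E_1\sqcup\dots\sqcup E_c$ with $c=c(M)$ and $M = M_1\oplus\dots\oplus M_c$, $M_i = M|_{E_i}$. Correspondingly $\widehat L = \widehat L_1\oplus\dots\oplus\widehat L_c$, and by the example following \Cref{lem:convolution_of_polardegs}, iterated, $L^{-1} = J(L_1^{-1},J(L_2^{-1},\dots))$. By \Cref{lem:convolution_of_polardegs} the polar degree generating function of $L^{-1}$ is the product $\prod_i \sum_j \mu_j(L_i^{-1})t^j$. This already gives the last assertion about convolution. (One could also check it against \Cref{thm:intro:polardegrees}: $\overline\chi_M=(t-1)^{c-1}\prod_i\overline\chi_{M_i}$ and $d+1=\sum_i(d_i+1)$, so $(-2t-1)^d\overline\chi_M(\tfrac{2t}{2t+1}) = \prod_i (-2t-1)^{d_i}\overline\chi_{M_i}(\tfrac{2t}{2t+1})$ after absorbing $(\tfrac{2t}{2t+1}-1)^{c-1}=(-1)^{c-1}(2t+1)^{1-c}$.)

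For the codimension, we use the conormal description in \Cref{lem:convolution_of_polardegs}. A hyperplane $z=(z_1,\dots,z_c)$ lies in $(L^{-1})^\vee$ iff each block $z_i$ lies in the affine cone of $(L_i^{-1})^\vee$, where the zero vector is allowed. So $\widehat{(L^{-1})^\vee}=\prod_i \widehat{(L_i^{-1})^\vee}$.

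We treat each component $M_i$ according to its size.
\begin{itemize}
\item If $|E_i|\ge 2$, then $M_i$ is connected, hence loopless and coloopless, and $(L_i^{-1})^\vee$ is a hypersurface of degree $2^{d_i}\beta(M_i)$ by \Cref{thm: defectiveness,thm: dualdegrees}.
\item If $|E_i|=1$, then $M_i$ is a coloop (loops are excluded), $L_i^{-1}=\PP^0$, and its dual cone is $\{0\}\subseteq\KK^1$. This is again a codimension-one condition, with degree $1=2^{0}\beta(U_{1,1})$.
\end{itemize}
Hence $\widehat{(L^{-1})^\vee}$ is a product of affine hypersurface cones, one in each block. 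Its codimension is therefore $c$, and its degree is the product of the factor degrees, since the defining ideal is generated by the $c$ polynomials in disjoint variables, which form a complete intersection. This gives the product formula $\prod_C 2^{\rank(C)-1}\beta(C)$.

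It remains to identify this product with the middle expression. Since $\chi_M=\prod_i\chi_{M_i}$ and $\chi_{M_i}=(t-1)\overline\chi_{M_i}$, we get $\chi_M/(t-1)^c=\prod_i\overline\chi_{M_i}$. At $t=1$ this equals $\prod_i(-1)^{d_i}\beta(M_i)$. The sign should be checked carefully: $\sum_i d_i = d+1-c$, so $\prod_i(-1)^{d_i}=(-1)^{d+1-c}$. The clean statement therefore presumably uses $(-2)^{d+1-c}$, or absolute values. Likewise $\prod_i 2^{d_i}=2^{d+1-c}$.

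The main subtle point is the degree multiplicativity for the product of cones and the treatment of coloop components. We handle it by noting that the ideals live in disjoint sets of variables, so the Hilbert series multiply exactly as in the proof of \Cref{lem:convolution_of_polardegs}.
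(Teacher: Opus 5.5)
Your proof is correct. Your sign check is also right: as printed, the middle expression is off by $(-1)^{d+1-c(M)}$. Already for $M=U_{2,3}$ it gives $2\,\overline{\chi}_M(1)=-2$, while the dual of a conic is a conic of degree $2$. So it should read $(-2)^{d+1-c(M)}\left.\chi_M(t)/(t-1)^{c(M)}\right|_{t=1}$, which matches the paper's own computation $(-2)^d\overline{\chi}_M(1)=2^d\beta(M)$ in the connected case.

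Your route differs from the paper's in how the codimension and the degree are obtained. The paper's one-line proof works entirely with the polar-degree sequence, in one of two ways:
\begin{enumerate}
\item convolve the components' sequences using the join lemma together with the connected case;
\item substitute $\overline{\chi}_M=(t-1)^{c(M)-1}\prod_C\overline{\chi}_C$ into the main theorem.
\end{enumerate}
Either way, $\sum_i\delta_i t^i$ equals $t^{c(M)-1}$ times a polynomial with nonzero constant term. One then invokes the general fact that the dual defect equals the number of vanishing leading $\delta_i$, and that the first nonzero one is $\deg X^\vee$. In the second variant, that first nonzero coefficient is exactly $(-2)^{d+1-c(M)}\left.\chi_M(t)/(t-1)^{c(M)}\right|_{t=1}$, which explains why the middle expression has that shape.

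You instead identify the dual variety itself. From the conormal description in the join lemma you get $\widehat{(L^{-1})^\vee}=\prod_i\widehat{(L_i^{-1})^\vee}$, a complete intersection of the component discriminants in disjoint sets of variables, with linear forms for the coloop blocks. Codimension and degree then follow from elementary commutative algebra, without the polar-degree/dual-defect dictionary.

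What your route buys is explicit equations for $(L^{-1})^\vee$ and an explicit treatment of coloop components, which the paper's sketch leaves implicit. The cost is that the middle expression has to be matched by a separate computation, rather than appearing directly as a coefficient of the generating function.
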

\begin{proof}
This can either be deduced from the connected case (\Cref{cor:introconjs}) together with the convolution identity from \Cref{lem:convolution_of_polardegs}, or by the fact that the characteristic polynomial is multiplicative under direct sums:
\[
\chi_M(t) = \prod_{C} \chi_{C}(t), \qquad \overline{\chi}_M(t) = (t-1)^{c(M)-1} \prod_{C} \overline{\chi}_{C}(t). \qedhere
\]
\end{proof}

Furthermore, \Cref{thm:intro:polardegrees} also gives a new proof of the expression of the first two polar degrees of reciprocal linear spaces in terms of matroid invariants. For any projective variety $X\subseteq \PP^n$, the zeroth polar degree $\mu_0(X)$ is the degree of the variety $X$, while the next polar degree $\mu_1(X)$ is the degree of the Hurwitz form $\operatorname{Hu}_X$ \cite[Theorem 9]{Kohn2021}.

\begin{corollary}
\begin{enumerate}
\item The degree of $L^{-1}$ is $\mu_0(L^{-1}) = \mu^+(M)$.
\item The Hurwitz degree of $L^{-1}$ is $\mu_1(L^{-1}) = 2(-1)^{\rank(M)}(\rank(M)\chi_M(0) + \chi'_M(0))$.
\end{enumerate}
\end{corollary}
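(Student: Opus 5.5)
Both parts come from reading off low-order coefficients of the generating function in \Cref{thm:intro:polardegrees}. I would write $r\coloneq\rank(M)=d+1$ and $\chi_M(s)=\sum_{j=0}^{r} a_j s^j$, with $a_0=\chi_M(0)$ and $a_1=\chi_M'(0)$. Since $\overline{\chi}_M(s)=\chi_M(s)/(s-1)$, substituting $s=\frac{2t}{2t+1}$ gives $s-1=-\frac{1}{2t+1}$, and therefore
\[
(-2t-1)^d\,\overline{\chi}_M\Big(\frac{2t}{2t+1}\Big)=(-1)^{d+1}(2t+1)^{d+1}\chi_M\Big(\frac{2t}{2t+1}\Big)=(-1)^{r}\sum_{j=0}^{r}a_j(2t)^j(2t+1)^{r-j}.
\]
This expression is a genuine polynomial in $t$. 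The whole proof is expanding it to first order in $t$.

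For part (i), I set $t=0$. Only the $j=0$ term survives, so $\mu_0(L^{-1})=(-1)^{r}a_0=(-1)^{d+1}\chi_M(0)=(-1)^{d+1}\mu(E)=\mu^+(M)$ by the definition of the unsigned Möbius invariant, since $M$ is loopless. Because $\mu_0(X)=\deg X$, this gives the degree.

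For part (ii), I take the coefficient of $t$. The $j=0$ term contributes $a_0\cdot 2r$, from $(2t+1)^r=1+2rt+\dots$. The $j=1$ term contributes $2a_1$. Terms with $j\ge2$ contribute nothing at this order. Hence $\mu_1(L^{-1})=(-1)^r(2r\chi_M(0)+2\chi_M'(0))=2(-1)^{\rank(M)}(\rank(M)\chi_M(0)+\chi_M'(0))$. The identification of $\mu_1$ with the Hurwitz degree is the cited fact from \cite{Kohn2021}.

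The only points needing care are these. First, \Cref{thm:intro:polardegrees} needs $0\le d<n$ and $L$ not contained in a coordinate hyperplane; these are the standing hypotheses, and they also make $M$ loopless. Second, in the degenerate case $d=0$ the statement about $\mu_1$ should be read with $\mu_1=0$, and the formula should give this automatically since $\chi_M(s)=s-1$ there: $r=1$, $\chi_M(0)=-1$, $\chi_M'(0)=1$, so the expression is $2(-1)(-1+1)=0$, as it should be. There is no real obstacle beyond this bookkeeping with the rational substitution.
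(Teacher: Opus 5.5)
Your proposal is correct and is essentially the paper's proof: both read off the coefficients of $t^0$ and $t^1$ in the generating function of \Cref{thm:intro:polardegrees}. The only difference is bookkeeping. You first clear the pole via $s-1=-\frac{1}{2t+1}$ to obtain the explicit polynomial $(-1)^{\rank(M)}\sum_j a_j(2t)^j(2t+1)^{\rank(M)-j}$, whereas the paper differentiates the $\overline{\chi}_M$-form directly and then uses $\overline{\chi}_M(0)=-\chi_M(0)$ and $\overline{\chi}_M'(0)=-\chi_M'(0)-\chi_M(0)$.
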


The formula for $\deg L^{-1}$ first appeared in \cite{orlik1992arrangements}.
Its Hurwitz degree was first mentioned in \cite[Example 4.1]{sturmfels2016hurwitzformprojectivevariety}, which references \cite[Proof of Proposition 33]{Sanyal2013}.

\begin{proof}
To recover the degree of $L^{-1}$ we evaluate the formula for the generating function of the polar degrees derived in \Cref{thm:intro:polardegrees} at $t=0$:
\[
\mu_0(L^{-1}) = (-1)^d\overline{\chi}_M(0) = \mu^+(M).
\]
To recover the Hurwitz degree, we first differentiate this generating function, obtaining
\[
\sum_{j=1}^dj\cdot \mu_j(L^{-1})t^{j-1} = -2d(-2t-1)^{d-1} \overline{\chi}_M\bigg(\frac{2t}{2t+1}\bigg) + \frac{2(-2t-1)^d}{(2t+1)^2} \overline{\chi}'_M\bigg(\frac{2t}{2t+1}\bigg).
\]
We now evaluate this equality at $t=0$, and we get
\begin{align*}
\mu_1(L^{-1}) &= 2d(-1)^{d}\overline{\chi}_M(0) + 2(-1)^{d}\overline{\chi}'_M(0)\\ 
&= 2d(-1)^{d+1}\chi_M(0) + 2(-1)^d(-\chi_M'(0)-\chi_M(0)) \\
&= 2(-1)^{d+1}(d\chi_M(0)+\chi_M'(0)+\chi_M(0)) \\
&= 2(-1)^{d+1}((d+1)\chi_M(0)+\chi_M'(0))
\end{align*}
In the second step we use the identities $\overline{\chi}_M(0) = -\chi_M(0)$ and $\overline{\chi}_M'(0) = -\chi_M'(0) - \chi_M(0)$.
\end{proof}

Next, we discuss an application to Euclidean distance optimization. If $X \subseteq \CC^N$ is an irreducible variety and $Q \in S^2(\CC^N)^*$ a non-degenerate quadric, then the \emph{distance degree} $\DD_Q(X)$ is the number of critical points of $x \mapsto Q(x-u)$ on $X_\reg$ for general $u \in \CC^N$ \cite{Draisma2015}. The distance degree of a projective variety $X \subseteq \PP^{N-1}$ is that of its affine cone. If $Q$ is chosen generally, then the distance degree attains its maximum value (among all quadrics); this value is the \emph{generic distance degree} $\DDgen(X)$.

\begin{theorem}
We have $\DDgen(L^{-1}) = (-3)^d \overline{\chi}_M(\frac{2}{3})$.
\end{theorem}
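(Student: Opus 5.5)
The generic distance degree of a projective variety is the sum of its polar degrees. This is the standard identity $\DDgen(X)=\sum_{j}\mu_j(X)=\sum_i \delta_i(X)$ from \cite{Draisma2015}, valid for any irreducible $X\subseteq\PP^{N-1}$ when $Q$ is general; it holds because a general quadric is transversal to the conormal variety. The plan is therefore to combine this identity with \Cref{thm:intro:polardegrees} and simply evaluate the generating function.

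The key steps are as follows. First, recall and check the hypotheses of the identity $\DDgen(X)=\sum_{j=0}^{\dim X}\mu_j(X)$. In \cite{Draisma2015} this is phrased via the bidegrees of $\Con(X)$ and requires only that $Q$ be general, so it applies to $X=L^{-1}$. The indexing convention for polar degrees there has to be matched with ours, but since we sum over all indices this only matters up to a reversal of the sequence.

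Second, substitute $t=1$ into the first formula of \Cref{thm:intro:polardegrees}:
\[
\sum_{j=0}^d \mu_j(L^{-1}) = (-2-1)^d\,\overline{\chi}_M\Bigl(\frac{2}{3}\Bigr)=(-3)^d\,\overline{\chi}_M\Bigl(\frac{2}{3}\Bigr).
\]
As a consistency check, the second formula at $t=1$ gives $(-3)^d\overline{\chi}_M(2/3)$ as well.

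The main obstacle is the first step: confirming that the sum-of-polar-degrees formula applies in our setting. One concern is whether the Euclidean distance degree literature requires $X\not\subseteq$ the isotropic quadric, or transversality conditions that hold automatically only for general $Q$. For general $Q$ these are harmless, since we may change coordinates by a general linear map, and polar degrees are projectively invariant.

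A second concern is that \Cref{thm:intro:polardegrees} assumes $d<n$ and that $L$ is not contained in a coordinate hyperplane; these are exactly the standing assumptions. The edge case $d=0$ is trivial: a point has $\DDgen=1$, and $\overline{\chi}_M=1$ there.
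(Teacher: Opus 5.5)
Your proposal is correct and follows the paper's proof exactly: the paper cites \cite[Theorem 5.4]{Draisma2015} for the fact that the generic distance degree is the sum of the polar degrees, then evaluates the generating function of \Cref{thm:intro:polardegrees} at $t=1$ to get $(-3)^d\overline{\chi}_M(\frac{2}{3})$. The extra checks you add (the hypotheses of the cited result for general $Q$, the reindexing, and the case $d=0$) are harmless and correct.
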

\begin{proof}
The generic distance degree of a projective variety is the sum of its polar degrees \cite[Theorem 5.4]{Draisma2015}. Thus, evaluating the polar polynomial from \Cref{thm:intro:polardegrees} at $t=1$ yields the generic distance degree.
\end{proof}

Finally, we give a combinatorial expression for the degrees of the Chern--Mather classes of $L^{-1}$. Recall that if $X$ is a potentially singular projective variety, MacPherson \cite{MacPherson1974} defined the \emph{Chern--Mather classes} $c^{\mathrm{M}}(X) \in \mathrm{A}_\bullet(X)$ via the Nash blow-up $\nu\colon \widetilde{X} \to X$ as $\nu_*(c({\mathcal T}_{\widetilde{X}}) \cap [\widetilde{X}])$ (see also \cite[Example 4.2.9]{Fulton1998}). On smooth varieties, it recovers the usual Chern class $c(X) = c(\mathcal{T}_X) \in \mathrm{A}^\bullet(X)$ as $c^{\mathrm{M}}(X) = c(X)\cap [X]$.

Piene \cite[Theorème 3]{Piene1988} showed that the polar classes $P_k(X) \in \mathrm{A}_{d-k}(X)$ and Chern--Mather classes $c^{\mathrm{M}}_i(X) \in \mathrm{A}_{d-i}(X)$ of a $d$-dimensional variety are related by the (involutive) operation
\[
c^{\mathrm{M}}_{k}(X) = \sum_{i=0}^k (-1)^{d-i}\binom{d+1-k+i}{i} \cdot [H]^i \cap P_{k-i}(X).
\]
The \emph{Chern--Mather degrees} of a projective variety $\iota \colon X \hookrightarrow \PP^n$ are then given by
\[
c_{\mathrm{Ma}}(X) \coloneq  \iota_*c^{\mathrm{M}}(X) = \sum_{i=0}^d \deg_{\PP^n} c^{\mathrm{M}}_{i}(X) \cdot h_1^{n-d+i} \in \ChowGrp^\bullet(\PP^n) = \ZZ[h_1]/\langle h_1^{n+1}\rangle.
\]
Piène's formula allows to express Chern--Mather degrees in terms of polar degrees, this allows for a combinatorial computation of $c_{\mathrm{Ma}}(L^{-1})$:

\begin{theorem}\label{thm:ChernMather}
The degrees of the Chern--Mather classes of $L^{-1}$ can be computed as follows
\[
c_{\mathrm{Ma}}(L^{-1}) = (1+h_1)h_1^{n-d}(h_1-1)^d\overline{\chi}_M\left(\frac{2h_1}{h_1-1}\right)\in  \ChowGrp^\bullet(\PP^n) = \ZZ[h_1]/\langle h_1^{n+1}\rangle
\]
\end{theorem}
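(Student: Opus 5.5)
The plan is to substitute the generating function of polar degrees from \Cref{thm:intro:polardegrees} into Piene's formula and resum. Let
\[
P(s) \coloneq \sum_{j=0}^d \mu_j(L^{-1}) s^j = (-2s-1)^d\,\overline{\chi}_M\Big(\frac{2s}{2s+1}\Big).
\]
Since $\deg_{\PP^n}([H]^i\cap P_{k-i}(X)) = \deg P_{k-i}(X) = \mu_{k-i}(X)$, taking degrees in Piene's formula expresses each $\deg c^{\mathrm{M}}_k(L^{-1})$ as a signed binomial combination of $\mu_0,\dots,\mu_k$. Write $j=k-i$ and $m=k-j$, so the binomial coefficient becomes $\binom{d+1-j}{m}$. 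Interchanging the order of summation gives
\[
c_{\mathrm{Ma}}(L^{-1}) = h_1^{n-d}\sum_{j=0}^d \mu_j\, h_1^j \sum_{m=0}^{d-j} (\pm 1)\binom{d+1-j}{m} h_1^m .
\]

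The key observation is that the inner sum may be extended to $m=d+1-j$ for free. The extra term has total degree $k=d+1$, so it multiplies $h_1^{n-d+d+1}=h_1^{n+1}$, which is $0$ in $\ChowGrp^\bullet(\PP^n)$. After this extension the inner sum is a complete binomial expansion.

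For the target formula, the signs must combine so that
\[
c_{\mathrm{Ma}}(L^{-1}) = h_1^{n-d}\sum_{j=0}^d \mu_j (-h_1)^j (1+h_1)^{d+1-j} = h_1^{n-d}(1+h_1)^{d+1} P\Big(\frac{-h_1}{1+h_1}\Big).
\]
Equivalently, the $j$-th summand should carry total sign $(-1)^j$ and the binomials in $m$ should enter with positive sign. This is the step requiring most care: Piene's formula appears in the literature with differing sign normalisations, namely $(-1)^i$ versus $(-1)^{d-i}$, together with a convention on the sign of polar classes. I would fix the convention by testing it on $X=\PP^d$, where only $\mu_0=1$ is nonzero and $c_{\mathrm{Ma}}$ must be $h_1^{n-d}(1+h_1)^{d+1}$. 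This test pins down the signs, and by linearity in the $\mu_j$ it then fixes them for $L^{-1}$ as well.

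It remains to simplify. Put $s=-h_1/(1+h_1)$. Then
\[
-2s-1 = \frac{h_1-1}{1+h_1}, \qquad \frac{2s}{2s+1} = \frac{2h_1}{h_1-1}.
\]
Hence
\[
(1+h_1)^{d+1}P(s) = (1+h_1)^{d+1}\Big(\frac{h_1-1}{1+h_1}\Big)^d \overline{\chi}_M\Big(\frac{2h_1}{h_1-1}\Big) = (1+h_1)(h_1-1)^d\,\overline{\chi}_M\Big(\frac{2h_1}{h_1-1}\Big).
\]
This expression is a genuine polynomial in $h_1$, because $\overline{\chi}_M$ has degree $d$. Multiplying by $h_1^{n-d}$ gives exactly the claimed formula.

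The only real obstacle is the sign bookkeeping in Piene's formula. Everything else is the truncation $h_1^{n+1}=0$ followed by the binomial theorem.
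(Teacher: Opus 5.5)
\textbf{There is a genuine gap in the sign step, though it is easy to repair.}

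Your route differs from the paper's. The paper never inverts Piene's relation. Instead it substitutes $[\Con(L^{-1})]=h_1^{n-d}h_2(-2h_1-h_2)^d\,\overline{\chi}_M\bigl(\frac{2h_1}{2h_1+h_2}\bigr)$ into Aluffi's push-forward formula for $c_{\mathrm{Ma}}$ and extracts the coefficient of $h_2^n$. Working directly with polar degrees, the truncation $h_1^{n+1}=0$ and the binomial theorem is a perfectly good, more elementary alternative. Everything you do \emph{after} the identity
\[
c_{\mathrm{Ma}}(L^{-1})=h_1^{n-d}\sum_{j=0}^d \mu_j\,(-h_1)^j(1+h_1)^{d+1-j}
\]
is correct.

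The gap is this identity itself. You obtain its signs by requiring that it produce the target formula, which is circular, and the proposed check does not repair this. For $X=\PP^d$ all $\mu_j$ with $j\geq 1$ vanish. So if $\deg c^{\mathrm{M}}_k=\sum_{j\le k}a_{kj}\mu_j$ with universal coefficients, the test only determines $a_{k0}=\binom{d+1}{k}$. Linearity says nothing about the $a_{kj}$ with $j\ge 1$, and these are exactly the coefficients that matter for $L^{-1}$. In particular, the polar-class sign convention you mention is invisible to this test. The unsigned relation $c^{\mathrm{M}}_k=\sum_i\binom{d+1-k+i}{i}[H]^i\cap P_{k-i}$ (the correct relation for the classes $(-1)^jP_j$) passes it. But it would give $h_1^{n-d}(1+h_1)^{d+1}P\bigl(\frac{h_1}{1+h_1}\bigr)$; for the rational normal curve this yields $\deg c^{\mathrm{M}}_1=\mu_1+2\mu_0=4n-2$ instead of $2$.

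What you need is Piene's formula in the normalisation
\[
c^{\mathrm{M}}_k(X)=\sum_{i=0}^k(-1)^{k-i}\binom{d+1-k+i}{i}[H]^i\cap P_{k-i}(X),
\qquad\text{i.e.}\qquad
c^{\mathrm{M}}(X)=\sum_{j=0}^d(-1)^j\,(1+[H])^{d+1-j}\cap P_j(X).
\]
This can be justified from the description of $P_k$ as the push-forward of $c_k(\mathcal{P})$. Here $\mathcal{P}$ is the rank $d+1$ bundle of principal parts on the Nash blow-up, sitting in $0\to\widetilde{\Omega}\otimes\mathcal{O}(1)\to\mathcal{P}\to\mathcal{O}(1)\to 0$, with $\widetilde{\Omega}$ the dual of the Nash bundle. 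This gives $P_k=\sum_i(-1)^{k-i}\binom{d+1-k+i}{i}[H]^i\cap c^{\mathrm{M}}_{k-i}(X)$, and this relation is its own inverse.

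You cannot simply take the sign from the version displayed before the theorem. With $(-1)^{d-i}$ it gives $c^{\mathrm{M}}_0=(-1)^d[X]$, which already fails your $\PP^d$ test.

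With the $(-1)^{k-i}$ form, taking degrees gives $\deg c^{\mathrm{M}}_k=\sum_{j}(-1)^j\binom{d+1-j}{k-j}\mu_j$. Your truncation argument then produces the displayed identity, and the rest of your proof goes through. A sanity check that actually sees the higher polar degrees is the rational normal curve: $\mu_0=n$ and $\mu_1=2n-2$ give $c_{\mathrm{Ma}}=nh_1^{n-1}+2h_1^n$.
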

\begin{proof}
Equation 2.1 in \cite{Aluffi2017} states
\begin{equation}\label{eq: aluffi}
c_{\mathrm{Ma}}(X) = (-1)^{n-1+d}(1+h_1)^{n+1}  {\operatorname{pr}_1}_*\left(\frac{1}{1+h_1+h_2}  [\Con(X)]\right).
\end{equation}
for every projective variety $X\subseteq \PP^n$.
By performing the formal substitution $t = \frac{h_2}{h_1}$ in the combinatorial description from \Cref{thm:intro:polardegrees} we get
\[
[\Con(L^{-1})]=\sum_{i=0}^d\delta_i(L^{-1})h_1^{n-i}h_2^{i+1}=h_1^{n-d}h_2(-2h_1-h_2)^d\overline{\chi}_M\left(\frac{2h_1}{2h_1+h_2}\right)
\]
Write $\overline{\chi}_M(t)=\sum_{i=0}^da_it^i$, then
\[
\frac{1}{1+h_1+h_2}[\Con(L^{-1})]=(-1)^dh_1^{n-d}\sum_{i=0}^da_i(2h_1)^i\frac{(2h_1+h_2)^{d-i}}{1+h_1+h_2}.
\]
To use equation \eqref{eq: aluffi}, we need to extract the coefficient of $h_2^n$ as a polynomial in $h_1$ in the previous expression. This follows from the observation that the coefficient of $h_2^n$ in $\frac{(2h_1+h_2)^{d-i}}{1+h_1+h_2}$ is given by 
\begin{align*}
    \frac{(-1)^{n-1}}{(1+h_1)^n}(h_1-1)^{d-i}
\end{align*}
for all $i=0,\dots,d$. By equation \eqref{eq: aluffi} we conclude that \begin{align*}
    c_{\mathrm{Ma}}(X) &= (1+h_1)^{n+1} h_1^{n-d}\sum_{i=0}^da_i\frac{1}{(1+h_1)^n}(2h_1)^i(h_1-1)^{d-i} \\
    &=(1+h_1)h_1^{n-d}(h_1-1)^d\overline{\chi}_M\left(\frac{2h_1}{h_1-1}\right).
\end{align*}
\end{proof}

\begin{example}
If $L$ is a generic linear space of dimension $1$, meaning $\Matroid(L) = U_{2,n+1}$, then $X = L^{-1}$ is a smooth curve of degree $\mu^+(U_{2,n+1}) = n$, that is, a rational normal curve. In this case,
\[
c^{\mathrm{M}}(X) = [X] + 2[\text{pt}] \in \ChowGrp_\bullet(X), \qquad c_{\mathrm{Ma}}(X) = n[H]^{n-1} +2[H]^n \in \ChowGrp^\bullet(\PP^n).
\]
We can verify this basic fact using \Cref{thm:ChernMather}. Indeed using $\overline{\chi}_M(t)=t-n$ we get \begin{align*}
    c_{\mathrm{Ma}}(L^{-1}) &=(1+h_1)h_1^{n-1}(2h_1-n(h_1-1)) \\
    &=2h_1^n+nh_1^{n-1}
\end{align*}

\end{example}

\subsection{Powers of reciprocal linear spaces}

In this section, we give a generalization of \Cref{thm:intro:polardegrees} to higher powers of $L^{-1}$. Indeed, throughout \Cref{sec:triathlon}, we focused on reciprocal linear spaces, as they are the most classically studied. However, all main ideas are still applicable for any negative power of a linear space. More specifically, let $m \geq 1$ and consider the map
\begin{align*}
    \PP^n &\dashrightarrow \PP^n \\
    (x_0:\dots:x_n) &\mapsto (x_0^{-m}:\dots:x_n^{-m}).
\end{align*}
Let $L^{-m}$ be the closure of the image of $L$ under this map. We determine its polar degrees.

\begin{theorem}\label{thm: polardegswithk}
Let $L\subseteq \PP^n$ be a linear subspace of dimension $0\leq d<n$ and fix $m\geq 1$. Let $\mu_j(L^{-m})$ be the polar degrees of $L^{-m} \subseteq \PP^n$, indexed such that $\mu_0(L^{-m}) = \deg L^{-m}$. Let $M=M(L)$ be the matroid realized by $L$, then
    \[
    \sum_{j = 0}^d \mu_j(L^{-1})t^j = m^{1-c(M)}(-(m+1)t-m)^{d}\, \overline{\chi}_M\left(\frac{(m+1)t}{(m+1)t+m}\right)
    \]
Equivalently, setting $\delta_i(L^{-m}) \coloneq \mu_{d-i}(L^{-m})$ we have
\[
\sum_{i = 0}^d \delta_i(L^{-m})t^i = m^{1-c(M)}(-mt-m-1)^{d} \, \overline{\chi}_M\left(\frac{m+1}{mt+m+1}\right).
\]
In particular if $M$ is connected, then $\deg\, (L^{-m})^\vee = (m+1)^d\beta(M)$.
\end{theorem}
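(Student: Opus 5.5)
The plan is to rerun the three-step proof of \Cref{thm:intro:polardegrees} with the exponent $2$ replaced by $m+1$, keeping track of every power of $m$ that appears. (The first display in the statement should read $\mu_j(L^{-m})$.)

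\textbf{Step 1: parametrization.} Generalize \Cref{prop: parametrization}. The variety $L^{-m}$ is the image of $L$ under $x\mapsto x^{-m}$. At a torus point $p=x^{-m}$, the tangent space of $L^{-m}$ is $p\cdot(\text{tangent of }L\text{ at }x)$, up to the scalar $-m$. Hence a hyperplane $z$ is tangent at $p$ if and only if $z\,p$, after Hadamard-dividing by $x$, lies in $L^\perp$, that is, $z=x^{m+1}y$ with $y\in L^\perp$. So
\[
\psi_m\colon L\times L^\perp\dashrightarrow \Con(L^{-m}),\qquad (x,y)\mapsto (x^{-m},x^{m+1}y).
\]
This map is no longer birational in general: its degree is the generic fibre size of $x\mapsto x^{-m}$ on $L$. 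By the argument of \Cref{lem:degreeofsquaringCon}, together with $\deg L^m=m^{d-c(M)+1}\cdot(\text{const})$ from \cite{Dey2020}-type results, I expect this degree to be $m^{c(M)-1}$. That is the natural source of the prefactor $m^{1-c(M)}$.

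\textbf{Step 2: reduction to the map $\phi$.} I would use the diagram \eqref{eq:diagramforsquaring}, but with the two vertical maps now being the $(m+1)$-st power maps $x\mapsto x^{m+1}$ in the first factor, composed with the $(m+1)/m$ rescaling of $h_1$. The cleanest route is probably the following. Consider $g\colon(x,y)\mapsto(x^{m+1},y)$ on the source and $g'\colon(u,z)\mapsto(u^{-(m+1)/m}\ldots)$; since that is not a morphism, instead compare both $\Con(L^{-m})$ and $\overline{\phi(L\times L^\perp)}$ to the common image of $L\times L^\perp$ under
\[
(x,y)\mapsto (x^{-m(m+1)},\,x^{m+1}y).
\]
This image is reached from $\Con(L^{-m})$ by the $(m+1)$-st power map in the first factor, and from $\overline{\phi(L^{m+1}\times L^\perp)}$ by the $m$-th power map in the first factor. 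Push-pull then gives the following relation:
\[
(m+1)^k\,\deg(\text{pow}_{m+1}|_{\Con})\int[\Con(L^{-m})]h_1^kh_2^{n-k-1}
= m^k\,\deg(\text{pow}_m|_{\ldots})\int[\overline{\phi(L^{m+1}\times L^\perp)}]h_1^kh_2^{n-k-1}.
\]
The tropical weight argument of \Cref{prop:gettingridof2} then replaces $L^{m+1}$ by $L$ at the cost of $(m+1)^{d-c(M)+1}$, and removes the corresponding covering degrees $(m+1)^{c(M)-1}$. The net result I expect is
\[
\delta_k(L^{-m})=m^{1-c(M)}\,m^{k}(m+1)^{d-k}\int_{\Sigma_{M,M^\perp}}\widetilde\gamma^k\delta^{n-k-1}.
\]
Bookkeeping all these degrees, in particular the $m^{c(M)-1}$ factors coming from degrees of power maps on $L$ via $\deg L^m=m^{d-c(M)+1}$, is the main obstacle. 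I would sanity-check the result against $m=1$, where it must recover \Cref{prop:gettingridof2}, and against $M=U_{2,3}$, where it can be computed directly.

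\textbf{Step 3: combinatorics.} By \Cref{intersectionNumberToFlagBeta} and \Cref{lem: polynomialFormula},
\[
\sum_k \frac{\delta_k(L^{-m})\,t^k}{m^{1-c(M)}(m+1)^{d}}=P_M\!\left(\frac{mt}{m+1}\right).
\]
Substituting $\tfrac{mt}{m+1}$ into $(-t-1)^d\overline\chi_M(1/(t+1))$ and clearing denominators gives $(-mt-m-1)^d\,\overline\chi_M\!\left(\frac{m+1}{mt+m+1}\right)$. Reversing the coefficients via $\mu_j=\delta_{d-j}$ gives the first formula. The coloop reduction goes through exactly as before via \Cref{lem:convolution_of_polardegs}; since $c(M)$ drops appropriately under contracting a coloop, the bookkeeping of $c(M)$ there needs a short check. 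Setting $t=0$ with $M$ connected gives $\deg(L^{-m})^\vee=(m+1)^d\beta(M)$.
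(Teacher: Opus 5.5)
Your architecture is the paper's: the map $\psi_m$ of degree $m^{c(M)-1}$, the reduction to $\int_{\Sigma_{M,M^\perp}}\widetilde{\gamma}^k\delta^{n-k-1}$, and then \Cref{lem: polynomialFormula} at $mt/(m+1)$. The identity you aim for, $\delta_k(L^{-m})=(m+1)^{d-k}m^{k+1-c(M)}\int_{\Sigma_{M,M^\perp}}\widetilde{\gamma}^k\delta^{n-k-1}$, is exactly \Cref{prop: gettingridofthek}.

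Your Step 2 uses a different comparison from the paper. The paper powers both factors ($g_1=(x^{m(m+1)},y^m)$, $g_2=(x^{m+1},y^m)$), so it needs the weights of both $\trop(L^{m(m+1)})$ and $\trop((L^\perp)^m)$. Your common image $W$, reached by first-factor power maps, only needs $\trop(L^{m+1})$. That is a legitimate simplification.

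However, Step 2 as written does not establish the identity. Let $a$ be the degree of $\mathrm{pow}_{m+1}\times\mathrm{id}$ on $\Con(L^{-m})$, and $b$ the degree of $\mathrm{pow}_m\times\mathrm{id}$ on $\Phi=\overline{\phi(L^{m+1}\times L^\perp)}$. The projection formula gives $(m+1)^k\int[\Con(L^{-m})]h_1^kh_2^{n-k-1}=a\int[W]\widetilde{h_1}^k\widetilde{h_2}^{n-k-1}$ and $m^k\int[\Phi]h_1^kh_2^{n-k-1}=b\int[W]\widetilde{h_1}^k\widetilde{h_2}^{n-k-1}$. Hence $b\,(m+1)^k\int[\Con(L^{-m})]\cdots=a\,m^k\int[\Phi]\cdots$, whereas your display has $a$ and $b$ interchanged.

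This is invisible for connected $M$, where $a=b=1$, but not when $c(M)>1$. The actual values are $a=(m+1)^{c(M)-1}$ and $b=m^{c(M)-1}$. Take $\widehat L=\{(\lambda,\lambda,\nu,\nu)\}\subseteq\KK^4$, where $L^{-m}=L$ is a line. Your relation then gives $\delta_1(L^{-m})=m^2/(m+1)^2$ instead of $1$. More to the point, you never compute $a$ and $b$, yet they are exactly the source of the factor $m^{1-c(M)}$, the only new feature compared with $m=1$. Writing down the expected net result is not a derivation of it.

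The missing computation is short, and only the ratio $a/b$ matters. The map $\Theta(x,y)=(x^{-m(m+1)},x^{m+1}y)$ on $L\times L^\perp$ factors both as $(\mathrm{pow}_{m+1}\times\mathrm{id})\circ\psi_m$ and as $(\mathrm{pow}_m\times\mathrm{id})\circ\phi\circ(\mathrm{pow}_{m+1}\times\mathrm{id})$. Three facts let you compare degrees: $\phi$ is a torus automorphism, $\deg\psi_m=m^{c(M)-1}$, and $\deg(\mathrm{pow}_{m+1}|_L)=(m+1)^{c(M)-1}$ (as in \Cref{lem:degreeofsquaringCon}). Comparing the two factorizations gives $m^{c(M)-1}a=(m+1)^{c(M)-1}b$. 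Since $\trop(\phi)$ is a lattice automorphism, your weight factor gives $\int[\Phi]\cdots=(m+1)^{d-c(M)+1}\int[\overline{\phi(L\times L^\perp)}]\cdots$. Altogether
\begin{align*}
\int[\Con(L^{-m})]\,h_1^kh_2^{n-k-1}
&=\frac{a}{b}\cdot\frac{m^k}{(m+1)^k}\cdot(m+1)^{d-c(M)+1}\int[\overline{\phi(L\times L^\perp)}]\,h_1^kh_2^{n-k-1}\\
&=(m+1)^{d-k}m^{k+1-c(M)}\int[\overline{\phi(L\times L^\perp)}]\,h_1^kh_2^{n-k-1},
\end{align*}
which is your target.

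Step 3 is then fine, and the coloop check you postponed works out. Write $s=\frac{(m+1)t}{(m+1)t+m}$. Then $c(M/e)=c(M)-1$, and the factor $s-1=-m/((m+1)t+m)$ in $\overline{\chi}_M(s)=(s-1)\overline{\chi}_{M/e}(s)$ supplies exactly the extra power of $m$.

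One minor slip: the tangent space to the cone over $L^{-m}$ at $p=x^{-m}$ is $x^{-m-1}\widehat L$, not $p\,\widehat L$. Your tangency criterion $z=x^{m+1}y$ is nevertheless correct.
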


In the above expressions, when $M$ has several connected components, $m$ appears with a negative power. It may thus seem unclear why the above polynomial has integer coefficients. However, if $M$ decomposes into $c(M)$ connected components labeled
$M_1, \dots, M_{c(M)}$, then
\[
\overline{\chi}_M(s) = \frac{\chi_M(s)}{s-1} = \frac{\chi_{M_1}(s)\ldots \chi_{M_{c(M)}}(s)}{s-1} = (s-1)^{c(M)-1}\overline{\chi}_{M_1}(s) \dots \overline{\chi}_{M_{c(M)}}(s)\in \ZZ[s].
\]
Therefore the right hand side of the first equality of \Cref{thm: polardegswithk} rewrites as
    \[
 (-(m+1)t-m)^{d+1-c(M)}\overline{\chi}_{M_1}\left(\frac{(m+1)t}{(m+1)t+m}\right) \ldots \overline{\chi}_{M_{c(M)}}\left(\frac{(m+1)t}{(m+1)t+m}\right)
    \]
which clearly has integer coefficients. Analogously we get 
\begin{align*}
    &\sum_{i= 0}^d\delta_i(L^{-m})t^i\\&=t^{c(M)-1}(-mt-m-1)^{d+1-c(M)} \overline{\chi}_{M_1}\left(\frac{m+1}{(m+1)t+m}\right) \ldots \overline{\chi}_{M_{c(M)}}\left(\frac{m+1}{(m+1)t+m}\right).   
\end{align*}
As the right hand side is divisible by $t^{c(M)-1}$, we can see that the top $c(M)-1$ polar degrees of $L^{-m}$ vanish. In particular $L^{-m}$ is dual defective whenever $c(M)>1$, i.e. $M$ is disconnected.

As in the proof of our main result, \Cref{thm:intro:polardegrees}, the strategy is to find a parametrization of the conormal variety $\Con(L^{-m})$ and reduce it to the a computation of the intersection numbers
\[
\int_{\Sigma_{M,M^\perp}}\widetilde{\gamma}^k\delta^{n-k-1}.
\]
Relative to the proof of \Cref{thm:intro:polardegrees}, to prove \Cref{thm: polardegswithk} we only need to adapt the proofs of \Cref{prop: parametrization} and \Cref{prop:gettingridof2}. The rest of the argument is identical.

As in \Cref{sec:triathlon}, from now on until the proof of \Cref{thm: polardegswithk}, we will  suppose that $M$ is coloopless. 

\begin{proposition}
    \label{prop: paramerizationwithk}
The rational map
\begin{align*}
\psi_m \colon L \times L^\perp &\;\dashrightarrow \, \PP^n \times (\PP^n)^* \\
 (x, y) &\longmapsto  (x^{-m}, x^{m+1}y)
\end{align*}
rationally parametrizes $\Con(L^{-m})$. Its degree is $\deg(\psi_m) = m^{c(M)-1}$.
\end{proposition}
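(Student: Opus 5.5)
We follow the proof of \Cref{prop: parametrization}. The first step is to compute tangent spaces. Take a point $p\in L^{-m}\cap\TT^n$ and write $p=x^{-m}$ with $x\in L\cap\TT^n$. The monomial map $c_m\colon x\mapsto x^{-m}$ restricts to the torus as a homomorphism, so at $x$ its differential is the diagonal linear map $v\mapsto -m\,x^{-m-1}v$ (coordinatewise, in affine cone coordinates). Hence the affine tangent space satisfies
\[
\widehat{T}_p(L^{-m}) = x^{-m-1}\,\widehat{L}.
\]
The factor $-m$ is irrelevant here, and we also use that $x^{-m}\in x^{-m-1}\widehat{L}$ because $x\in\widehat L$. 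A hyperplane $z$ contains this space if and only if $\sum_i z_i x_i^{-m-1}v_i=0$ for all $v\in\widehat L$. This holds if and only if $x^{-m-1}z\in \widehat{L}^\perp$, that is, $z=x^{m+1}y$ for some $y\in L^\perp$. Therefore the image of $\psi_m$ contains the dense open subset of $\Con(L^{-m})$ lying over $L^{-m}\cap\TT^n$. Both varieties are irreducible of dimension $n-1$, since $\dim L\times L^\perp = d+(n-d-1)$. So $\overline{\Im\psi_m}=\Con(L^{-m})$ and $\psi_m$ is generically finite.

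The second step computes the degree. Fix a general point $(x^{-m},x^{m+1}y)$ in the image. Its fibre consists of pairs $(x',y')$ with $x'^{-m}=x^{-m}$ and $x'^{m+1}y'=x^{m+1}y$. Once $x'$ is fixed, $y'$ is uniquely determined as $y'=x^{m+1}x'^{-m-1}y$, up to the projective scalar, and we must still check that $y'\in L^\perp$. This check is automatic: the tangent hyperplane condition above, applied with $x'$ in place of $x$, shows that $x'^{-m-1}z\in\widehat L^\perp$ whenever $z$ is tangent at $p=x'^{-m}$. So the degree of $\psi_m$ equals the degree of the restriction $c_m|_L\colon L\dashrightarrow L^{-m}$. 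The inversion is an automorphism of the torus, so this is the same as $\deg(h_m|_L)$, where $h_m$ is the coordinatewise $m$-th power map.

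It remains to show $\deg(h_m|_L)=m^{c(M)-1}$, and here we imitate \Cref{lem:degreeofsquaringCon}. By the push-pull formula, $\deg(L^m)\cdot\deg(h_m|_L)=m^d$. We then need $\deg(L^m)=m^{d-c(M)+1}$, which is the $m$-th power version of \cite[Theorem 2.6]{Dey2020}; I expect the cited result to cover general powers. If it does not, we argue directly as follows. Two points $x,x'\in L\cap\TT^n$ have the same image exactly when $x'=\zeta x$ for some $\zeta\in\mu_m^{E}$ modulo a global scalar. For general $x$, we have $\zeta x\in\widehat L$ only if $\zeta$ is constant on each connected component of $M$. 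The components contribute the independent rescalings $\widehat L=\bigoplus_C\widehat L_C$, so the fibre has $m^{c(M)}/m$ elements.

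This last claim is the main obstacle. The point is to show that if $\zeta x\in\widehat L$ for general $x$, then $\zeta$ is constant on circuits. Take a circuit $C$ with its unique linear relation $\sum_{i\in C} a_ix_i=0$ on $\widehat L$. Then $\zeta x\in\widehat L$ gives $\sum_{i\in C} a_i\zeta_ix_i=0$ as well. For general $x$, the vectors $(x_i)_{i\in C}$ span the one-dimensional projection, so $\zeta$ is constant on $C$. Since elements lying in a common circuit generate the connectivity relation, $\zeta$ is constant on each component, and the count $m^{c(M)-1}$ follows.
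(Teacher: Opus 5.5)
Your proposal is correct and follows essentially the paper's route. The tangent-space computation is the $-m$ version of the argument behind \Cref{prop: parametrization}, and the degree is reduced to $\deg(h_m|_L)=m^{c(M)-1}$ via push-pull and \cite{Dey2020}, as in \Cref{lem:degreeofsquaringCon}. Your fibre-by-fibre comparison of $\psi_m$ with $c_m|_L$ replaces the paper's composition with the birational map $\phi$, and it makes explicit why the second coordinate causes no trouble (note that $\phi\circ\psi_m(x,y)=(x^m,xy)$); your circuit argument is a self-contained substitute for the citation.

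One small repair: the identity $\widehat T_p(L^{-m})=x^{-m-1}\widehat L$ holds only at smooth points $p$. For $m\ge 2$ the torus part of $L^{-m}$ can be singular, since two preimages $x,\zeta x\in L$ with $\zeta\widehat L\neq\widehat L$ give two branches there. So take $p$ general from the outset, as you already do in the degree step.
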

\begin{proof}
To prove that $\psi_m$ parametrizes $\Con(L^{-m})$ we apply the same argument as in \Cref{prop: parametrization}, and replace $-1$ with $-m$ in the argument of the proof of \cite[Proposition 4.1]{MatsubaraHeoTelen2026}.

To compute the degree, we note that $\phi\circ \psi_m$ is the $m$-th power map in the first factor and identity in the second factor. This map has degree $m^{c(M)-1}$, which can be seen with the same argument as in the proof of \Cref{lem:degreeofsquaringCon}. Since $\phi$ is clearly birational on the image of $\psi_m$ we conclude.
\end{proof}

We now apply the same strategy as in \Cref{prop:gettingridof2} to get rid of the exponents in the $x$ variables.
\begin{proposition}\label{prop: gettingridofthek}
        For $k=0,\ldots,d$ we have \begin{align*}
            \int_{\PP^n \times (\PP^n)^*}[\Con(L^{-m})]h_1^kh_2^{n-k-1}=(m+1)^{d-k}m^{k+1-c(M)}\int_{\PP^n \times (\PP^n)^*}[\overline{\phi(L\times L^\perp)}] h_1^kh_2^{n-k-1}
    \end{align*}
\end{proposition}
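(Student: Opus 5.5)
The plan is to mirror the proof of \Cref{prop:gettingridof2}, replacing the squaring map by the $(m+1)$-st power map in the first factor. Concretely, I would consider the commutative diagram with bottom row $\psi_m\colon L\times L^\perp\dashrightarrow \Con(L^{-m})$, middle row $(x,y)\mapsto (x^{-m},x^{m+1}y)$ on $\PP^n\times(\PP^n)^*$, and top row $\phi$. The vertical maps are $f_1(x,y)=(x^{m+1},y)$ on the left and $f_2(x,y)=(x^{-(m+1)/m}\cdots)$ on the right. Since we cannot take fractional powers, I would instead choose $f_2(x,y)=(x^{m+1},y^m)$... The cleanest choice is to look for monomial maps $f_1,f_2$ with $\phi\circ f_1=f_2\circ(\text{middle map})$. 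Taking $f_1(x,y)=(x^{m+1},y)$ and $f_2(x,y)=(x^{m+1},y)$ does not work, since $\phi(x^{m+1},y)=(x^{-m-1},x^{m+1}y)$ while $f_2(x^{-m},x^{m+1}y)=(x^{-m(m+1)},x^{m+1}y)$. So I would take $f_1(x,y)=(x^{m+1},y)$ with $f_2(u,v)=(u^{(m+1)/m}\ldots)$ replaced by $f_2(u,v)=(u^{m+1},v)$ and top map $\phi_m(x,y)=(x^{-m},xy)$. Then $\phi_m$ and $\phi$ differ by the $m$-th power map $g(u,v)=(u^m,v)$ in the first target factor.

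With this setup, the push--pull bookkeeping runs as follows. Since $f_2^*\widetilde h_1=(m+1)h_1$ and $f_2^*\widetilde h_2=h_2$, we get
\[
\int[\Con(L^{-m})]h_1^kh_2^{n-k-1}=(m+1)^{-k}\deg(f_2|_{\Con})\int[\overline{\phi_m(f_1(L\times L^\perp))}]\,\widetilde h_1^k\widetilde h_2^{n-k-1}.
\]
Writing $\phi_m=g\circ\phi$ and pulling back along $g$ (where $g^*\widetilde h_1=m\,h_1$) produces a factor $m^k/\deg(g|_{\overline{\phi(f_1(L\times L^\perp))}})$. As in \Cref{prop:gettingridof2}, the tropical weight rescaling via \cite[Theorem 3.12]{sturmfels2007tropical}, \cite[Lemma 4.9]{MatsubaraHeoTelen2026} and \cite[Theorem 6.7.9]{Maclagan2015} replaces $f_1(L\times L^\perp)=L^{m+1}\times L^\perp$ by $L\times L^\perp$. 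This costs a factor $(m+1)^{d-c(M)+1}$, using $\deg L^{m+1}=(m+1)^{d-c(M)+1}$ from \cite[Theorem 2.6]{Dey2020}.

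The remaining step is to determine the map degrees. By commutativity, $\deg(f_2|_{\Con})\cdot\deg\psi_m=\deg(f_1|_{L\times L^\perp})\cdot\deg(g\circ\phi|_{L^{m+1}\times L^\perp})$. Here $\deg(f_1|_{L\times L^\perp})=(m+1)^{c(M)-1}$ by the argument of \Cref{lem:degreeofsquaringCon}, $\deg\psi_m=m^{c(M)-1}$ by \Cref{prop: paramerizationwithk}, and $\phi$ is birational. The degree of $g$ restricted to the image is the degree of the $m$-th power map on $L^{-(m+1)}$, which is $m^{c(M)-1}$ by the same lemma applied to the torus-invariant structure.

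Assembling the factors gives
\[
(m+1)^{-k}\cdot(m+1)^{c(M)-1}\cdot(m+1)^{d-c(M)+1}\cdot m^{k}\cdot m^{1-c(M)}\cdot\frac{\deg(\text{everything})}{\ldots},
\]
which should collapse to $(m+1)^{d-k}m^{k+1-c(M)}$. The main obstacle is keeping these degree factors straight, in particular the interplay between $\deg\psi_m$ and $\deg g$. I would verify the final constant against the uniform case and against $m=1$, where it must reduce to $2^{d-k}$ as in \Cref{prop:gettingridof2}.
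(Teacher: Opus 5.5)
Your argument is correct, but you organize the bookkeeping differently from the paper.

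\textbf{The paper's route.} The paper keeps $\phi$ as the top map and takes $g_1(x,y)=(x^{m(m+1)},y^m)$ and $g_2(x,y)=(x^{m+1},y^m)$, so a single push--pull along $g_2$ suffices. The price is that the second factor is powered as well. The paper therefore needs $\deg(g_2|_{\Con(L^{-m})})=(m(m+1))^{c(M)-1}$, which involves the $m$-th power map on $L^\perp$ and uses $c(M^\perp)=c(M)$. It also needs the weight rescaling of $\trop(L^{m(m+1)}\times(L^\perp)^m)$ by $(m+1)^{d-c(M)+1}m^{n-2c(M)+1}$. These powers of $m$ then have to cancel against the $m^{-(n-k-1)}$ coming from $g_2^*\widetilde{h_2}=m\,h_2$.

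\textbf{Your route.} Your diagram has $f_1(x,y)=(x^{m+1},y)$, $f_2(u,v)=(u^{m+1},v)$, and top map $\phi_m=g\circ\phi$ with $g(u,v)=(u^m,v)$; both composites equal $(x^{-m(m+1)},x^{m+1}y)$. It never powers $L^\perp$ and only needs the weight rescaling for $\trop(L^{m+1})$. You trade this for a second push--pull along the finite morphism $g$. Both routes rest on the same external inputs: $\deg\psi_m=m^{c(M)-1}$, birationality of $\phi$, and the uniform weight scaling coming from Sturmfels--Tevelev and \cite{Dey2020}.

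\textbf{The final assembly needs cleaning up.} Put $Z\coloneq\overline{\phi(L^{m+1}\times L^\perp)}$. Commutativity gives $\deg(f_2|_{\Con(L^{-m})})\cdot m^{c(M)-1}=(m+1)^{c(M)-1}\deg(g|_Z)$. Hence
\begin{align*}
\int[\Con(L^{-m})]h_1^kh_2^{n-k-1}
&=(m+1)^{-k}\,\frac{(m+1)^{c(M)-1}\deg(g|_Z)}{m^{c(M)-1}}\,\frac{m^k}{\deg(g|_Z)}\,(m+1)^{d-c(M)+1}\int[\overline{\phi(L\times L^\perp)}]h_1^kh_2^{n-k-1}\\
&=(m+1)^{d-k}m^{k+1-c(M)}\int[\overline{\phi(L\times L^\perp)}]h_1^kh_2^{n-k-1}.
\end{align*}
So $\deg(g|_Z)$ cancels identically, and your unexplained factor ``$\deg(\text{everything})/\ldots$'' is just $1$. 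You do not need to compute $\deg(g|_Z)$ at all. Its value $m^{c(M)-1}$ is right, but identifying it with the degree of the $m$-th power map on $L^{-(m+1)}$ would need its own argument, since $Z$ is not a product. You should also delete the false starts in your first paragraph and state the diagram directly.
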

\begin{proof}
        Consider the following commutative diagram:
    \begin{equation}
      \label{eq:diagramfork}
      \begin{tikzcd}
          \PP^n \times (\PP^n)^* \arrow[rr,dashed,"\phi"] & & \PP^n \times (\PP^n)^* \\
        \PP^n \times (\PP^n)^* \arrow[u,"g_1"]\arrow[rr,dashed, "{(x^{-m},x^{m+1}y)}"] & & \PP^n \times (\PP^n)^* \arrow[u,"g_2"]\\
        L\times L^\perp \arrow[rr,dashed,"\psi_m"] \arrow[u,"\subseteq",phantom,sloped] & &\Con(L^{-m}) \arrow[u,"\subseteq",phantom,sloped]
      \end{tikzcd}
  \end{equation}
where $g_1(x,y) = (x^{m(m+1)},y^m)$ and $g_2(x,y) = (x^{m+1},y^m)$.

Let $\widetilde{h_1},\widetilde{h_2}$ be the two hyperplane classes in the upper right $\PP^n \times (\PP^n)^*$ and $h_1,h_2$ be the ones on the lower right $\PP^n \times (\PP^n)^*$. Using this notation we clearly have $g_2^*\widetilde{h_1}=(m+1)h_1, g_2^*\widetilde{h_2}=mh_2$. By the push-pull formula we get \begin{align*}
      &\int_{\PP^n \times (\PP^n)^*}[\Con(L^{-m})]h_1^kh_2^{n-k-1}\\&=\int_{\PP^n \times (\PP^n)^*}[\Con(L^{-m})]\left(\frac{1}{m+1}g_2^*\widetilde{h_1}\right)^k\left(\frac{1}{m}g_2^*\widetilde{h_2}\right)^{n-k-1}\\&=(m+1)^{-k}m^{-n+k+1}\int_{\PP^n \times (\PP^n)^*}(g_2)_*[\Con(L^{-m})]\widetilde{h_1}^k\widetilde{h_2}^{n-k-1} \\
      &=(m+1)^{-k}m^{-n+k+1}\deg(g_2|_{\Con(L^{-m})})\int_{\PP^n \times (\PP^n)^*}[\overline{\phi(g_1(L\times L^\perp))}]\widetilde{h_1}^k\widetilde{h_2}^{n-k-1}
  \end{align*}
  
Now, $\deg(g_2|_{\Con(L^{-m})})=(m(m+1))^{c(M)-1}$. The proof of this is analogous to that of \Cref{lem:degreeofsquaringCon}.

 By \Cref{lem: tropLinSpaces} we have set-theoretically
\begin{align*}
      \trop(g_1(L\times L^\perp)) =\trop(L\times L^\perp). 
  \end{align*}
  Using the same argument as as in \cite[Lemma 4.9]{MatsubaraHeoTelen2026}, the weight of every maximal cone in $\trop(g_1(L\times L^\perp))$ is 
  \[
(m(m+1))^{d-c(M)+1}m^{n-d-1-c(M^\perp)+1}=(m+1)^{d-c(M)+1}m^{n-2c(M)+1}.
\]
Analogously to the proof of \Cref{prop: redStep1}, combining \cite[Theorem 6.7.9]{Maclagan2015} and \cite[Theorem 3.12]{sturmfels2007tropical}, we conclude 
  \begin{align*}
     &\int_{\PP^n \times (\PP^n)^*}[\overline{\phi(g_1(L\times L^\perp))}]\widetilde{h_1}^k\widetilde{h_2}^{n-k-1} \\&= (m+1)^{d-c(M)+1}m^{n-2c(M)+1}\int_{\PP^n \times (\PP^n)^*}[\overline{\phi(L\times L^\perp)}]\widetilde{h_1}^k\widetilde{h_2}^{n-k-1}.
  \end{align*}
\end{proof}

We can now use the results derived in \Cref{sec:triathlon} to give a proof of \Cref{thm: polardegswithk}.

\begin{proof}[Proof of \Cref{thm: polardegswithk}]

We first reduce to the coloopless case using the same argument as in the proof of \Cref{thm:intro:polardegrees}.

    Using \Cref{prop: gettingridofthek},  \Cref{intersectionNumberToFlagBeta} and \Cref{lem: formulaforqM}, we have
    \[
    \sum_{i =0}^d \frac{\delta_i(L^{-m})}{(m+1)^{d-i}m^{i-c(M)+1}}(L^{-1})t^i =
    (-t-1)^{d}\overline{\chi}_M\big(\frac{1}{t+1}\big)
    \]
    By substituting $t$ with $\frac{m}{m+1}t$ and multiplying by $(m+1)^dm^{-c(M)+1}$, we obtain the desired formula.
\end{proof}

\printbibliography
\end{document}